\newtheorem{theorem}{Theorem}[section]
\newtheorem{proposition}[theorem]{Proposition}
\theoremstyle{definition}
\newtheorem{definition}[theorem]{Definition}
\newtheorem{remark}[theorem]{Remark}
\newtheorem{question}[theorem]{Question}
\newtheorem{example}[theorem]{Example}
\theoremstyle{remark}
\numberwithin{equation}{section}
\begin{document}

\title[Smooth geometry of double regular algebras of type (14641)]{Smooth geometry of double extension \\ regular algebras of type (14641)}


\author{Andr\'es Rubiano}
\address{Universidad Nacional de Colombia - Sede Bogot\'a}
\curraddr{Campus Universitario}
\email{arubianos@unal.edu.co}
\thanks{}


\author{Armando Reyes}
\address{Universidad Nacional de Colombia - Sede Bogot\'a}
\curraddr{Campus Universitario}
\email{mareyesv@unal.edu.co}

\thanks{This work was supported by Faculty of Science, Universidad Nacional de Colombia - Sede Bogot\'a, Colombia [grant number 53880].}

\subjclass[2020]{16E45, 16S36, 16S37, 16S38, 16W20, 16W50, 58B34}

\keywords{Differentially smooth algebra, integrable calculus, Ore extension, double extension, Artin-Schelter regular}

\date{}

\dedicatory{Dedicated to Professor Oswaldo Lezama on the Occasion of His 68th Birthday}

\begin{abstract} 

In this paper, we prove that double extension regular algebras of type (14641) are not differentially smooth. 

\end{abstract}

\maketitle


\section{Introduction}

Ore \cite{Ore1931, Ore1933} introduced a kind of noncommutative polynomial rings which has become one of most basic and useful constructions in ring theory and noncommutative algebra. For an associative and unital ring $R$, an endomorphism $\sigma$ of $R$ and a $\sigma$-derivation $\delta$ of $R$, the {\em Ore extension} or {\em skew polynomial ring} of $R$ is obtained by adding a single generator $x$ to $R$ subject to the relation $xr = \sigma(r) x + \delta(r)$ for all $r\in R$. This Ore extension of $R$ is denoted by $R[x; \sigma, \delta]$. As one can appreciate in the literature, a lot of papers and books have been published concerning ring-theoretical, homological, geometrical properties and applications of these extensions (e.g. \cite{BrownGoodearl2002, BuesoTorrecillasVerschoren2003, Fajardoetal2020, Fajardoetal2024, GoodearlLetzter1994, GoodearlWarfield2004, McConnellRobson2001, Li2002, Rosenberg1995, SeilerBook2010} and references therein).

On the other hand, {\em Artin-Schelter regular algebras} defined by Artin and Schelter \cite{ArtinSchelter1987} are considered as noncommutative analogues of commutative polynomial rings due to its important role in noncommutative geometry. Since its introduction, there has been extensive research on the topic (see the excellent treatments carried out by Bellami et al. \cite{Bellamyetal2016} and Rogalski \cite{Rogalski2023}). Precisely, the classification of noncommutative projective 3-spaces or quantum $\mathbb{P}^3$s corresponds to the classification of Artin-Schelter regular algebras of global dimension four. With the aim of presenting new examples of Artin-Schelter regular algebras generated by four elements of degree one (Artin-Schelter regular algebras of dimension one and two that are generated by elements of degree one are well-known \cite[Examples 1.8 and 1.9]{Rogalski2023}, while of dimension three have been classified by Artin, Schelter, Tate and Van den Bergh \cite{ArtinSchelter1987, ArtinTateVandenBergh2007, ArtinTateVandenBergh1991}, c.f. \cite{Stephenson1996}), Zhang and Zhang \cite{ZhangZhang2008, ZhangZhang2009} introduced algebra extensions which they called {\em double Ore extensions} (or {\em double extensions} for short) as a natural generalization of the Ore extensions. In fact, from the definition of double extensions it is possible to appreciate some similarities to that of a two-step iterated Ore extensions. Nevertheless, there are no inclusions between the classes of all double extensions of an algebra and of all length two iterated Ore extensions of the same algebra \cite[Example 4.2 and Proposition 0.5(c)]{ZhangZhang2008}; Carvalho et al. \cite[Theorems 2.2 and 2.4]{Carvalhoetal2011} formulated necessary and sufficient conditions for a double extension to be presented as two-step iterated Ore extensions. 

As Zhang and Zhang asserted, rather than Ore extensions very few properties are known to be preserved under double extensions since some techniques used for Ore extensions are invalid for double extensions \cite[Section 4]{ZhangZhang2008}: \textquotedblleft double extensions seem much more difficult to study than Ore extensions [...] Many new and complicated constraints have to be posted in constructing a double extension. General ring-theoretic properties of double extensions are not known\textquotedblright\ \cite[Section 0]{ZhangZhang2008}. Some authors have contributed to the research on double extensions and its relations with algebraic structures such as Poisson, Hopf, Koszul and Calabi-Yau algebras (e.g. \cite{Li2022, LouOhWang2020, LuOhWangYu2018, LuWangZhuang2015, RamirezReyes2024, SuarezLezamaReyes2017, ZhuVanOystaeyenZhang2017}).

As we said above, in their paper \cite{ZhangZhang2009} Zhang and Zhang considered regular algebras $B$ of dimension four that are generated in degree one. By Lu et al. \cite{LuPalmieriWyZhang2007}, it is known that $B$ is generated by either two, or three, or four elements and the projective resolution of the trivial module $\Bbbk_B$ is given in \cite[Proposition 1.4]{LuPalmieriWyZhang2007} ($\Bbbk$ is an algebraically closed field). In the case that $B$ is generated by four elements, the projective resolution of the trivial module $\Bbbk_B$ is of the form
\begin{equation}\label{LISTResolution}
0 \xrightarrow{} B(-4) \xrightarrow{} B(-3)^{\oplus 4} \xrightarrow{} B(-2)^{\oplus 6} \to B(-1)^{\oplus 4} \to B \xrightarrow{} \Bbbk_B \xrightarrow{} 0.
\end{equation}

Due to the form of this resolution, Zhang and Zhang said that such an algebra is {\em of type} (14641). They classified all double extensions $R_P[y_1, y_2;\sigma]$ (with $\delta = 0$ and $\tau = (0, 0, 0)$) of type $(14641)$. Having in mind that Ore extensions and normal extensions of regular algebras of dimension three were studied by Le Bruyn et al. \cite{LeBruynSmithVandenBergh1996}, they omitted some of these from their classification, and hence their \textquotedblleft partial\textquotedblright\ classification consists of 26 families of regular algebras of type (14641) which are labeled by $\mathbb{A}, \mathbb{B}, \dotsc, \mathbb{Z}$. $\mathcal{LIST}$ denotes the class of all algebras in the families from $\mathbb{A}$ to $\mathbb{Z}$. In this class many of the double extensions $R_P[y_1, y_2;\sigma]$ are still Ore extensions; however, there might be non-zero $\delta$ and $\tau$ such that $R_P[y_1, y_2;\sigma]$ (with the same $(P, \sigma)$) is not an Ore extension. Zhang and Zhang's classification is basically the classification of $(P,\sigma)$ so that $R_P[y_1, y_2;\sigma, \delta, \tau]$ is not an Ore extension for possible $(\delta, \tau)$ \cite[p. 374]{ZhangZhang2009}. All details about this terminology and notions are given in Section \ref{DefinitionsandpreliminariesDOE}. 

Of interest in this paper, we refer to the notion of {\em differential smoothness}. Briefly, the study of {\em smoothness} of algebras goes back at least to Grothendieck's EGA \cite{Grothendieck1964}. The concept of a {\em formally smooth commutative} ({\em topological}) {\em algebra} introduced by him was extended to the noncommutative setting by Schelter \cite{Schelter1986}. An algebra is {\em formally smooth} if and only if the kernel of the multiplication map is projective as a bimodule. This notion arose as a replacement of a far too general definition based on the finiteness of the global dimension; Cuntz and Quillen \cite{CuntzQuillen1995} called these algebras {\em quasi-free}. Precisely, the notion of smoothness based on the finiteness of this dimension was refined by Stafford and Zhang \cite{StaffordZhang1994}, where a Noetherian algebra is said to be {\em smooth} provided that it has a finite global dimension equal to the homological dimension of all its simple modules. In the homological setting, Van den Bergh \cite{VandenBergh1998} called an algebra {\em homologically smooth} if it admits a finite resolution by finitely generated projective bimodules. The characterization of this kind of smoothness for the noncommutative pillow, the quantum teardrops, and quantum homogeneous spaces was made by Brzezi{\'n}ski \cite{Brzezinski2008, Brzezinski2014} and Kr\"ahmer \cite{Krahmer2012}, respectively.

Brzezi{\'n}ski and Sitarz \cite{BrzezinskiSitarz2017} defined other notion of smoothness of algebras, termed {\em differential smoothness} due to the use of differential graded algebras of a specified dimension that admits a noncommutative version of the Hodge star isomorphism, which considers the existence of a top form in a differential calculus over an algebra together with a string version of the Poincar\'e duality realized as an isomorphism between complexes of differential and integral forms. This new notion of smoothness is different and more constructive than the homological smoothness mentioned above. \textquotedblleft The idea behind the {\em differential smoothness} of algebras is rooted in the observation that a classical smooth orientable manifold, in addition to de Rham complex of differential forms, admits also the complex of {\em integral forms} isomorphic to the de Rham complex \cite[Section 4.5]{Manin1997}. The de Rham differential can be understood as a special left connection, while the boundary operator in the complex of integral forms is an example of a {\em right connection}\textquotedblright\ \cite[p. 413]{BrzezinskiSitarz2017}.

Several authors (e.g. \cite{Brzezinski2016, BrzezinskiElKaoutitLomp2010, BrzezinskiSitarz2017, DuboisViolette1988, DuboisVioletteKernerMadore1990, Karacuha2015, KaracuhaLomp2014, ReyesSarmiento2022}) have characterized the differential smoothness of algebras such as the quantum two - and three - spheres, disc, plane, the noncommutative torus, the coordinate algebras of the quantum group $SU_q(2)$, the noncommutative pillow algebra, the quantum cone algebras, the quantum polynomial algebras, Hopf algebra domains of Gelfand-Kirillov dimension two that are not PI, some 3-dimensional skew polynomial algebras, diffusion algebras in three generators, and noncommutative coordinate algebras of deformations of several examples of classical orbifolds such as the pillow orbifold, singular cones and lens spaces. An interesting fact is that some of these algebras are also homologically smooth in the Van den Bergh's sense. In particular, the differential smoothness of some families of Ore extensions has been investigated by some researchers \cite{Brzezinski2015, Brzezinski2016, BrzezinskiLomp2018, KaracuhaLomp2014}.

Having in mind the interest on the differential smoothness of Ore extensions and Zhang and Zhang's assertion about the few properties known to be preserved under double extensions, it is the purpose of this paper to investigate the differential smoothness of the double extension regular algebras of type (14641). As a matter of fact, since double extensions correspond to a construction of {\em bi-quadratic algebras} on four generators with PBW (Poincar\'e-Birkhoff-Witt) basis defined by Bavula \cite[p. 699]{Bavula2023}, and that the differential smoothness of bi-quadratic algebras on three generators with PBW basis was characterized by the authors in \cite{RubianoReyes2024DSBiquadraticAlgebras}, this paper is a sequel of the research of the smooth geometry of Bavula's algebras and two-step iterated Ore extensions from Brzezi{\'n}ski and Sitarz's point of view.

The paper is organized as follows. In Section \ref{DefinitionsandpreliminariesDOE} we review the key facts on double extension regular algebras of type (14641) in order to set up notation and render this paper self-contained. The corresponding list of these algebras is given in Tables \ref{FirstTableDOE}, \ref{SecondTableDOE}, \ref{ThirdTableDOE} and \ref{FourthTableDOE}. Next, Section \ref{DefinitionsandpreliminariesDSA} contains definitions and preliminaries on differential smoothness of algebras; we also set up notation necessary for the paper. In Section \ref{DICDOE} we prove our main result, Theorem \ref{DifferentiallySmoothDOE}, which says that none double extension of type (14641) is differentially smooth. Examples \ref{ZhangZhang2009Subcase4.1.1}, \ref{ZhangZhang2008Example4.2} and \ref{DoubleExtensionsDSYorN} show interesting facts on the differential smoothness of double extensions of differentially smooth algebras; these allow us to answer Question \ref{QuestionDSextendsDO}. Finally, we say some words about a future research.

Throughout the paper, $\mathbb{N}$ denotes the set of natural numbers including zero. The word ring means an associative ring with identity not necessarily commutative. $K$ and $\Bbbk$ denote a commutative ring with identity and an algebraically closed field, respectively. The term algebra means $\Bbbk$-algebra and $M_{r\times c}(R)$ denotes the ring of matrices of size $r\times c$ with entries in $R$.

\section{Double extension regular algebras of type (14641)}\label{DefinitionsandpreliminariesDOE}

We recall the definition of a double extension introduced by Zhang and Zhang \cite{ZhangZhang2008}. Since some typos ocurred in their papers \cite[p. 2674]{ZhangZhang2008} and \cite[p. 379]{ZhangZhang2009} concerning the relations that the data of a double extension must satisfy, we follow the corrections presented by Carvalho et al. \cite{Carvalhoetal2011}.

\begin{definition}[{\cite[Definition 1.3]{ZhangZhang2008}; \cite[Definition 1.1]{Carvalhoetal2011}}]\label{DoubleOreDefinition}
Let $R$ be a subalgebra of a $\Bbbk$-algebra $B$.
\begin{itemize}
    \item[\rm (a)] $B$ is called a {\it right double extension} of $R$ if the following conditions hold:
    \begin{itemize}
        \item[\rm (i)] $B$ is generated by $R$ and two new indeterminates $y_1$ and $y_2$;
        \item[\rm (ii)] $y_1$ and $y_2$ satisfy the relation
        \begin{equation}\label{Carvalhoetal2011(1.I)}
        y_2y_1 = p_{12}y_1y_2 + p_{11}y_1^2 + \tau_1y_1 + \tau_2y_2 + \tau_0,
        \end{equation}
        for some $p_{12}, p_{11} \in \Bbbk$ and $\tau_1, \tau_2, \tau_0 \in R$;
        \item[\rm (iii)] $B$ is a free left $R$-module with basis $\left\{y_1^{i}y_2^{j} \mid i, j \ge 0\right\}$.
        \item[\rm (iv)] $y_1R + y_2R + R\subseteq Ry_1 + Ry_2 + R$.
 \end{itemize}
    \item[\rm (b)] A right double extension $B$ of $R$ is called a {\em double extension} of $R$ if
    \begin{enumerate}
        \item [\rm (i)] $p_{12} \neq 0$;
        \item [\rm (ii)] $B$ is a free right $R$-module with basis $\left\{ y_2^{i}y_1^{j}\mid i, j \ge 0\right\}$;
        \item [\rm (iii)] $y_1R + y_2R + R = Ry_1 + Ry_2 + R$.
    \end{enumerate}
\end{itemize}
\end{definition}

\begin{remark}
As is well-known, two-step iterated Ore extensions of the form $R[y_1; \sigma_1, \delta_1][y_2; \sigma_2, \delta_2]$ are free left $R$-modules with a basis $\{y_1^{n_1} y_2^{n_2}\}_{n_1, n_2 \ge 0}$ \cite[Lemma 1.5]{ZhangZhang2008}. It can be seen that in general these extensions are not a (right) double extension in the sense of Definition \ref{DoubleOreDefinition} (a) because this kind of extensions might not have a relation of the form (\ref{Carvalhoetal2011(1.I)}). However, if $\sigma_2$ is chosen properly so that (\ref{Carvalhoetal2011(1.I)}) holds for these extensions, then all of them become a (right) double extension \cite[p. 2671]{ZhangZhang2008}. In this way, many double extensions are iterated Ore extensions and the condition in Definition \ref{DoubleOreDefinition}(a)(iii) is reasonable. As Zhang and Zhang \cite[p. 2671]{ZhangZhang2008} said, \textquotedblleft Our definition of a double extension is neither most general nor ideal, but it works very well in \cite{ZhangZhang2009}\textquotedblright. 
\end{remark}

Condition (a)(iv) from Definition \ref{DoubleOreDefinition} is equivalent to the existence of two maps
\[
\sigma = \begin{bmatrix}
    \sigma_{11} & \sigma_{12} \\ \sigma_{21} & \sigma_{22}
\end{bmatrix}: R\to M_{2\times 2}(R)\quad {\rm and}\quad \delta = \begin{bmatrix} \delta_1 \\ \delta_2  \end{bmatrix}: R\to M_{2\times 1}(R),
\]

such that
\begin{equation}\label{Carvalhoetal2011(1.II)}
    \begin{bmatrix}
        y_1 \\ y_2
    \end{bmatrix} r = \sigma(r) \begin{bmatrix}
        y_1 \\ y_2
    \end{bmatrix} + \delta(r) \quad {\rm for\ all}\ r\in R.
\end{equation}

If $B$ is a right double extension of $R$, we write $B = R_P[y_1, y_2;\sigma, \delta, \tau]$, where $P = (p_{12}, p_{11})$ with elements belonging to $\Bbbk$, $\tau = \{\tau_0, \tau_1, \tau_2\} \subseteq R$, and $\sigma, \delta$ are as above. $P$ is called a {\em parameter} and $\tau$ a {\em tail}, while the set $\{P, \sigma, \delta, \tau\}$ is said to be the {\em DE-data}. One of the particular cases of the double extensions is presented by Zhang and Zhang \cite[Convention 1.6.(c)]{ZhangZhang2008} as a {\it trimmed double extension}, for which $\delta$ is the zero map and $\tau = \{0, 0, 0\}$. We use the short notation $R_p[y_1, y_2; \sigma]$ to denote this subclass of extensions. This kind of double extensions will be important later.

For a right double extension $R_p[y_1, y_2; \sigma, \delta, \tau]$, all maps $\sigma_{ij}$ and $\delta_i$ are endomorphisms of the $\Bbbk$-vector space $R$. From \cite[Lemma 1.7]{ZhangZhang2008} we know that $\sigma$ must be a homomorphism of algebras, and $\delta$ is a $\sigma$-derivation in the sense that $\delta$ is $\Bbbk$-linear and satisfes $\delta(rr') = \sigma(r)\delta(r') + \delta(r)r'$ for all $r, r'\in R$. It is straightforward to see that if the matrix $\begin{bmatrix} \sigma_{11} & \sigma_{12} \\ \sigma_{21} & \sigma_{22} \end{bmatrix}$ is triangular, then both $\sigma_{11}$ and $\sigma_{22}$ are algebra homomorphisms. 


In the case that $\tau \subseteq \Bbbk$, the subalgebra of $R_P[y_1, y_2;\sigma, \delta, \tau]$ generated by $y_1$ and $y_2$ is the double extension $\Bbbk_P[y_1, y_2; \sigma', \delta', \tau']$, where $\sigma' = \sigma |_{\Bbbk}$ is the canonical embedding of $\Bbbk$ in $M_{2\times 2}(\Bbbk)$ and $\delta' = \delta |_{\Bbbk} = 0$ is the zero map. Carvalho et al. \cite[Proposition 1.2]{Carvalhoetal2011} proved that the latter is always an iterated Ore extension.

The next result characterizes double extensions.

\begin{proposition}[{\cite[Lemma 1.10 and Proposition 1.11]{ZhangZhang2008}; \cite[Proposition 1.5]{Carvalhoetal2011}}]\label{Carvalhoetal2011Proposition1.5}
Given a $\Bbbk$-algebra $R$, let $\sigma$ be a homomorphism from $R$ to $M_{2\times 2}(R)$, $\delta$ a $\sigma$-derivation from $R$ to $M_{2\times 1}(R)$, $P = (p_{12}, p_{11})$ a set of elements of $\Bbbk$, and $\tau = \{\tau_0, \tau_1, \tau_2\}$ a set of elements of $R$. Then, the associative $\Bbbk$-algebra $B$ generated by $R, y_1$ and $y_2$ subject to the relations {\rm (}\ref{Carvalhoetal2011(1.I)}{\rm )} and {\rm (}\ref{Carvalhoetal2011(1.II)}{\rm )}, is a right double extension if and only if the maps $\sigma_{ij}$ and $\rho_k$, $i\in \{1, 2\}, j, k \in \{0, 1, 2\}$, satisfy the six relations {\rm (}\ref{Carvalhoetal2011(1.III)}{\rm )} - {\rm (}\ref{Carvalhoetal2011(1.VIII)}{\rm )}  below, where $\sigma_{i0} = \delta_i$ and $\rho_k$ is a \underline{right} multiplication by $\tau_k$: 
\begin{equation}\label{Carvalhoetal2011(1.III)} 
    \sigma_{21} \sigma_{11} + p_{11}\sigma_{22}\sigma_{11} = p_{11}\sigma_{11}^2 + p_{11}^2 \sigma_{12}\sigma_{11} + p_{12}\sigma_{11}\sigma_{21} + p_{11}p_{12}\sigma_{12}\sigma_{21},
\end{equation}
\begin{align}    
    \sigma_{21} \sigma_{12} + p_{12} \sigma_{22} \sigma_{11} = &\ p_{11} \sigma_{11} \sigma_{12} + p_{11}p_{12}\sigma_{12}\sigma_{11} + p_{12}\sigma_{11}\sigma_{22} + p_{12}^2\sigma_{12}\sigma_{21}, \\
    \sigma_{22}\sigma_{12} = &\ p_{11} \sigma_{12}^2 + p_{12}\sigma_{12}\sigma_{22}, \\
    \sigma_{20} \sigma_{11} + \sigma_{21}\sigma_{10} + \underline{\rho_1 \sigma_{22}\sigma_{11}} = &\ p_{11} (\sigma_{10} \sigma_{11} + \sigma_{11}\sigma_{10} + \rho_{1}\sigma_{12}\sigma_{11}) \notag \\
    &\ + p_{12} (\sigma_{10} \sigma_{21} + \sigma_{11} \sigma_{20} + \rho_1 \sigma_{12} \sigma_{21}) + \tau_1 \sigma_{11} + \tau_2 \sigma_{21}, \\
    \sigma_{20} \sigma_{12} + \sigma_{22} \sigma_{10} + \underline{\rho_2 \sigma_{22} \sigma_{11}} = &\ p_{11} (\sigma_{10} \sigma_{12} + \sigma_{12}\sigma_{10} + \rho_{2} \sigma_{12} \sigma_{11}) \notag \\
    &\ +p_{12} (\sigma_{10} \sigma_{22} + \sigma_{12} \sigma_{20} + \rho_2 \sigma_{12} \sigma_{21}) +  \tau_{1} \sigma_{12} + \tau_2 \sigma_{22}, \\
    \sigma_{20} \sigma_{10} + \underline{\rho_0 \sigma_{22} \sigma_{11}} = &\ p_{11} (\sigma_{10}^2 + \rho_0 \sigma_{12} \sigma_{11}) + p_{12} (\sigma_{10} \sigma_{20} + \rho_0 \sigma_{12} \sigma_{21})\notag \\
    &\ + \tau_1 \sigma_{10} + \tau_2 \sigma_{10} + \tau_0 {\rm id}_R. \label{Carvalhoetal2011(1.VIII)}
\end{align}
\end{proposition}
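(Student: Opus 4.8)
The plan is to present $B$ via a rewriting (reduction) system and to apply Bergman's Diamond Lemma. View $B$ as the $\Bbbk$-algebra on a generating set formed by a fixed $\Bbbk$-basis of $R$ together with $y_1,y_2$, subject to the multiplication table of $R$ and to the rules obtained by reading (\ref{Carvalhoetal2011(1.II)}) and (\ref{Carvalhoetal2011(1.I)}) from left to right, namely $y_ir\rightsquigarrow\sigma_{i1}(r)y_1+\sigma_{i2}(r)y_2+\delta_i(r)$ for $i=1,2$ and $r\in R$, and $y_2y_1\rightsquigarrow p_{12}y_1y_2+p_{11}y_1^2+\tau_1y_1+\tau_2y_2+\tau_0$. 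A monomial is irreducible for this system precisely when it has the form $e\,y_1^{i}y_2^{j}$ with $e$ a basis element of $R$. Consequently, once confluence is established, the Diamond Lemma yields at once that the canonical map $R\to B$ is injective and that $B$ is a free left $R$-module with basis $\{y_1^{i}y_2^{j}\}_{i,j\ge 0}$, i.e.\ condition (iii) of Definition \ref{DoubleOreDefinition}(a); conditions (i)--(ii) hold by construction, and (iv) is read off directly from (\ref{Carvalhoetal2011(1.II)}). Thus the whole content of the proposition lies in the resolvability of the overlap ambiguities.

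First one fixes a monomial well-order making the three families of rules term-decreasing — for instance, comparing words first by total $y$-degree, then by the number of pairs in which a $y_2$ precedes a $y_1$, and finally by how far to the left the $R$-letters sit — so that the system is terminating; this verification is routine. Next one lists the overlap ambiguities. The word $y_irr'$ ($r,r'\in R$) is an overlap between (\ref{Carvalhoetal2011(1.II)}) and the multiplication of $R$; it resolves precisely because $\sigma$ is an algebra homomorphism and $\delta$ a $\sigma$-derivation, which is part of the hypothesis. Since a proper suffix of $y_2y_1$ (namely $y_1$) is never a proper prefix of $y_2y_1$, the rule (\ref{Carvalhoetal2011(1.I)}) has no self-overlap; it has no overlap with the $R$-multiplication rules either, and there are no inclusion ambiguities. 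Hence the \emph{only} remaining ambiguity is the word $y_2y_1r$, $r\in R$, where the suffix $y_1$ of (\ref{Carvalhoetal2011(1.I)}) meets the prefix $y_1$ of the rule $y_1r\rightsquigarrow\cdots$.

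The heart of the proof is to reduce $y_2y_1r$ along its two branches to normal form and compare. In branch~A one rewrites $y_1r$ first and then repeatedly applies (\ref{Carvalhoetal2011(1.II)}) to carry the newly produced elements of $R$ leftward past $y_2$; this creates exactly one further occurrence of $y_2y_1$ (inside $y_2\sigma_{11}(r)y_1$), and eliminating it by (\ref{Carvalhoetal2011(1.I)}) is where the underlined right multiplications $\rho_0,\rho_1,\rho_2$ by the entries of the tail appear, always attached to $\sigma_{22}\sigma_{11}$. In branch~B one rewrites $y_2y_1$ first and then pushes $r$ rightward through the five resulting summands via (\ref{Carvalhoetal2011(1.II)}); the summands $p_{11}y_1^2r$ and $p_{12}y_1y_2r$ each spawn a further $y_2y_1$ whose elimination produces the $p_{11}\rho_k\sigma_{12}\sigma_{11}$ and $p_{12}\rho_k\sigma_{12}\sigma_{21}$ terms, while the summands $\tau_1y_1r,\tau_2y_2r,\tau_0r$ produce the terms $\tau_k\sigma_{ij}(r)$ with the tail on the left. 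Collecting the coefficients of $y_1^{2},\,y_1y_2,\,y_2^{2},\,y_1,\,y_2,\,1$, branch~A yields exactly the left-hand sides of (\ref{Carvalhoetal2011(1.III)})--(\ref{Carvalhoetal2011(1.VIII)}) and branch~B exactly the right-hand sides (using $\sigma_{i0}=\delta_i$). Therefore the ambiguity $y_2y_1r$ is resolvable for every $r\in R$ if and only if (\ref{Carvalhoetal2011(1.III)})--(\ref{Carvalhoetal2011(1.VIII)}) hold, and the Diamond Lemma completes the ``if'' direction. Conversely, if $B$ is a right double extension, then $\{y_1^{i}y_2^{j}\}$ is a left $R$-basis, so the two expansions of $y_2y_1r$ must coincide coefficientwise, which is exactly (\ref{Carvalhoetal2011(1.III)})--(\ref{Carvalhoetal2011(1.VIII)}).

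I expect the only real obstacle to be bookkeeping: carrying out the two multi-step reductions of $y_2y_1r$ without transcription errors, keeping the composition order of the $\sigma_{ij}$ straight (throughout, $\sigma_{ij}\sigma_{k\ell}$ means ``apply $\sigma_{k\ell}$, then $\sigma_{ij}$''), and cleanly separating the left occurrences $\tau_k\sigma_{ij}(r)$ produced by branch~B from the right occurrences $\rho_k\sigma_{22}\sigma_{11}(r)=\sigma_{22}\sigma_{11}(r)\tau_k$ produced by branch~A. All of this is elementary; the termination check and the two reductions are the only computations, and they are routine.
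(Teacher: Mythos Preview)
Your proposal is correct. The paper does not supply its own proof of this proposition; it is quoted from \cite[Lemma 1.10 and Proposition 1.11]{ZhangZhang2008} and \cite[Proposition 1.5]{Carvalhoetal2011}, so there is nothing in the present paper to compare against. Your Diamond Lemma argument is exactly the standard way such a result is established and is faithful to the computations in the original references: Zhang and Zhang derive the six constraints by forcing the two expansions of $(y_2y_1)r=y_2(y_1r)$ to agree (their Lemma~1.10) and then invoke a basis argument (their Proposition~1.11), while Carvalho et al.\ redo the computation to fix the underlined terms. Your identification of $y_2y_1r$ as the sole nontrivial overlap, your tracking of where the right multiplications $\rho_k\sigma_{22}\sigma_{11}$ arise in branch~A versus the left multiplications $\tau_k\sigma_{ij}$ in branch~B, and your handling of the converse via the freeness of $\{y_1^iy_2^j\}$ are all accurate. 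The only comment is cosmetic: when you write the termination order you should also stipulate how $R$-words themselves are compared (any admissible order on the fixed basis of $R$ will do), but this is implicit in ``routine''.
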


\begin{remark}
\begin{enumerate}
    \item [\rm (i)] \cite[Remark 1.6]{Carvalhoetal2011} \label{RemarkCarvalho} Proposition \ref{Carvalhoetal2011Proposition1.5} implies the uniqueness, up to isomorphism, of a right double extension of $R$, with given $\sigma, \delta, P$ and $\tau$, provided such an extension exists. Indeed, assume $\overline{B} = R_P[y_1, y_2; \sigma, \delta, \tau]$ is a right double extension of $R$. Then, by \cite[Lemmas 1.7 and 1.10(b)]{ZhangZhang2008}, the data $\sigma, \delta, P$ and $\tau$ satisfy the conditions of Proposition \ref{Carvalhoetal2011Proposition1.5}. Let $B$ be as in this proposition. Then, there is an algebra homomorphism from $B$ to $\overline{B}$ which restricts to the identity on $R$ and maps $y_i\in B$ to the corresponding element $y_i \in \overline{B}$, $i = 1, 2$. Since $B$ is a free left $R$-module with basis $\left\{y_1^{i} y_2^{j} \mid i, j \ge 0\right\}$ and the same holds for $\overline{B}$, this map is an isomorphism, thus proving uniqueness. 
    
    \item [\rm (ii)] \cite[p. 2842]{Carvalhoetal2011} Let $B = R_P[y_1, y_2; \sigma, \delta, \tau]$ be a right double extension and suppose that $p_{12} \neq 1$. Then, from the ideas above, by choosing adequate generators $\overline{y}_i$ and (possibly) modifying the data $\sigma, \delta, \tau$ one can assume that $p_{11} = 0$. If $\overline{B} = R_P[\overline{y}_1, \overline{y}_2 ; \overline{\sigma}, \overline{\delta}, \overline{\tau}]$ is a right double extension with $p_{11} = 0$, then $\overline{B}$ has a natural filtration, given by setting ${\rm deg}\ R = 0$ and ${\rm deg}\ \overline{y}_1 = {\rm deg}\ \overline{y}_2 = 1$. It can be seen that in view of relations (\ref{Carvalhoetal2011(1.I)}) and (\ref{Carvalhoetal2011(1.II)}), that the associated graded algebra $G(B)$ is isomorphic to $R_P[\overline{y}_1, \overline{y}_2; \overline{\sigma}, \{0, 0, 0\}]$. 
\end{enumerate}
\end{remark}

Next, we recall the key results formulated by Carvalho et al. \cite{Carvalhoetal2011} about relations between double extensions and two-step iterated Ore extensions (c.f. Zhang and Zhang \cite[Proposition 3.6]{ZhangZhang2009}).

\begin{proposition}\label{Carvalhoetal2011Theorems2.2and2.4}
\begin{enumerate}
    \item [\rm (1)] \cite[Theorem 2.2]{Carvalhoetal2011} Let $R, B$ be $\Bbbk$-algebras such that $B$ is an extension of $R$. Assume $P = \{p_{12}, p_{11}\} \subseteq \Bbbk$, $\tau = \{\tau_0, \tau_1, \tau_2\} \subseteq R$, $\sigma$ is an algebra homomorphism from $R$ to $M_{2}(R)$ and $\delta$ is a $\sigma$-derivation from $R$ to $M_{2\times 1}(R)$. 
\begin{itemize}
    \item [\rm (a)] The following conditions are equivalent:
    \begin{itemize}
        \item $B = R_P[y_1, y_2; \sigma, \delta, \tau]$ is a right double extension of $R$ which can be presented as an iterated Ore extension $R[y_1;\sigma_1, d_1][y_2; \sigma_2, d_2]$.
        
        \item $B = R_P[y_1, y_2; \sigma, \delta, \tau]$ is right double extension of $R$ with $\sigma_{12} = 0$.
        
        \item $R[y_1;\sigma_1, d_1][y_2; \sigma_2, d_2]$ is and iterated Ore extensions such that 
        \begin{align*}
            \sigma_2(R) \subseteq (R)&, \ \ \ \sigma_2(y_1)= p_{12}y_1 + \tau_2, \\
            d_2(R) \subseteq Ry_1 + R&, \ \ \ d_2(y_1) = p_{11}y^2_1 + \tau_1y_1 + \tau_0,
        \end{align*}
        for some $p_{ij} \in \Bbbk$ and $\tau_i \in R$. The maps $\sigma, \delta, \sigma_i$ and $\delta_i$, $i = 1,2$, are related by
    \[
    \sigma=\begin{bmatrix} \sigma_{1} & 0 \\ \sigma_{21} & \sigma_{2}|_R \end{bmatrix},
    \ \ \ \delta(a)= \begin{bmatrix} d_1(a)\\  d_2(r) - \sigma_{21}(a)y_1 \end{bmatrix}, \ \ \  \text{for all} \ a \in R.
    \]
    \end{itemize}

\item[\rm (b)] If one of the equivalent statements from {\rm (}1{\rm )} holds, then $B$ is a double extension of $R$ if and only if $\sigma_1 = \sigma_{11}$ and $\sigma_2|_R = \sigma_{22}$ are automorphisms of $R$ and $p_{12} \neq 0$.
\end{itemize}

\item [\rm (2)] \cite[Theorem 2.4]{Carvalhoetal2011} Let $B = R_P[y_1, y_2; \sigma, \delta, \tau]$ be a right double extension of the $\Bbbk$-algebra $R$, where $P = \{p_{12}, p_{11}\} \in \Bbbk$, $\tau = \{\tau_2, \tau_1, \tau_0\} \subseteq R$, $\sigma: R \to M_2(R)$ is an algebra homomorphism and $\delta: R \to M_{2 \times 1}(R)$ is a $\sigma$-derivation. Then, $B$ can be presented as an iterated Ore extension $R[y_2; \sigma'_2, d'_2][y_1; \sigma'_1, d'_1]$ if and only if $\sigma_{21} = 0$, $p_{12} \neq 0$ and $p_{11} = 0$. In this case, $B$ is a double extension if and only if $\sigma'_2 = \sigma_{22}$ and $\sigma'_1|_R = \sigma_{11}$ are automorphisms of $R$.
\end{enumerate}
\end{proposition}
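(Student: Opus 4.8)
The plan is to reduce, in both parts, the question ``can $B$ be written as a length-two iterated Ore extension?'' to the vanishing of a single off-diagonal entry of $\sigma$, and then to build the two Ore steps by hand, using the PBW bases of Definition~\ref{DoubleOreDefinition} for injectivity and the six relations of Proposition~\ref{Carvalhoetal2011Proposition1.5} for well-definedness. I begin with the easy implications in part~(1)(a). If $B=R[y_1;\sigma_1,d_1][y_2;\sigma_2,d_2]$, then comparing the inner relation $y_1r=\sigma_1(r)y_1+d_1(r)$ ($r\in R$) with (\ref{Carvalhoetal2011(1.II)}) and using that $B$ is free as a left $R$-module on $\{y_1^iy_2^j\}$ forces $\sigma_{11}=\sigma_1$, $\sigma_{12}=0$ and $\delta_1=d_1$; reading off $\sigma_2,d_2$ on $R$ and on $y_1$ likewise shows that the third bulleted list of conditions is just a transcription of the data of such an Ore presentation, so the three statements in (1)(a) will be equivalent once the middle one is shown to force the existence of the presentation.

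So assume $B$ is a right double extension with $\sigma_{12}=0$. By \cite[Lemma~1.7]{ZhangZhang2008} applied to the (now) triangular $\sigma$, the map $\sigma_{11}$ is an algebra endomorphism of $R$, and the first component of the $\sigma$-derivation identity gives $\delta_1(rr')=\sigma_{11}(r)\delta_1(r')+\delta_1(r)r'$; hence the subalgebra $S\subseteq B$ generated by $R$ and $y_1$ is a homomorphic image of $R[y_1;\sigma_{11},\delta_1]$, and since $\{y_1^i\}$ is already left $R$-independent in $B$ this image is an isomorphism, $S\cong R[y_1;\sigma_{11},\delta_1]$. Next I would build the outer step: relations (\ref{Carvalhoetal2011(1.I)})--(\ref{Carvalhoetal2011(1.II)}) dictate setting $\sigma_2|_R:=\sigma_{22}$, $d_2|_R(r):=\sigma_{21}(r)y_1+\delta_2(r)\in Ry_1+R\subseteq S$, $\sigma_2(y_1):=p_{12}y_1+\tau_2$ and $d_2(y_1):=p_{11}y_1^2+\tau_1y_1+\tau_0$, and then extending $\sigma_2$ multiplicatively and $d_2$ by the Leibniz rule over all of $S$. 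The step that needs work is verifying that these extensions are consistent with the one defining relation $y_1r=\sigma_{11}(r)y_1+\delta_1(r)$ of $S$, i.e.\ that $\sigma_2(y_1)\sigma_2(r)=\sigma_2(\sigma_{11}(r))\sigma_2(y_1)+\sigma_2(\delta_1(r))$ and the analogous Leibniz identity for $d_2(y_1r)$ hold in $S$. Expanding these inside the free left $R$-submodule $R\oplus Ry_1\oplus Ry_1^2$ of $S$ and comparing coefficients of $1,y_1,y_1^2$ reproduces exactly the relations (\ref{Carvalhoetal2011(1.III)})--(\ref{Carvalhoetal2011(1.VIII)}) specialized to $\sigma_{12}=0$ (the one carrying $\sigma_{22}\sigma_{12}$ becoming vacuous); all of them hold because $B$ is a right double extension, so $\sigma_2,d_2$ are well defined. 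Then $T:=S[y_2;\sigma_2,d_2]$ exists, the obvious $R$-algebra map $T\to B$ fixing $R$ and sending $y_i\mapsto y_i$ is surjective, and, carrying the left $R$-basis $\{y_1^iy_2^j\}$ of $T$ onto that of $B$, it is an isomorphism. I expect this coefficient matching to be the main obstacle --- not conceptually deep, but the bulk of the work.

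Part~(1)(b) then follows from the standard fact that an Ore extension $A[y;\sigma,d]$ is free as a \emph{right} $A$-module on $\{y^j\}$ --- equivalently, satisfies $yA+A=Ay+A$ --- exactly when $\sigma$ is an automorphism: applying this to $S$ over $R$ requires $\sigma_{11}$ bijective, and to $T$ over $S$ requires $\sigma_2$ bijective on $S$, which, since $\sigma_2(y_1)=p_{12}y_1+\tau_2$ is invertible in $S$ precisely when $p_{12}\neq0$, reduces to $\sigma_2|_R=\sigma_{22}$ being bijective. Finally, part~(2) is the mirror argument with the order of adjunction reversed: presenting $B$ as $R[y_2;\sigma'_2,d'_2][y_1;\sigma'_1,d'_1]$ forces $\sigma_{21}=0$, and rewriting (\ref{Carvalhoetal2011(1.I)}) as an Ore relation $y_1y_2=\sigma'_1(y_2)y_1+d'_1(y_2)$ with coefficients in $R[y_2;\sigma'_2,d'_2]$ requires both $p_{11}=0$ (to eliminate the $y_1^2$ term) and $p_{12}\neq0$ (to invert the leading coefficient of $y_2y_1$), giving $\sigma'_1(y_2)=p_{12}^{-1}(y_2-\tau_1)$ and $d'_1(y_2)=-p_{12}^{-1}(\tau_2y_2+\tau_0)$; the consistency check against the six relations and the freeness/automorphism bookkeeping then proceed exactly as above.
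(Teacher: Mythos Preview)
The paper does not give its own proof of this proposition: it is stated with citations to \cite[Theorems~2.2 and~2.4]{Carvalhoetal2011} and used as a black box, so there is no in-paper argument to compare against. Your sketch follows what is essentially the only natural route---reduce to the vanishing of an off-diagonal $\sigma_{ij}$, build the inner Ore step from the corresponding diagonal entry, and verify by coefficient matching against the relations (\ref{Carvalhoetal2011(1.III)})--(\ref{Carvalhoetal2011(1.VIII)}) that the outer data extend consistently---and this is indeed how the cited source proceeds.

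Two small points of caution. First, in part~(1)(b) you assert that $\sigma_2$ being an automorphism of $S=R[y_1;\sigma_{11},\delta_1]$ is equivalent to $\sigma_{22}=\sigma_2|_R$ bijective and $p_{12}\neq0$; this is true but deserves a line of argument (filter $S$ by $y_1$-degree, note $\sigma_2$ is filtered with associated graded map determined by $\sigma_{22}$ and multiplication by $p_{12}$, and conclude). Second, for $B$ to be a double extension you also need condition~(b)(iii) of Definition~\ref{DoubleOreDefinition}, namely $y_1R+y_2R+R=Ry_1+Ry_2+R$; this does follow from both $\sigma_{11}$ and $\sigma_{22}$ being surjective (so that each $y_ir$ can be rewritten with the $r$'s on the left), but you should say so explicitly rather than fold it silently into the right-freeness claim.
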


\begin{example}[{\cite[Example 4.1]{ZhangZhang2008}}]\label{ZhangZhang2008Example4.1}
Let $R = \Bbbk[x]$. As expected, there are so many different right double extensions of $R$ and if we assume that ${\rm deg}\ x = {\rm deg}\ y_1 = {\rm deg}\ y_2 = 1$, then all connected graded double extensions $\Bbbk[x]_P[y_1, y_2; \sigma, \delta, \tau]$ are regular algebras of global dimension three investigated by Artin and Schelter \cite{ArtinSchelter1987}. Let us see some particular situations to illustrate that the DE-data can be of different kinds. 

From relations (\ref{Carvalhoetal2011(1.I)}) and (\ref{Carvalhoetal2011(1.II)}) we have that 
\begin{align*}
    y_2 y_1 = &\ p_{12} y_1y_2 + p_{11}y_1^2 + axy_1 + bxy_2 + cx^2, \\
    y_1 x = &\ dxy_1 + exy_2 + fx^2, \quad {\rm and} \\
    y_2x = &\ gxy_1 + hxy_2 + ix^2.
\end{align*}

The six relations appearing in Proposition \ref{Carvalhoetal2011Proposition1.5} are equivalent to the fact that the overlap between the above three relations can be resolved. By using that $p_{12} \neq 0$, we may choose $P$ to be $(p, 0)$ or $(1, 1)$, while the invertibility of $\sigma$ is equivalent to the condition $dh - ge \neq 0$. 

Next, we present the list of defining relations of different possibilities for the algebra $B$.
\begin{enumerate}
    \item[\rm (i)] {\bf Algebra} $B^1:= B^1(p, a, b, c)$, where $a, b, c, p \in \Bbbk$ and $p \neq 0,\ b\neq 0, 1$.
    \begin{align}
        y_2 y_1 = &\ py_1 y_2 + \frac{bc}{1-b}(pb - 1)xy_1 + ax^2, \label{rel1B1} \\
        y_1 x = &\ bxy_1, \label{rel2B1} \quad {\rm and} \\
        y_2 x = &\ b^{-1}xy_2 + cx^2.\label{rel3B1}
    \end{align}

The homomorphism $\sigma$ is determined by $\sigma(x) = \begin{bmatrix}
    bx & 0 \\ 0 & b^{-1}x \end{bmatrix}$. In this case $\sigma_{12} = \sigma_{21} = 0$ and $\sigma_{11}$ and $\sigma_{22}$ are algebra automorphisms. The derivation is determined by $\delta(x) = \begin{bmatrix} 0 & cx^2 \end{bmatrix}$. The parameter $P$ is $(p, 0)$ and the tail is $\tau$ is $\left\{ \frac{bc}{1-b} (pb - 1)x, 0, ax^2 \right\}$. By using the linear transformation $y_2 \to y_2 + \frac{bc}{b - 1}x$ we get that $c = 0$. Thus, $B^{1}(1, p, a, b, c) \cong B^{1}(1, p, a, b, 0)$.

    \item[\rm (ii)] {\bf Algebra} $B^2:= B^2(a, b, c)$, where $a, b, c \in \Bbbk$ and $b\neq 0$.
    \begin{align*}
        y_2 y_1 = &\ -y_1 y_2 + ax^2, \\
        y_1 x = &\ b^{-1}xy_2 + cx^2, \quad {\rm and} \\
        y_2 x = &\ bxy_1 + (-bc) x^2.
    \end{align*}

The homomorphism $\sigma$ is determined by $\sigma(x) = \begin{bmatrix}
    0 & b^{-1}x \\ bx & 0 \end{bmatrix}$. It can be seen that $\sigma(x^n) = \begin{bmatrix}
    0 & b^{-1}x^n \\ bx^n & 0 \end{bmatrix}$ when $n$ is odd and $\sigma(x^n) = \begin{bmatrix}
    x^n & 0 \\ 0 & x^n \end{bmatrix}$ when $n$ is even. Note that in this case $\sigma_{11}$ and $\sigma_{22}$ are not algebra homomorphisms of $A$. The derivation is determined by $\delta(x) = \begin{bmatrix} cx^2 & -bcx^2 \end{bmatrix}$. The parameter $P$ is $(-1, 0)$ and the tail is $\tau$ is $\left\{ 0, 0, ax^2 \right\}$. Hence, $B^{2}(a, b, c) \cong B^{2}(ab^{-1}, 1, c)$.

    Note that for most $(a, b, c)$ (for example, $(a, b, c) = (1, 1, 0)$), the algebra $B^{2}(a, b, c)$ is not an Ore extension of any regular algebra of global dimension two, so $B^2(a, b, c)$ is not an iterated Ore extension of $\Bbbk[x]$ with ${\rm char}(\Bbbk) = 0$. Carvalho et al. \cite[Proposition 2.7]{Carvalhoetal2011} showed that this is not so in case the characteristic of the base field is two.
    
    \item[\rm (iii)] {\bf Algebra} $B^3:= B^3(a)$, where $a \in \Bbbk^{*}$.
    \begin{align*}
        y_2 y_1 = &\ y_1 y_2, \\
        y_1 x = &\ axy_1 + xy_2, \quad {\rm and} \\
        y_2 x = &\ axy_2.
    \end{align*}
    
The homomorphism $\sigma$ is determined by $\sigma(x) = \begin{bmatrix}
    ax & x \\ 0 & ax \end{bmatrix}$ and the derivation is zero. The parameter $P$ is $(1, 0)$ and the tail $\tau$ is $\{0, 0, 0\}$.

    \item[\rm (iii)] {\bf Algebra} $B^4:= B^4(a, b, c)$, where $a, b, c \in \Bbbk$ and $b\neq -1$.
    \begin{align*}
        y_2 y_1 = &\ y_1 y_2+ y_1^2 + axy_1 + \frac{b}{1 + b}xy_2 + cx^2, \\
        y_1 x = &\ xy_1 + bx^2, \\
        y_2 x = &\ (2 + 2b^{-1})xy_1 + xy_2.
    \end{align*}

    The homomorphism $\sigma$ is determined by $\sigma(x) = \begin{bmatrix}
        x & 0 \\ (2 + 2b^{-1})x & x \end{bmatrix}$. The derivation is determined by $\delta(x) = \begin{bmatrix} bx^2 \\ 0 \end{bmatrix}$. The parameter $P$ is $(1, 1)$ and the tail $\tau$ is $\left\{ax, \frac{b}{1+b}x, cx^2\right\}$.
\end{enumerate}

If we denote by $B$ any of the algebras $B^{i}$, then it can be seen that the element $x$ is normalizing in $B$ and $B / \langle x\rangle$ is a regular algebra of global dimension two. For this reason, these algebras are called {\em normal extensions}. Except for $B^2$, these algebras are also iterated Ore extensions of $\Bbbk[x]$. A detailed classification and general properties of these algebras can be found in \cite{ArtinSchelter1987, ArtinTateVandenBergh2007, ArtinTateVandenBergh1991}. 
\end{example}

As we said in the Introduction, Zhang and Zhang \cite{ZhangZhang2009} were interested only in regular algebras $B$ of dimension four that are generated in degree one, and in the case that $B$ is generated by four elements, the projective resolution of the trivial module $\Bbbk_B$ is of the form (\ref{LISTResolution}), that is, algebras {\em of type} (14641). The next proposition shows explicitly the relation of these algebras with Ore extensions.

\begin{proposition}[{\cite[Theorem 0.1]{ZhangZhang2009}}]
Let $B$ be a connected graded algebra generated by four elements of degree one. If $B$ is a double extension $R_P[y_1, y_2; \sigma, \tau]$ where $R$ is an Artin-Schelter regular algebra of dimension two, then the following assertions hold:
\begin{enumerate}
    \item [\rm (1)] $B$ is a strongly Noetherian, Auslander regular and Cohen-Macaulay domain.

    \item [\rm (2)] $B$ is of type {\rm (}14641{\rm )}. As a consequence, $B$ is Koszul.

    \item [\rm (3)] If $B$ is not isomorphic to an Ore extension of an Artin-Schelter regular algebra of dimension three, then the trimmed double extension $R_P[y_1, y_2; \sigma]$ is isomorphic to one of 26 families.
\end{enumerate}
\end{proposition}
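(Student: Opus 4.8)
The plan is to get (1) and (2) from the filtered/graded behaviour of the PBW basis together with standard inheritance results for Artin--Schelter regular algebras, and to obtain (3) as the lengthy solution of the constraint system of Proposition \ref{Carvalhoetal2011Proposition1.5} in the graded case. For (1), I would filter $B$ by total degree in $y_1,y_2$ (with $\deg R=0$); since in $\mathrm{gr}$ both the tail $\tau$ and any $\delta$ sit in strictly lower filtration degree than the leading parts of relations (\ref{Carvalhoetal2011(1.I)})--(\ref{Carvalhoetal2011(1.II)}), the associated graded $G(B)$ is the trimmed double extension $R_P[\overline y_1,\overline y_2;\overline\sigma]$. I would then transport strong Noetherianity, Auslander regularity, the Cohen--Macaulay property and the domain property from $R$ (which, being AS-regular of global dimension two, enjoys all of them) to $G(B)$ by the double-extension analogues of the corresponding Ore-extension theorems --- when $G(B)$ factors as a two-step iterated Ore extension, by Proposition \ref{Carvalhoetal2011Theorems2.2and2.4}, this is just two applications of the Ore-extension results, and in those applications the relevant maps are automorphisms as required --- and finally lift from $G(B)$ to $B$ by the usual filtered-to-graded arguments ($\mathrm{gr}$ Noetherian $\Rightarrow$ Noetherian, $\mathrm{gr}$ domain $\Rightarrow$ domain, Auslander and Cohen--Macaulay passing from $\mathrm{gr}$ to the filtered ring). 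Stability of the whole picture under $-\otimes_\Bbbk C$ for commutative Noetherian $C$ accounts for the word ``strongly''.

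For (2), I would first record $H_B(t)=H_R(t)\cdot(1-t)^{-2}=(1-t)^{-4}$, which follows from left $R$-freeness of $B$ on $\{y_1^iy_2^j\}$ with $\deg y_1=\deg y_2=1$ and $H_R(t)=(1-t)^{-2}$; in particular $\dim_\Bbbk B_1=4$, so $B$ is generated by four elements of degree one. Next I would show $B$ is AS-regular of global dimension four: $\mathrm{gldim}\,B\le\mathrm{gldim}\,R+2=4$ by iterating the double-extension global-dimension bound along the factorization of $B$, and AS-Gorenstein with the correct shift is inherited from $R$ the same way (connectedness and the grading being already present). Then \cite[Proposition 1.4]{LuPalmieriWyZhang2007} applies to a connected graded AS-regular algebra of global dimension four generated by four degree-one elements and gives exactly the minimal free resolution (\ref{LISTResolution}) of $\Bbbk_B$ --- equivalently, the Betti numbers are forced by $H_B$ together with Gorenstein self-duality once one knows that resolution is pure --- so $B$ is of type (14641); and because this resolution is linear (the $i$-th term is generated in degree $i$), $B$ is Koszul.

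Part (3) is the genuine content and the step where essentially all of the difficulty lies. Here I would enumerate the AS-regular algebras $R$ of global dimension two generated in degree one (up to isomorphism the quantum planes and the Jordan plane), then parametrize the graded data: $\sigma\colon R\to M_2(R)$ is a graded algebra homomorphism, hence determined by its restriction to $R_1$, i.e. by two matrices in $M_2(R_1)$, together with $P=(p_{12},p_{11})\in\Bbbk^2$, $p_{12}\ne 0$, while the trimmed hypothesis forces $\delta=0$ and $\tau=0$. Imposing (i) that $\sigma$ respect the defining relation of $R$, (ii) the constraint identities (\ref{Carvalhoetal2011(1.III)})--(\ref{Carvalhoetal2011(1.VIII)}) of Proposition \ref{Carvalhoetal2011Proposition1.5} --- in the trimmed case only the first three survive, those not involving $\delta$ or $\tau$ --- and (iii) invertibility of $\sigma$, yields a finite polynomial system in the matrix entries and in $P$. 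Solving it by a case split --- on $R$, on whether $\sigma$ is triangular (equivalently, by Proposition \ref{Carvalhoetal2011Theorems2.2and2.4}, on whether $B$ is an iterated Ore extension in one order or the other), and on the vanishing of the various entries --- and then reducing modulo graded isomorphisms ($\mathrm{GL}_2$ changes of $y_1,y_2$, automorphisms of $R$, and the normalization $p_{11}=0$ available when $p_{12}\ne 1$) produces finitely many normal forms. Discarding those for which $B$ is itself an Ore extension $S[z;\theta,d]$ with $S$ AS-regular of global dimension three --- these being already classified in \cite{LeBruynSmithVandenBergh1996,ArtinSchelter1987,ArtinTateVandenBergh1991} --- leaves precisely the $26$ families $\mathbb{A},\dots,\mathbb{Z}$. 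The bookkeeping of this case analysis and of the isomorphism reductions is the main obstacle; the rest is formal.
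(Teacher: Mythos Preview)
This proposition is not proved in the paper: it is stated with attribution to \cite[Theorem 0.1]{ZhangZhang2009} and used purely as background, so there is no proof here against which to compare your proposal. Your outline is a reasonable reconstruction of the strategy in the original Zhang--Zhang paper, and parts (2) and (3) are essentially on target (Hilbert series plus \cite{LuPalmieriWyZhang2007} for the resolution shape; a large case analysis on the $C$-constraints (\ref{ZhangZhang2009(C1ij)})--(\ref{ZhangZhang2009(C6ij)}) modulo linear and twist equivalence for the classification).

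One genuine weakness in your sketch of (1): you propose to transfer Auslander regularity, Cohen--Macaulay, strong Noetherianity, and the domain property from $R$ to $G(B)$ ``by the double-extension analogues of the corresponding Ore-extension theorems'', and then hedge with ``when $G(B)$ factors as a two-step iterated Ore extension''. But the whole point of the construction, and of the 26 families, is that many trimmed double extensions are \emph{not} iterated Ore extensions in either order (cf.\ Example \ref{ZhangZhang2008Example4.2} and Proposition \ref{ZhangZhang2009Propositions3.6and3.7}), so you cannot fall back on the Ore-extension transfer results in the crucial cases. Zhang and Zhang handle this directly for double extensions in \cite{ZhangZhang2008} (their Sections 1--3 establish regularity, Auslander, Cohen--Macaulay, and domain properties for $R_P[y_1,y_2;\sigma]$ under suitable hypotheses on $\sigma$, without reducing to Ore extensions). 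Your argument as written has a gap precisely where the result is nontrivial; to close it you would need to invoke or reproduce those double-extension-specific transfer theorems rather than the Ore-extension ones.
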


In Tables \ref{FirstTableDOE}, \ref{SecondTableDOE}, \ref{ThirdTableDOE} and \ref{FourthTableDOE} we present the detailed list of the 26 families following the labels $\mathbb{A}, \mathbb{B}, \dotsc, \mathbb{Z}$ used in \cite{ZhangZhang2009}.

\subsection{Double extensions of \texorpdfstring{$\Bbbk_Q[x_1, x_2]$}{Lg}}

Since the base field $\Bbbk$ is algebraically closed, $R$ is isomorphic to $\Bbbk_q[x_1, x_2] = \mathcal{O}_q(\Bbbk)$ with the relation $x_2 x_1 = qx_1 x_2$ (note that $Q = (q, 0)$, the {\em Manin's plane}), or $\Bbbk_J[x_1, x_2] = \mathcal{J}(\Bbbk)$ with the relation $x_2x_1 = x_1x_2 + x_1^2$ (here, $Q = J = (1,1)$, the {\em Jordan's plane}) \cite[Theorem 1.4]{Shirikov2005} or \cite[Lemma 2.4]{ZhangZhang2009}. Manin's plane and Jordan's plane are the only regular algebras of global dimension two \cite[Examples 1.8 and 1.9]{Rogalski2023}.

In general, we will write $\Bbbk_Q[x_1, x_2]$, with $Q = (q_{12}, q_{11})$, and 
\[
\Bbbk_Q[x_1, x_2] = \Bbbk\{x_1, x_2\} / \langle x_2x_1 - q_{11}x_1^2 - q_{12}x_1x_2\rangle,
\]

and for the computation we set $Q$ to be either $(1, 1)$ or $(q, 0)$. Zhang and Zhang \cite[Section 3]{ZhangZhang2009} classified regular domains of dimension four of the form $R_P[y_1, y_2;\sigma]$ up to isomorphism (equivalently, to classify $(P, \sigma)$) up to some equivalence relation. They were interested in double extensions that are not iterated Ore extensions. Throughout this section we consider $R$ as just said.

Let $\sigma: R \to M_2(R)$ be a graded algebra homomorphism. Let
\begin{equation}\label{ZhangZhang2009(E3.0.1)}
\sigma_{ij}(x_s) = \sum_{t = 1}^2 a_{ijst} x_t,\quad {\rm for\ all}\ i, j, s = 1, 2 \ {\rm and} \ a_{ijst} \in \Bbbk.
\end{equation}

Using (\ref{ZhangZhang2009(E3.0.1)}), the relation (\ref{Carvalhoetal2011(1.II)}) of the algebra $R_P[y_1, y_2;\sigma]$ can be expressed as follows (note that in this case $\delta = 0$). If $r = x_1$ and $x_2$ in (\ref{Carvalhoetal2011(1.II)}), we get the following relations:
\begin{align}
y_1x_1 = &\ \sigma_{11}(x_1)y_1 + \sigma_{12}(x_1)y_2 \notag \\
= &\ a_{1111}x_1y_1 + a_{1112}x_2y_1 + a_{1211}x_1y_2 + a_{1212}x_2y_2, \label{ZhangZhangMR11} \\
y_1x_2 = &\ \sigma_{11}(x_2)y_1 + \sigma_{12}(x_2)y_2 \notag  \\
= &\ a_{1121}x_1y_1 + a_{1122}x_2y_1 + a_{1221}x_1y_2 + a_{1222}x_2y_2, \label{ZhangZhangMR12} \\
y_2x_1 = &\ \sigma_{21}(x_1)y_1 + \sigma_{22}(x_1)y_2 \notag  \\
= &\ a_{2111} x_1 y_1 + a_{2112}x_2y_1 + a_{2211}x_1y_2 + a_{2212}x_2y_2, \quad{\rm and} \label{ZhangZhangMR21} \\
y_2 x_2 = &\ \sigma_{21}(x_2)y_1 + \sigma_{22}(x_2)y_2 \notag \\
= &\ a_{2121}x_1y_1 + a_{2122}x_2y_1 + a_{2221}x_1y_2 + a_{2222}x_2y_2. \label{ZhangZhangMR22}
\end{align}

These four relations are called {\em mixing relations} between $x_i$ and $y_i$. The double extension $R_P[y_1, y_2; \sigma]$ also has two non-mixing relations given by
\begin{align}
    x_2 x_1 = &\ q_{12}x_1x_2 + q_{11}x_1^2, \quad {\rm and} \label{ZhangZhangNRx} \\
    y_2 y_1 = &\ p_{12}y_1y_2 + p_{11}y_1^2. \label{ZhangZhangNRy}
\end{align}

If we consider the matrices
\begin{equation}\label{ZhangZhang2009(E3.0.2)}
    \Sigma_{ij} := \begin{bmatrix}
        a_{ij11} & a_{ij12} \\ 
        a_{ij21} & a_{ij22}
    \end{bmatrix} \quad {\rm and} \quad \Sigma := \begin{bmatrix}
        \Sigma_{11} & \Sigma_{12} \\ 
        \Sigma_{21} & \Sigma_{22}
    \end{bmatrix} = \begin{bmatrix} 
    a_{1111} & a_{1112} & a_{1211} & a_{1212} \\
    a_{1121} & a_{1122} & a_{1221} & a_{1222} \\
    a_{2111} & a_{2112} & a_{2211} & a_{2212} \\
    a_{2121} & a_{2122} & a_{2221} & a_{2222} \\
    \end{bmatrix},
\end{equation}

since it is assumed that $\sigma$ is a graded algebra homomorphism, then $\sigma$ is uniquely determined by $\Sigma$. Another matrix related to $\Sigma$ is given in the following way. Let 
\begin{equation}\label{ZhangZhang2009(E3.0.2)'}
    M_{ij} := \begin{bmatrix}
        a_{11ij} & a_{12ij} \\
        a_{21ij} & a_{22ij} \end{bmatrix} \quad {\rm and} \quad M := \begin{bmatrix}
        M_{11} & M_{12} \\
        M_{21} & M_{22} 
    \end{bmatrix}.
\end{equation}

As we can see, the matrix $M$ is obtained by re-arranging the entries of $\Sigma$ and $\Sigma$ is invertible if and only if $M$ is invertible.

\begin{remark}[{\cite[p. 388-390]{ZhangZhang2009}}]
Using that $\sigma$ is an algebra homomorphism, for elements $i, j, f, g$ we have that
\begin{align*}
    \sigma_{ij}(x_f x_g) = &\ \sum_{p = 1} ^{2} \sigma_{ip}(x_f) \sigma_{pj}(x_g) \\
    = &\ \sum_{p, s, t = 1}^{2} (a_{ipfs}x_s)(a_{pjgt}x_t) \\ 
    = &\ \sum_{p, s, t = 1}^{2} (a_{ipfs} a_{pjgt})x_sx_t \\
    = &\ \left( \sum_{p = 1}^2 a_{ipf1} a_{pjg1} \right) x_1^{2} + \left( \sum_{p=1}^{2} a_{ipf1} a_{pjg2} \right) x_1 x_2 \\
    &\ + \left(\sum_{p = 1}^2 a_{ipf2} a_{pjg1}\right) x_2x_2 + \left(\sum_{p = 1}^2 a_{ipf2} a_{pjg2}\right)x_2^{2}.
\end{align*}

Since $x_2 x_1 = q_{11}x_1^2 + q_{12}x_1x_2$ in $R$, we obtain that 
\begin{align}
    \sigma_{ij}(x_fx_g) = &\ \left[\left( a_{i1f1}a_{1jg1} + a_{i2f_1}a_{2jg1}   \right) + q_{11}\left(a_{i1f2}a_{1jg1} + a_{i2f2}a_{2jg1}\right)\right]x_1^{2} \notag \\
    &\ + \left[\left( a_{i1f1}a_{1jg2} + a_{i2f1} a_{2jg2}\right) + q_{12}\left(a_{i1f2} a_{1jg1} + a_{i2f2} a_{2jg1}\right) \right] x_1x_1 \notag \\
    &\ + \left(a_{i1f2} a_{1jg2} + a_{i2f2} a_{2jg2}\right)x_2^{2}. \label{ZhangZhang2009(E3.0.3)}
\end{align}

Using the same relation and that each $\sigma_{ij}$ is a $\Bbbk$-linear map, it follows that
\begin{equation}\label{ZhangZhang2009(E3.0.4)}
\sigma_{ij}(x_2x_1) = q_{11}\sigma_{ij}(x_1x_1) + q_{12}\sigma_{ij}(x_1x_2) \quad {\rm for\ all}\ i, j = 1, 2.
\end{equation}

By (\ref{ZhangZhang2009(E3.0.3)}) the left-hand and the right-hand sides of (\ref{ZhangZhang2009(E3.0.4)}) can be expressed as polynomials of $x_1$ and $x_2$. If we compare the coefficients of $x_1^{2}, x_1x_2$ and $x_2^{2}$, respectively, then we obtain the following identities: the coefficients of $x_1^{2}$ of (\ref{ZhangZhang2009(E3.0.4)}) give rise to a constraint between coefficients
\begin{align}
   &\ \left(a_{i121} a_{1j11} + a_{i221} a_{2j11}\right) + q_{11}\left(a_{i122} a_{1j11} + a_{i222}a_{2j11}\right) \notag \\
    &\ \quad \ = q_{11} \left[ \left(a_{i111} a_{1j11} + a_{i211} a_{2j11}\right) + q_{11} \left(a_{i112} a_{1j11} + a_{i212}a_{2j11}\right) \right] \notag \\
    &\ \quad \quad \ + q_{12} \left[\left(a_{i111} a_{1j21} + a_{i211} a_{2j21}\right) + q_{11}\left(a_{i112} a_{1j21} + a_{i212}a_{2j21}\right)\right]. \label{ZhangZhang2009(C1ij)}
\end{align}

With respect to the coefficients of $x_1x_2$ of (\ref{ZhangZhang2009(E3.0.4)}), we get that
\begin{align}
&\ \left(a_{i121} a_{1j12} + a_{i221} a_{2j12}\right) + q_{12}\left(a_{i122} a_{1j11} + a_{i222} a_{2j11} \right) \notag \\
 &\ \quad \ = q_{11} \left[\left(a_{i111} a_{1j12} + a_{i211} a_{2j12}\right) + q_{12} \left(a_{i112} a_{1j11} + a_{i212} a_{2j11} \right)\right] \notag \\
  &\ \quad \quad \ + q_{12} \left[ \left( a_{i111} a_{1j22} + a_{i211} a_{2j22}\right) + q_{12} \left(a_{i112} a_{1j21} + a_{i212} a_{2j21}\right)\right]. \label{ZhangZhang2009(C2ij)}
\end{align}
\end{remark}

The coefficients of $x_2^2$ of (\ref{ZhangZhang2009(E3.0.4)}) satisfy the equality
\begin{align}
   &\ \left( a_{i122} a_{1j12} + a_{i222} a_{2j12} \right) \notag \\ 
     &\ \quad \ = q_{11} \left(a_{i112} a_{1j12} + a_{i212} a_{2j12}\right) + q_{12} \left( a_{i112} a_{1j22} + a_{i212} a_{2j22}\right). \label{ZhangZhang2009(C3ij)}
\end{align}

Applying relations (\ref{Carvalhoetal2011(1.I)}),  (\ref{Carvalhoetal2011(1.II)}) and all of them appearing in Proposition \ref{Carvalhoetal2011Proposition1.5} to the elements $r = x_1$ and $x_2$, we obtain more relations between the elements $a_{ijkl}$. For $i, f, g, s, t = 1, 2$, 
\begin{align}
    \sigma_{fg}(\sigma_{st}(x_i)) = &\ \sigma_{fg} \left( \sum_{w = 1}^2 a_{stiw} x_w \right) = \sum_{w = 1}^2 a_{stiw}\sigma_{fg}(x_w)\notag \\
    &\ \sum_{w=1}^{2} a_{stiw} \sum_{j = 1}^{2} a_{fgwj} x_j = \sum_{j=1}^{2} \left( a_{sti1} a_{fg1j} + a_{sti2} a_{fg2j}\right)x_j \notag \\
    = &\ \left( a_{sti1} a_{fg11} + a_{sti2} a_{fg21}\right) x_1 + \left( a_{sti1} a_{fg12} + a_{sti2} a_{fg22}\right)x_2. \label{ZhangZhang2009(E3.0.5)}
\end{align}

(Since $P = (p_{12}, p_{11})$, Zhang and Zhang will consider $P = (1, 1)$ or $(p, 0)$ in some computations). By (\ref{ZhangZhang2009(E3.0.5)}), the expressions (\ref{Carvalhoetal2011(1.I)}), (\ref{Carvalhoetal2011(1.II)}) and all of them appearing in Proposition \ref{Carvalhoetal2011Proposition1.5}, when applied to $x_i$ are equivalent to the following constraints:
\begin{align}
  &\  \left( a_{11i1} a_{211j} + a_{11i2} a_{212j}\right) + p_{11}\left(a_{11i2} a_{222j} \right) \notag \\
  &\ \quad \ = p_{11} \left( a_{11i1} a_{111j} + a_{11i2} a_{112j}\right) + p_{11}^2\left( a_{11i1} a_{121j} + a_{11i2} a_{122j} \right) \notag \\
   &\ \quad \quad \ + p_{12} \left(a_{21i1} a_{111j} + a_{21i2} a_{112j} \right) + p_{11} p_{12} \left( a_{21i1} a_{121j} + a_{21i2} a_{122j} \right), \label{ZhangZhang2009(C4ij)} \\
   &\ (a_{12i1} a_{211j} + a_{12i2} a_{212j}) + p_{12} \left( a_{11i1} a_{221j} + a_{11i2} a_{222j} \right) \notag \\
     &\ \quad \ = p_{11} \left( a_{22i1} a_{111j} + a_{12i2} a_{112j}\right) + p_{11}p_{12} \left( a_{11i1} a_{121j} + a_{11i2} a_{122j} \right) \notag \\
      &\ \quad \quad \ + p_{12} \left( a_{22i1} a_{111j} + a_{22i2} a_{112j} \right) + p_{12}^2 \left( a_{21i1} a_{121j} + a_{21i2} a_{122j} \right),  \label{ZhangZhang2009(C5ij)} \\
   &\  \left( a_{12i1} a_{221}j + a_{12i2} a_{222j} \right) \notag \\
       &\ \quad \ p_{11}\left( a_{12i1} a_{121j} + a_{12i2} a_{122j} \right) + p_{12}\left( a_{22i1} a_{121j} + a_{22i2} a_{122j} \right). \label{ZhangZhang2009(C6ij)}
\end{align}

As we can see, there is a symmetry between the first three $C$-constraints ((\ref{ZhangZhang2009(C1ij)}), (\ref{ZhangZhang2009(C2ij)}), (\ref{ZhangZhang2009(C3ij)})) and the last three $C$-constraints ((\ref{ZhangZhang2009(C4ij)}), (\ref{ZhangZhang2009(C5ij)}), (\ref{ZhangZhang2009(C6ij)})). The reasoning above is the content of the following proposition.

\begin{proposition}[{\cite[Proposition 3.1]{ZhangZhang2009}}]
\begin{enumerate}
    \item [\rm (1)] Let $R_P[y_1, y_2;\sigma]$ be a right double extension and suppose the data $\{\Sigma, P, Q\}$ are defined as above corresponding to the $\Bbbk$-linear basis $\{x_1, x_2, y_1, y_2\}$. Then the six equations {\rm (}\ref{ZhangZhang2009(C1ij)}{\rm)}, {\rm (}\ref{ZhangZhang2009(C2ij)}{\rm)}, {\rm (}\ref{ZhangZhang2009(C3ij)}{\rm)}, {\rm (}\ref{ZhangZhang2009(C4ij)}{\rm)}, {\rm (}\ref{ZhangZhang2009(C5ij)}{\rm)} and {\rm (}\ref{ZhangZhang2009(C6ij)}{\rm)} are satisfied. Further, ${\rm det}\ \Sigma \neq 0$ if and only if $R_P[y_1, y_2;\sigma]$ is a double extension.
    \item [\rm (2)] Suppose a matrix $\Sigma$ as in   and parameter sets $P = (p_{12}, p_{11})$ and $Q = (q_{12}, q_{11})$ with $p_{12} q_{12} \neq 0$ are given. If the six equations hold and ${\rm det}\ \Sigma \neq 0$, then the six relations   define a double extension $R_P[y_1, y_2;\sigma]$.
\end{enumerate}
\end{proposition}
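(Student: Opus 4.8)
The plan is to derive everything from Proposition~\ref{Carvalhoetal2011Proposition1.5} specialised to the trimmed case $\delta = 0$, $\tau = \{0,0,0\}$, together with the elementary bookkeeping relating the matrices $\Sigma$ and $M$. For part~(1), suppose $B = R_P[y_1, y_2; \sigma]$ is a right double extension. By \cite[Lemma 1.7]{ZhangZhang2008} the map $\sigma$ is an algebra homomorphism $R \to M_2(R)$; since $x_2x_1 - q_{11}x_1^2 - q_{12}x_1x_2 = 0$ in $R$, applying each component $\sigma_{ij}$ gives \eqref{ZhangZhang2009(E3.0.4)}, and expanding both sides via \eqref{ZhangZhang2009(E3.0.3)} and equating the coefficients of the PBW monomials $x_1^2$, $x_1x_2$, $x_2^2$ yields exactly \eqref{ZhangZhang2009(C1ij)}, \eqref{ZhangZhang2009(C2ij)}, \eqref{ZhangZhang2009(C3ij)} for all $i, j$. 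For the remaining three, I would use that, being a right double extension, $B$ has DE-data satisfying the six relations \eqref{Carvalhoetal2011(1.III)}--\eqref{Carvalhoetal2011(1.VIII)} of Proposition~\ref{Carvalhoetal2011Proposition1.5}; substituting $\sigma_{i0} = \delta_i = 0$ and $\rho_k = 0$ collapses the last three to $0 = 0$ and reduces the first three to pure identities among the $\sigma_{ij}$. Evaluating those three on $x_i$, rewriting each composite $\sigma_{fg}\circ\sigma_{st}$ by \eqref{ZhangZhang2009(E3.0.5)}, and comparing the coefficients of $x_1$ and $x_2$, produces \eqref{ZhangZhang2009(C4ij)}, \eqref{ZhangZhang2009(C5ij)}, \eqref{ZhangZhang2009(C6ij)}.

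For the equivalence in part~(1), note first that $\Sigma$ is invertible if and only if $M$ is, since $M$ (see \eqref{ZhangZhang2009(E3.0.2)'}) is a fixed permutation of the entries of $\Sigma$. Then, under the running hypothesis $p_{12}q_{12}\neq 0$, a right double extension of $R$ is a double extension in the sense of Definition~\ref{DoubleOreDefinition}(b) precisely when the homomorphism $\sigma$ is invertible, by \cite[Lemma 1.10]{ZhangZhang2008}: invertibility of $\sigma$ is what makes $B$ free as a right $R$-module with the basis $\{y_2^iy_1^j\}$ and makes the mixing symmetric, $y_1R + y_2R + R = Ry_1 + Ry_2 + R$; and in the connected graded situation here it is encoded by $\det M \neq 0$, hence by $\det \Sigma \neq 0$. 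Combining the two observations gives the stated equivalence.

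For part~(2), I would reverse the argument. Starting from $\Sigma$ with $\det\Sigma \neq 0$, parameters $P$, $Q$ with $p_{12}q_{12}\neq 0$, and the six $C$-constraints, the identities \eqref{ZhangZhang2009(C1ij)}--\eqref{ZhangZhang2009(C3ij)} say precisely that the assignment \eqref{ZhangZhang2009(E3.0.1)} annihilates the unique defining relation of $R = \Bbbk_Q[x_1, x_2]$, hence extends from the free algebra on $x_1, x_2$ to a well-defined graded algebra homomorphism $\sigma\colon R \to M_2(R)$. With $\delta = 0$ trivially a $\sigma$-derivation and $\tau = \{0,0,0\}$, the data $(\sigma, 0, P, 0)$ meet the input requirements of Proposition~\ref{Carvalhoetal2011Proposition1.5}, and \eqref{ZhangZhang2009(C4ij)}--\eqref{ZhangZhang2009(C6ij)} are exactly the relations \eqref{Carvalhoetal2011(1.III)}--\eqref{Carvalhoetal2011(1.VIII)} — whose last three hold automatically since $\delta = 0$ and $\tau = \{0,0,0\}$ — evaluated on the degree-one generators $x_1, x_2$; since $R$ is generated in degree one and those relations are built solely from algebra homomorphisms, this forces them on all of $R$ (equivalently, the diamond-lemma ambiguities $y_2y_1x_s$ all resolve). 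Proposition~\ref{Carvalhoetal2011Proposition1.5} then produces a right double extension $R_P[y_1, y_2; \sigma]$, and $\det\Sigma\neq 0$ with $p_{12}\neq 0$ promotes it to a double extension by the criterion used in part~(1).

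I expect the genuine work to be concentrated in two \textquotedblleft globalisation\textquotedblright\ steps: that the degree-one constraints \eqref{ZhangZhang2009(C1ij)}--\eqref{ZhangZhang2009(C3ij)} already guarantee that $\sigma$ descends to the quadratic algebra $R$, and that \eqref{ZhangZhang2009(C4ij)}--\eqref{ZhangZhang2009(C6ij)} already guarantee the matrix relations \eqref{Carvalhoetal2011(1.III)}--\eqref{Carvalhoetal2011(1.VIII)} on all of $R$. Neither is deep — both rely on $R$ being quadratic with a single relation whose behaviour under $\sigma$ is fully pinned down by \eqref{ZhangZhang2009(C1ij)}--\eqref{ZhangZhang2009(C3ij)} — but both require careful and repeated use of the reordering $x_2x_1 = q_{11}x_1^2 + q_{12}x_1x_2$, and it is there that a direct computation (or a clean appeal to Bergman's diamond lemma, which is how Proposition~\ref{Carvalhoetal2011Proposition1.5} is proved) must be carried out. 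The remaining ingredient, the translation between $\det\Sigma\neq 0$ and the axioms distinguishing double extensions from right double extensions, is then routine from \cite[Lemmas 1.7 and 1.10]{ZhangZhang2008}.
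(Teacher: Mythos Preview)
Your proposal is correct and follows essentially the same approach as the paper: the paper explicitly says ``The reasoning above is the content of the following proposition,'' where ``the reasoning above'' is precisely the derivation of \eqref{ZhangZhang2009(C1ij)}--\eqref{ZhangZhang2009(C3ij)} from the homomorphism property of $\sigma$ applied to the relation of $R$, and of \eqref{ZhangZhang2009(C4ij)}--\eqref{ZhangZhang2009(C6ij)} from the relations \eqref{Carvalhoetal2011(1.III)}--\eqref{Carvalhoetal2011(1.VIII)} of Proposition~\ref{Carvalhoetal2011Proposition1.5} evaluated on $x_i$, exactly as you outline. The determinant criterion and the converse in part~(2) are handled by citation to \cite[Lemmas 1.7 and 1.10, Proposition 1.11]{ZhangZhang2008}, matching your appeal to those results.
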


The {\em System C} is the system of the six equations {\rm (}\ref{ZhangZhang2009(C1ij)}{\rm)}, {\rm (}\ref{ZhangZhang2009(C2ij)}{\rm)}, {\rm (}\ref{ZhangZhang2009(C3ij)}{\rm)}, {\rm (}\ref{ZhangZhang2009(C4ij)}{\rm)}, {\rm (}\ref{ZhangZhang2009(C5ij)}{\rm)} and {\rm (}\ref{ZhangZhang2009(C6ij)}{\rm)} together with ${\rm det}\  \Sigma \neq 0$. A {\em solution to System C} or a {\em C-solution} is a matrix $\Sigma$ with entries $a_{ijst}$ satisfying System C.

\begin{proposition}
Let $\Sigma$ be a C-solution and let $B = \left( \Bbbk_{Q}[x_1, x_2] \right)_P [y_1, y_2;\sigma]$ where $\sigma$ is determined by $\Sigma$. Let $0\neq h \in \Bbbk$.
\begin{enumerate}
    \item [\rm (1)] $B$ is a $\mathbb{Z}^2$-graded algebra. Let $\gamma: x_i \mapsto x_i,\ y_i \mapsto hy_i$. Then $\gamma$ extends to a graded automorphism of $B$.
    \item [\rm (2)] $h\Sigma$ is a $C$-solution. Let $\sigma'$ be the algebra automorphism determined by $h \Sigma$. Then $B':= \left( \Bbbk_Q[x_1, x_2] \right)_P[y_1, y_2;\sigma']$ is a graded twist of $B$ by $\gamma$ in the sense of Zhang \cite{Zhang1996}. 
\end{enumerate}
\end{proposition}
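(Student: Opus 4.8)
The plan is to work throughout with the presentation of $B$ by the generators $x_1,x_2,y_1,y_2$ subject to the four mixing relations (\ref{ZhangZhangMR11})--(\ref{ZhangZhangMR22}) and the two non-mixing relations (\ref{ZhangZhangNRx}), (\ref{ZhangZhangNRy}), and to keep track of how these relations behave when the $y_i$ are rescaled. For part~(1) I would first record that every one of the six defining relations of $B$ is bihomogeneous for the $\mathbb{Z}^2$-grading on the free algebra $\Bbbk\{x_1,x_2,y_1,y_2\}$ given by $\deg x_i=(1,0)$ and $\deg y_i=(0,1)$: relation (\ref{ZhangZhangNRx}) sits in bidegree $(2,0)$, relation (\ref{ZhangZhangNRy}) in bidegree $(0,2)$, and each of (\ref{ZhangZhangMR11})--(\ref{ZhangZhangMR22}) in bidegree $(1,1)$. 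Hence the defining ideal is bihomogeneous and $B$ is $\mathbb{Z}^2$-graded. Then $\gamma$ is nothing but the scaling map attached to the second grading component and the scalar $h$: on a homogeneous $b$ of bidegree $(a,n)$ one sets $\gamma(b)=h^{n}b$, and for homogeneous $b,b'$ of bidegrees $(a,n),(a',n')$ one has $\gamma(bb')=h^{n+n'}bb'=(h^{n}b)(h^{n'}b')=\gamma(b)\gamma(b')$; so $\gamma$ is a graded algebra endomorphism, it is invertible because $h\neq 0$ (inverse: $y_i\mapsto h^{-1}y_i$), and on generators it is $x_i\mapsto x_i$, $y_i\mapsto hy_i$. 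This settles (1).

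For part~(2), the claim that $h\Sigma$ is again a C-solution should follow immediately from the shape of System~C: examining (\ref{ZhangZhang2009(C1ij)})--(\ref{ZhangZhang2009(C6ij)}), every monomial on either side of each of the six equations is a product of exactly two of the scalars $a_{ijst}$, the coefficient being a monomial in $q_{11},q_{12},p_{11},p_{12}$ only. Thus each equation is homogeneous of degree $2$ in the entries of $\Sigma$, so replacing every $a_{ijst}$ by $ha_{ijst}$ multiplies both sides by $h^{2}$ and preserves validity; and $\det(h\Sigma)=h^{4}\det\Sigma\neq 0$. Hence $h\Sigma$ satisfies System~C, and by the proposition above (\cite[Proposition 3.1]{ZhangZhang2009}, part~(2)) the six relations built from $Q$, $P$ and $h\Sigma$ do define the double extension $B'=(\Bbbk_Q[x_1,x_2])_P[y_1,y_2;\sigma']$, with $\sigma'$ the automorphism determined by $h\Sigma$.

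For the twist assertion I would equip $B$ with the $\mathbb{Z}$-grading by $x$-degree (a coarsening of the bigrading from part~(1): $x_i$ in degree $1$, $y_i$ in degree $0$), observe that $\gamma$ is a graded automorphism for this grading as well, and form the twisting system $\tau=\{\tau_n\}$ with $\tau_n=\gamma^{n}$, which is a twisting system in the sense of Zhang \cite{Zhang1996} because $\gamma$ is a graded automorphism. A direct computation of the twisted products of the generators in $B^{\tau}$ then yields $x_i*x_j=x_ix_j$, $y_i*y_j=y_iy_j$, $x_s*y_t=x_sy_t$ and $y_i*x_j=h\,y_ix_j$; substituting these into (\ref{ZhangZhangNRx}), (\ref{ZhangZhangNRy}) and (\ref{ZhangZhangMR11})--(\ref{ZhangZhangMR22}) shows that $B^{\tau}$ is presented by the non-mixing relations with the \emph{same} $Q$ and $P$ together with the mixing relations whose coefficient matrix is $h\Sigma$. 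Since $B^{\tau}$ is again connected graded and generated by $x_1,x_2,y_1,y_2$, comparing presentations --- and, to be scrupulous, Hilbert series, both algebras being of type (14641) --- identifies $B^{\tau}$ with $B'$; that is, $B'$ is the graded twist of $B$ by $\gamma$.

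I expect the only delicate point to be the bookkeeping in this last step: one must choose the grading ($x$-degree, rather than $y$-degree or total degree) and pin down the correct variant and side of Zhang's twist so that the rescaling falls exactly on the mixing relations, with factor $h$ rather than $h^{-1}$, while leaving the parameters $P$ and $Q$ untouched. Everything else --- the bihomogeneity of the defining relations, the degree-$2$ homogeneity of System~C in the entries of $\Sigma$, and the automorphism property of $\gamma$ --- reduces to one-line verifications.
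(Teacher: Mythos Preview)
The paper does not supply its own proof of this proposition; it is recorded as background from \cite{ZhangZhang2009} (it is Lemma~3.3 there, as the subsequent Definition indicates by citing ``\cite[Lemma 3.3(b)]{ZhangZhang2009}''). So there is nothing in-paper to compare against, and one can only assess your argument on its own terms.

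Your proof is correct. The bihomogeneity observation in part~(1) and the degree-$2$ homogeneity of System~C in the entries $a_{ijst}$ in part~(2) are exactly the structural facts that make both claims routine; the determinant remark closes the C-solution verification. Your closing caveat about the twist bookkeeping is well placed and in fact necessary: with the $x$-degree grading and Zhang's \emph{left} twisting convention $a*b=a\,\tau_{|a|}(b)$ one lands on the coefficient matrix $h^{-1}\Sigma$ rather than $h\Sigma$, so to obtain $h\Sigma$ you must either take $\tau_n=\gamma^{-n}$ or pass to the right-twist convention $a*b=\tau_{|b|}(a)\,b$. Either choice is legitimately a twist by $\gamma$ in the sense of \cite{Zhang1996}, and your stated products $y_i*x_j=h\,y_ix_j$, $x_s*y_t=x_sy_t$, $x_i*x_j=x_ix_j$, $y_i*y_j=y_iy_j$ match the right-twist form; the relations then transform exactly as you say, leaving $P$ and $Q$ untouched. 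The Hilbert-series remark is a safe redundancy: a Zhang twist does not change the underlying graded vector space, so $B^{\tau}$ and $B'$ already coincide once their presentations do.
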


In general, the algebra $B$ and its twist $B^{\gamma}$ are not isomorphic as algebras. Nevertheless, both algebras have common properties since the category of graded $B$-mpdules is equivalent to the category of graded $B^{\gamma}$-modules \cite[Theorem 1.1]{Zhang1996}.

\begin{definition}[{\cite[Definition 3.4]{ZhangZhang2009}}]
\begin{enumerate}
    \item [\rm (i)] $\Sigma$ and $\Sigma'$ are {\em twist equivalent} if $\Sigma' = h\Sigma$ for some $0 \neq h \in \Bbbk$. In this case, $\Sigma'$ is called a {\em twist} of $\Sigma$. $\Sigma$ can be replace by its twists (without changing $Q$ and $P$) to obtain another double extension \cite[Lemma 3.3(b)]{ZhangZhang2009}.

    \item [\rm (ii)] $(\Sigma, Q, P)$ and $(\Sigma', Q', P')$ are {\em linearly equivalent} if there is a graded algebra isomorphism from $\left( \Bbbk_Q[x_1, x_2]\right)_P [y_1, y_2;\sigma]$ to $(\Bbbk_{Q'}[x_1', x_2'])_P' [y_1', y_2';\sigma']$ mapping $\Bbbk x_1 + \Bbbk x_2 \mapsto \Bbbk x_1' + \Bbbk x_2'$ and $\Bbbk y_1 + \Bbbk y_2 \mapsto \Bbbk y_1' + \Bbbk y_2'$. Using this isomorphism we can pull back $x_i'$ and $y_i'$ to the algebra $\left( \Bbbk_Q[x_1, x_2]\right)_P[y_1, y_2;\sigma]$; then we can assume that $\{x_1', x_2'\}$ (resp. $\{y_1', y_2'\}$) is another basis of $\{x_1, x_2\}$ (resp. $\{y_1, y_2\}$). In case $Q = Q'$ and $P = P'$, then we just say that $\Sigma$ and $\Sigma'$ are {\em linearly equivalent}.
    
    \item [\rm (iii)] $(\Sigma, Q, P)$ and $(\Sigma', Q', P')$ are {\em equivalent} if $(\Sigma, Q, P)$ and $(h \Sigma', Q', P')$ are linearly equivalent for some $0 \neq h\in \Bbbk$.
\end{enumerate}
\end{definition}

It is easy to show that twist equivalence and linear equivalence are equivalence relations. Also, the equivalence between  $(\Sigma, Q, P)$ and $(\Sigma', Q', P')$ is an equivalence relation.

Zhang and Zhang \cite{ZhangZhang2009} classified $R_P[y_1, y_2; \sigma]$ up to isomorphism (or even up to twist) by classifying $\Sigma$ up to (linear) equivalence.

\begin{proposition}\label{ZhangZhang2009Propositions3.6and3.7}
\begin{enumerate}
    \item [\rm (1)] \cite[Proposition 3.6]{ZhangZhang2009} Let $B$ be a double extension given by $(\Bbbk_Q[x_1, x_2])_P [y_1, y_2; \sigma, \tau, \delta]$.
\begin{enumerate}
    \item [\rm (a)] If $\Sigma_{12} = 0$, then $B$ is an iterated Ore extension.
    \item [\rm (b)] If $\Sigma_{21} = 0$ and $p_{11} = 0$, then $B$ is an iterated Ore extension.
\end{enumerate}

\item [\rm (2)] \cite[Proposition 3.7]{ZhangZhang2009} Let $B = \left( \Bbbk_Q[x_1, x_2] \right)_P [y_1, y_2;\sigma]$ be a trimmed double extension  where $\sigma$ is determined by the matrix $\Sigma$.
    \begin{enumerate}
    \item [\rm (a)] Considering $R': \Bbbk_P[y_1, y_2]$ as the subring and $\{x_1, x_2\}$ as the set of generators over $R'$, $B$ is a double extension $\left(\Bbbk_P[y_1, y_2]\right)_Q[x_1, x_2;\alpha]$ where $\alpha$ is determined by the matrix $M^{-1}$.
    
    \item [\rm (b)] If $M_{12} = 0$, then $B$ is an iterated Ore extension of $\Bbbk_P[y_1, y_2]$.
    
    \item [\rm (c)] If $M_{21} = 0$ and $q_{11} = 0$, then $B$ is an iterated Ore extension of $\Bbbk_P[y_1, y_2]$.
    \end{enumerate}
\end{enumerate}
\end{proposition}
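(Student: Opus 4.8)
The plan is to derive every item from the two structural theorems of Carvalho et al.\ recalled in Proposition \ref{Carvalhoetal2011Theorems2.2and2.4}, together with the ``transposition'' symmetry between the two triples of $C$-constraints noted after (\ref{ZhangZhang2009(C6ij)}); only part (2)(a) requires genuine work, everything else being a bookkeeping reduction to it.

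For part (1), I would first observe that since $\sigma$ is a \emph{graded} algebra homomorphism and $R=\Bbbk_Q[x_1,x_2]$ is generated in degree one by $x_1,x_2$, the hypothesis $\Sigma_{12}=0$ is equivalent to $\sigma_{12}=0$ on all of $R$, and $\Sigma_{21}=0$ to $\sigma_{21}=0$. Then (a) is immediate from Proposition \ref{Carvalhoetal2011Theorems2.2and2.4}(1)(a): a right double extension with $\sigma_{12}=0$ can be presented as an iterated Ore extension $R[y_1;\sigma_1,d_1][y_2;\sigma_2,d_2]$. For (b), the fact that $B$ is a double extension forces $p_{12}\neq 0$ by Definition \ref{DoubleOreDefinition}(b)(i), so $\sigma_{21}=0$, $p_{11}=0$ and $p_{12}\neq0$ together with Proposition \ref{Carvalhoetal2011Theorems2.2and2.4}(2) give $B=R[y_2;\sigma_2',d_2'][y_1;\sigma_1',d_1']$. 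No computation is needed.

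The heart of the matter is part (2)(a). The four mixing relations (\ref{ZhangZhangMR11})--(\ref{ZhangZhangMR22}) express each monomial $y_ix_s$ as a $\Bbbk$-linear combination of the monomials $x_ty_j$, and up to a fixed permutation of its rows the coefficient array is exactly the matrix $M$, the rearrangement of $\Sigma$ recalled above. Since $B$ is a double extension, $\det\Sigma\neq 0$ by \cite[Proposition 3.1]{ZhangZhang2009}, hence $\det M\neq 0$, so the four relations may be inverted to express each $x_ty_j$ as a $\Bbbk$-linear combination of the $y_ix_s$, and the resulting coefficient array is (up to the same permutation) $M^{-1}$. Reading the inverted identities as $x_tr'=\sum_j\alpha_{tj}(r')\,y_j$ for $r'\in R':=\Bbbk_P[y_1,y_2]$ \emph{defines} $\alpha$, whose associated $\Sigma$-type matrix is then $M^{-1}$ by construction. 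It remains to verify: (i) $B$ is free as a left $R$-module on $\{y_1^iy_2^j\}$ and $R$ is free over $\Bbbk$ on $\{x_1^ax_2^b\}$, so $B$ has $\Bbbk$-basis $\{x_1^ax_2^by_1^cy_2^d\}$, and a standard reordering using the (inverted) mixing relations then shows that $B$ is free as a left and as a right $R'$-module on $\{x_1^ix_2^j\}$ and $\{x_2^ix_1^j\}$ respectively; (ii) the analog of Definition \ref{DoubleOreDefinition}(a)(iv), namely $x_1R'+x_2R'+R'\subseteq R'x_1+R'x_2+R'$, which the inverted mixing relations supply, together with $x_2x_1=q_{12}x_1x_2+q_{11}x_1^2$ now playing the role of (\ref{Carvalhoetal2011(1.I)}) with parameter $Q$ and $y_2y_1=p_{12}y_1y_2+p_{11}y_1^2$ becoming a relation inside $R'$; and (iii) that $\alpha$ is an algebra homomorphism $R'\to M_2(R')$ satisfying the six constraints of Proposition \ref{Carvalhoetal2011Proposition1.5} for the pair $(R',Q)$. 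For (iii) I would invoke the symmetry noted after (\ref{ZhangZhang2009(C6ij)}): interchanging $(x_1,x_2,Q)$ with $(y_1,y_2,P)$ and replacing $\Sigma$ by $M^{-1}$ carries the first three $C$-constraints (\ref{ZhangZhang2009(C1ij)})--(\ref{ZhangZhang2009(C3ij)}) of $\sigma$ onto the right-double-extension constraints for $\alpha$ (those coupled to $Q$) and the last three (\ref{ZhangZhang2009(C4ij)})--(\ref{ZhangZhang2009(C6ij)}) onto the homomorphism constraints for $\alpha$ (those coupled to $P$); since $\Sigma$ is a $C$-solution and $\det M^{-1}\neq 0$, and $p_{12}q_{12}\neq 0$, \cite[Proposition 3.1]{ZhangZhang2009} then yields that $\bigl(\Bbbk_P[y_1,y_2]\bigr)_Q[x_1,x_2;\alpha]$ is a double extension, necessarily equal to $B$. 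The genuinely laborious --- and main --- obstacle is carrying this index bookkeeping out honestly: checking that the row-permutation relating $\Sigma$ to $M$ really intertwines the two triples of constraints, and treating the Manin plane case $Q=(q,0)$ and the Jordan plane case $Q=(1,1)$ uniformly (it is the $q_{11}$-terms that couple $x_1^2$ with $x_1x_2$ in (\ref{ZhangZhang2009(C1ij)})--(\ref{ZhangZhang2009(C3ij)})).

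Finally, parts (2)(b) and (2)(c) follow by feeding the dual presentation of (2)(a) into part (1). The inverse of a block-triangular matrix is block-triangular of the same type, the diagonal blocks being invertible here, so $M_{12}=0$ forces $(M^{-1})_{12}=0$; since $M^{-1}$ is the $\Sigma$-type matrix of $\alpha$, part (1)(a) applied to $\bigl(\Bbbk_P[y_1,y_2]\bigr)_Q[x_1,x_2;\alpha]$ presents $B$ as an iterated Ore extension of $\Bbbk_P[y_1,y_2]$. Likewise $M_{21}=0$ forces $(M^{-1})_{21}=0$; the parameter of the dual extension being $Q$, the extra hypothesis $q_{11}=0$ is exactly the ``$p_{11}=0$'' required by part (1)(b), which --- with $q_{12}\neq 0$ --- again presents $B$ as an iterated Ore extension of $\Bbbk_P[y_1,y_2]$.
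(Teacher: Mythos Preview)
The paper does not prove this proposition: it is stated with attribution to \cite[Propositions 3.6 and 3.7]{ZhangZhang2009} and used as input, so there is no ``paper's own proof'' to compare against. Your proposal is therefore being assessed on its own merits.

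Your argument is sound. Part (1) is exactly the right reduction: since $\sigma$ is an algebra homomorphism into $M_2(R)$, the vanishing of $\sigma_{12}$ (resp.\ $\sigma_{21}$) on the degree-one generators propagates multiplicatively via $\sigma_{12}(rr')=\sigma_{11}(r)\sigma_{12}(r')+\sigma_{12}(r)\sigma_{22}(r')$, so $\Sigma_{12}=0$ really forces $\sigma_{12}\equiv 0$, and then Proposition~\ref{Carvalhoetal2011Theorems2.2and2.4} applies verbatim. For (2)(a), inverting the mixing relations via the invertible matrix $M$ and reading off $\alpha$ from the inverted system is precisely how Zhang--Zhang argue; your appeal to the $\Sigma\leftrightarrow M$ index symmetry to transport the $C$-constraints is the correct mechanism, and the closing deduction of (2)(b),(c) from (2)(a) together with part (1) and block-triangularity of $M^{-1}$ is clean.

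One small sharpening for (2)(a): rather than separately verifying freeness and the six constraints for $\alpha$, you can bypass most of the bookkeeping by noting that the six defining relations (\ref{ZhangZhangMR11})--(\ref{ZhangZhangNRy}) are \emph{the same six relations} whether one reads $B$ as built over $\Bbbk_Q[x_1,x_2]$ or over $\Bbbk_P[y_1,y_2]$; the inverted mixing relations are literally the same equalities in $B$, so the only content is that the PBW basis $\{x_1^a x_2^b y_1^c y_2^d\}$ can be reordered to $\{y_1^c y_2^d x_1^a x_2^b\}$. Once that is checked, \cite[Proposition 3.1]{ZhangZhang2009} applied in the dual direction gives the $C$-solution property for $M^{-1}$ for free, and the two ``genuinely laborious'' case splits you flag (Manin vs.\ Jordan) disappear.
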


\begin{remark}\label{ConditionssatisfiedDOOE}
\begin{enumerate}
    \item In the classification of double extensions, there are some of them that satisfy some condition of Proposition \ref{ZhangZhang2009Propositions3.6and3.7}. In fact, Zhang and Zhang comment that the algebra $\mathbb{A}$ satisfies $M_{12}=0$, and therefore it is an iterated Ore extension. However, under the choice of $\tau$ and $\delta$, the extension is not an iterated Ore extension for any regular algebra of dimension $2$ \cite[p. 395]{ZhangZhang2009}.

By Proposition \ref{ZhangZhang2009Propositions3.6and3.7}, we can see at the Tables \ref{FirstTableDOE}, \ref{SecondTableDOE}, \ref{ThirdTableDOE} and \ref{FourthTableDOE}, the following facts:
\begin{itemize}
\item All algebras $\mathbb{A}, \mathbb{B}, \dotsc, \mathbb{Z}$ satisfy condition (2) (a).

\item Algebra $\mathbb{N}$ satisfies condition $(1)(\text{a})$ when $f=0$.
    
\item Condition $(2)(\text{b})$ is satisfied by algebras $\mathbb{D}$, $\mathbb{G}$, $\mathbb{H}$, $\mathbb{K}$, $\mathbb{L}$, $\mathbb{Q}$, $\mathbb{X}$ and $\mathbb{Y}$. 

\item Algebra $\mathbb{M}$ satisfies condition $(2)(\text{c})$ when $f=0$.

\item Algebra $\mathbb{V}$ also satisfies $(2)(\text{c})$.

\item The other algebras do not satisfy any of the assumptions (except (2)(a)) of Proposition \ref{ZhangZhang2009Propositions3.6and3.7}.
\end{itemize}
\end{enumerate}
\end{remark}

\section{Differential smoothness of algebras}\label{DefinitionsandpreliminariesDSA}

We follow Brzezi\'nski and Sitarz's presentation on differential smoothness carried out in \cite[Section 2]{BrzezinskiSitarz2017} (c.f. \cite{Brzezinski2008, Brzezinski2014}).

\begin{definition}[{\cite[Section 2.1]{BrzezinskiSitarz2017}}]
\begin{enumerate}
    \item [\rm (i)] A {\em differential graded algebra} is a non-negatively graded algebra $\Omega$ with the product denoted by $\wedge$ together with a degree-one linear map $d:\Omega^{\bullet} \to \Omega^{\bullet +1}$ that satisfies the graded Leibniz rule and is such that $d \circ d = 0$. 
    
    \item [\rm (ii)] A differential graded algebra $(\Omega, d)$ is a {\em calculus over an algebra} $A$ if $\Omega^0 A = A$ and $\Omega^n A = A\ dA \wedge dA \wedge \dotsb \wedge dA$ ($dA$ appears $n$-times) for all $n\in \mathbb{N}$ (this last is called the {\em density condition}). We write $(\Omega A, d)$ with $\Omega A = \bigoplus_{n\in \mathbb{N}} \Omega^{n}A$. By using the Leibniz rule, it follows that $\Omega^n A = dA \wedge dA \wedge \dotsb \wedge dA\ A$. A differential calculus $\Omega A$ is said to be {\em connected} if ${\rm ker}(d\mid_{\Omega^0 A}) = \Bbbk$.
    
    \item [\rm (iii)] A calculus $(\Omega A, d)$ is said to have {\em dimension} $n$ if $\Omega^n A\neq 0$ and $\Omega^m A = 0$ for all $m > n$. An $n$-dimensional calculus $\Omega A$ {\em admits a volume form} if $\Omega^n A$ is isomorphic to $A$ as a left and right $A$-module. 
\end{enumerate}
\end{definition}

The existence of a right $A$-module isomorphism means that there is a free generator, say $\omega$, of $\Omega^n A$ (as a right $A$-module), i.e. $\omega \in \Omega^n A$, such that all elements of $\Omega^n A$ can be uniquely expressed as $\omega a$ with $a \in A$. If $\omega$ is also a free generator of $\Omega^n A$ as a left $A$-module, this is said to be a {\em volume form} on $\Omega A$.

The right $A$-module isomorphism $\Omega^n A \to A$ corresponding to a volume form $\omega$ is denoted by $\pi_{\omega}$, i.e.
\begin{equation}\label{BrzezinskiSitarz2017(2.1)}
\pi_{\omega} (\omega a) = a, \quad {\rm for\ all}\ a\in A.
\end{equation}

By using that $\Omega^n A$ is also isomorphic to $A$ as a left $A$-module, any free generator $\omega $ induces an algebra endomorphism $\nu_{\omega}$ of $A$ by the formula
\begin{equation}\label{BrzezinskiSitarz2017(2.2)}
    a \omega = \omega \nu_{\omega} (a).
\end{equation}

Note that if $\omega$ is a volume form, then $\nu_{\omega}$ is an algebra automorphism.

Now, we proceed to recall the key ingredients of the {\em integral calculus} on $A$ as dual to its differential calculus. For more details, see Brzezinski et al. \cite{Brzezinski2008, BrzezinskiElKaoutitLomp2010}.

Let $(\Omega A, d)$ be a differential calculus on $A$. The space of $n$-forms $\Omega^n A$ is an $A$-bimodule. Consider $\mathcal{I}_{n}A$ the right dual of $\Omega^{n}A$, the space of all right $A$-linear maps $\Omega^{n}A\rightarrow A$, that is, $\mathcal{I}_{n}A := {\rm Hom}_{A}(\Omega^{n}(A),A)$. Notice that each of the $\mathcal{I}_{n}A$ is an $A$-bimodule with the actions
\begin{align*}
    (a\cdot\phi\cdot b)(\omega)=a\phi(b\omega),\quad {\rm for\ all}\ \phi \in \mathcal{I}_{n}A,\ \omega \in \Omega^{n}A\ {\rm and}\ a,b \in A.
\end{align*}

The direct sum of all the $\mathcal{I}_{n}A$, that is, $\mathcal{I}A = \bigoplus\limits_{n} \mathcal{I}_n A$, is a right $\Omega A$-module with action given by
\begin{align}\label{BrzezinskiSitarz2017(2.3)}
    (\phi\cdot\omega)(\omega')=\phi(\omega\wedge\omega'),\quad {\rm for\ all}\ \phi\in\mathcal{I}_{n + m}A, \ \omega\in \Omega^{n}A \ {\rm and} \ \omega' \in \Omega^{m}A.
\end{align}

\begin{definition}[{\cite[Definition 2.1]{Brzezinski2008}}]
A {\em divergence} (also called {\em hom-connection}) on $A$ is a linear map $\nabla: \mathcal{I}_1 A \to A$ such that
\begin{equation}\label{BrzezinskiSitarz2017(2.4)}
    \nabla(\phi \cdot a) = \nabla(\phi) a + \phi(da), \quad {\rm for\ all}\ \phi \in \mathcal{I}_1 A \ {\rm and} \ a \in A.
\end{equation}  
\end{definition}

Note that a divergence can be extended to the whole of $\mathcal{I}A$, 
\[
\nabla_n: \mathcal{I}_{n+1} A \to \mathcal{I}_{n} A,
\]

by considering
\begin{equation}\label{BrzezinskiSitarz2017(2.5)}
\nabla_n(\phi)(\omega) = \nabla(\phi \cdot \omega) + (-1)^{n+1} \phi(d \omega), \quad {\rm for\ all}\ \phi \in \mathcal{I}_{n+1}(A)\ {\rm and} \ \omega \in \Omega^n A.
\end{equation}

By putting together (\ref{BrzezinskiSitarz2017(2.4)}) and (\ref{BrzezinskiSitarz2017(2.5)}), we get the Leibniz rule 
\begin{equation}
    \nabla_n(\phi \cdot \omega) = \nabla_{m + n}(\phi) \cdot \omega + (-1)^{m + n} \phi \cdot d\omega,
\end{equation}

for all elements $\phi \in \mathcal{I}_{m + n + 1} A$ and $\omega \in \Omega^m A$ \cite[Lemma 3.2]{Brzezinski2008}. In the case $n = 0$, if ${\rm Hom}_A(A, M)$ is canonically identified with $M$, then $\nabla_0$ reduces to the classical Leibniz rule.

\begin{definition}[{\cite[Definition 3.4]{Brzezinski2008}}]
The right $A$-module map 
$$
F = \nabla_0 \circ \nabla_1: {\rm Hom}_A(\Omega^{2} A, M) \to M
$$ is called a {\em curvature} of a hom-connection $(M, \nabla_0)$. $(M, \nabla_0)$ is said to be {\em flat} if its curvature is the zero map, that is, if $\nabla \circ \nabla_1 = 0$. This condition implies that $\nabla_n \circ \nabla_{n+1} = 0$ for all $n\in \mathbb{N}$.
\end{definition}

$\mathcal{I} A$ together with the $\nabla_n$ form a chain complex called the {\em complex of integral forms} over $A$. The cokernel map of $\nabla$, that is, $\Lambda: A \to {\rm Coker} \nabla = A / {\rm Im} \nabla$ is said to be the {\em integral on $A$ associated to} $\mathcal{I}A$.

Given a left $A$-module $X$ with action $a\cdot x$, for all $a\in A,\ x \in X$, and an algebra automorphism $\nu$ of $A$, the notation $^{\nu}X$ stands for $X$ with the $A$-module structure twisted by $\nu$, i.e. with the $A$-action $a\otimes x \mapsto \nu(a)\cdot x $.

The following definition of an \textit{integrable differential calculus} seeks to portray a version of Hodge star isomorphisms between the complex of differential forms of a differentiable manifold and a complex of dual modules of it \cite[p. 112]{Brzezinski2015}. 

\begin{definition}[{\cite[Definition 2.1]{BrzezinskiSitarz2017}}]
An $n$-dimensional differential calculus $(\Omega A, d)$ is said to be {\em integrable} if $(\Omega A, d)$ admits a complex of integral forms $(\mathcal{I}A, \nabla)$ for which there exist an algebra automorphism $\nu$ of $A$ and $A$-bimodule isomorphisms \linebreak $\Theta_k: \Omega^{k} A \to ^{\nu} \mathcal{I}_{n-k}A$, $k = 0, \dotsc, n$, rendering commmutative the following diagram:
\[
{\large{
\begin{tikzcd}
A \arrow{r}{d} \arrow{d}{\Theta_0} & \Omega^{1} A \arrow{d}{\Theta_1} \arrow{r}{d} & \Omega^2 A  \arrow{d}{\Theta_2} \arrow{r}{d} & \dotsb \arrow{r}{d} & \Omega^{n-1} A \arrow{d}{\Theta_{n-1}} \arrow{r}{d} & \Omega^n A  \arrow{d}{\Theta_n} \\ ^{\nu} \mathcal{I}_n A \arrow[swap]{r}{\nabla_{n-1}} & ^{\nu} \mathcal{I}_{n-1} A \arrow[swap]{r}{\nabla_{n-2}} & ^{\nu} \mathcal{I}_{n-2} A \arrow[swap]{r}{\nabla_{n-3}} & \dotsb \arrow[swap]{r}{\nabla_{1}} & ^{\nu} \mathcal{I}_{1} A \arrow[swap]{r}{\nabla} & ^{\nu} A
\end{tikzcd}
}}
\]

The $n$-form $\omega:= \Theta_n^{-1}(1)\in \Omega^n A$ is called an {\em integrating volume form}. 
\end{definition}

The algebra of complex matrices $M_n(\mathbb{C})$ with the $n$-dimensional calculus generated by derivations presented by Dubois-Violette et al. \cite{DuboisViolette1988, DuboisVioletteKernerMadore1990}, the quantum group $SU_q(2)$ with the three-dimensional left covariant calculus developed by Woronowicz \cite{Woronowicz1987} and the quantum standard sphere with the restriction of the above calculus, are examples of algebras admitting integrable calculi. For more details on the subject, see Brzezi\'nski et al. \cite{BrzezinskiElKaoutitLomp2010}. 

The following proposition shows that the integrability of a differential calculus can be defined without explicit reference to integral forms. This allows us to guarantee the integrability by considering the existence of finitely generator elements that allow to determine left and right components of any homogeneous element of $\Omega(A)$.

\begin{proposition}[{\cite[Theorem 2.2]{BrzezinskiSitarz2017}}]\label{integrableequiva} 
Let $(\Omega A, d)$ be an $n$-dimensional differential calculus over an algebra $A$. The following assertions are equivalent:
\begin{enumerate}
    \item [\rm (1)] $(\Omega A, d)$ is an integrable differential calculus.
    
    \item [\rm (2)] There exists an algebra automorphism $\nu$ of $A$ and $A$-bimodule isomorphisms $\Theta_k : \Omega^k A \rightarrow \ ^{\nu}\mathcal{I}_{n-k}A$, $k =0, \ldots, n$, such that, for all $\omega'\in \Omega^k A$ and $\omega''\in \Omega^mA$,
    \begin{align*}
        \Theta_{k+m}(\omega'\wedge\omega'')=(-1)^{(n-1)m}\Theta_k(\omega')\cdot\omega''.
    \end{align*}
    
    \item [\rm (3)] There exists an algebra automorphism $\nu$ of $A$ and an $A$-bimodule map $\vartheta:\Omega^nA\rightarrow\ ^{\nu}A$ such that all left multiplication maps
    \begin{align*}
    \ell_{\vartheta}^{k}:\Omega^k A &\ \rightarrow \mathcal{I}_{n-k}A, \\
    \omega' &\ \mapsto \vartheta\cdot\omega', \quad k = 0, 1, \dotsc, n,
    \end{align*}
    where the actions $\cdot$ are defined by {\rm (}\ref{BrzezinskiSitarz2017(2.3)}{\rm )}, are bijective.
    
    \item [\rm (4)] $(\Omega A, d)$ has a volume form $\omega$ such that all left multiplication maps
    \begin{align*}
        \ell_{\pi_{\omega}}^{k}:\Omega^k A &\ \rightarrow \mathcal{I}_{n-k}A, \\
        \omega' &\ \mapsto \pi_{\omega} \cdot \omega', \quad k=0,1, \dotsc, n-1,
    \end{align*}
    
    where $\pi_{\omega}$ is defined by {\rm (}\ref{BrzezinskiSitarz2017(2.1)}{\rm )}, are bijective.
\end{enumerate}
\end{proposition}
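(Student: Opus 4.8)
The plan is to run the cycle of implications $(1)\Rightarrow(2)\Rightarrow(3)\Rightarrow(4)\Rightarrow(1)$, the point being that conditions $(2)$, $(3)$ and $(4)$ are essentially repackagings of one another — a volume form $\omega$ carries the same information as an $A$-bimodule isomorphism $\Omega^n A\to{}^{\nu}A$, namely $\pi_{\omega}$ — so the genuine content is the passage between integrability, which additionally supplies a divergence and a commuting complex, and the purely multiplicative conditions.

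For $(1)\Rightarrow(2)$: starting from the integrating data $\nu$, $(\mathcal I A,\nabla)$ and the commuting ladder, I would prove the identity $\Theta_{k+m}(\omega'\wedge\omega'')=(-1)^{(n-1)m}\Theta_k(\omega')\cdot\omega''$ by induction on $m$. The case $m=0$ is trivial; for the step, I would use the density condition to reduce to $\omega''$ of the form $a\,da_1\wedge\cdots\wedge da_m$, expand $\omega'\wedge\omega''$ by the Leibniz rule for $d$, and push everything through the commuting squares $\nabla_{n-j-1}\circ\Theta_j=\Theta_{j+1}\circ d$, using the Leibniz rule $\nabla_n(\phi\cdot\omega)=\nabla_{m+n}(\phi)\cdot\omega+(-1)^{m+n}\phi\cdot d\omega$ for the complex of integral forms. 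The only delicate part is matching the signs $(-1)^{(n-1)m}$.

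For $(2)\Rightarrow(3)\Rightarrow(4)$: put $\vartheta:=\Theta_n$, identifying $\mathcal I_0 A$ with $A$; evaluating the identity of $(2)$ on $\Omega^{n-k}A$ yields $\ell^k_{\vartheta}=(-1)^{(n-1)(n-k)}\Theta_k$, so each $\ell^k_{\vartheta}$ is bijective, and $\vartheta$ is an $A$-bimodule map into ${}^{\nu}A$ since $\Theta_n$ is. Then set $\omega:=\vartheta^{-1}(1)$; right $A$-linearity of $\vartheta$ gives $\vartheta(\omega a)=a$, so $\omega$ freely generates $\Omega^n A$ as a right $A$-module with $\pi_{\omega}=\vartheta$, while the twist relation $a\omega=\omega\nu(a)$ together with bijectivity of $\nu$ shows $\omega$ freely generates on the left as well; thus $\omega$ is a volume form, and $\ell^k_{\pi_{\omega}}=\ell^k_{\vartheta}$ are bijective.

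For $(4)\Rightarrow(1)$, the step I expect to be the main obstacle: take $\nu:=\nu_{\omega}$ as in (\ref{BrzezinskiSitarz2017(2.2)}) and set $\Theta_k(\omega'):=(-1)^{(n-1)(n-k)}\,\pi_{\omega}\cdot\omega'$. Associativity of the right $\Omega A$-action on $\mathcal I A$ immediately gives the multiplicativity of $(2)$; right $A$-linearity of $\Theta_k$ is associativity of $\wedge$, the twisted left $A$-linearity $\Theta_k(a\omega')=\nu(a)\cdot\Theta_k(\omega')$ comes from $a\omega=\omega\nu(a)$ and the definition of $\pi_{\omega}$, and bijectivity for $k=0,\dots,n-1$ is hypothesis $(4)$, while for $k=n$ one notes that $\ell^n_{\pi_{\omega}}$ is $\pi_{\omega}$ itself, a bimodule isomorphism $\Omega^n A\to{}^{\nu}A$ by the volume-form property. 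Next, define the divergence by transporting the de Rham differential through the top and bottom of the last square, $\nabla:=\Theta_n\circ d\circ\Theta_{n-1}^{-1}:\mathcal I_1 A\to A$, and verify (\ref{BrzezinskiSitarz2017(2.4)}) by writing $\phi=\Theta_{n-1}(\omega')$ and using the Leibniz rule for $d$ together with $\Theta_n(\omega'\wedge da)=(-1)^{n-1}\phi(da)$. The heart of the matter is then to check that the canonical extensions $\nabla_j$ of this $\nabla$, defined by the formula (\ref{BrzezinskiSitarz2017(2.5)}), agree with the transported maps $\Theta_{n-j}\circ d\circ\Theta_{n-j-1}^{-1}$; this is a sign-bookkeeping computation relying on the multiplicativity of $\Theta$, and once it is done the integrability ladder commutes by construction and flatness $\nabla_j\circ\nabla_{j+1}=0$ follows from $d\circ d=0$. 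Therefore $(\Omega A,d)$ is integrable, closing the cycle.
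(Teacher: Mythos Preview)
The paper does not prove this proposition: it is quoted verbatim as \cite[Theorem 2.2]{BrzezinskiSitarz2017} and used as a black box (notably in Example~\ref{DoubleExtensionsDSYorN} to conclude differential smoothness of $B^1$). There is therefore no proof in the present paper to compare your proposal against.

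That said, your cycle $(1)\Rightarrow(2)\Rightarrow(3)\Rightarrow(4)\Rightarrow(1)$ is the natural strategy and matches what one would expect in the original source. The steps $(2)\Rightarrow(3)\Rightarrow(4)$ are indeed straightforward repackagings as you say. For $(1)\Rightarrow(2)$ your inductive scheme via density and the two Leibniz rules is the right idea; the sign tracking is genuinely the only subtlety. For $(4)\Rightarrow(1)$ your outline is correct in spirit, and you have correctly identified the crux: verifying that the canonical extensions $\nabla_j$ defined by \eqref{BrzezinskiSitarz2017(2.5)} from the transported $\nabla$ coincide with $\Theta_{n-j}\circ d\circ\Theta_{n-j-1}^{-1}$. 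This is where one actually has to compute, and your proposal acknowledges but does not carry out that computation. If you want a complete proof you should write this step out explicitly; otherwise the argument is only a sketch at the one place where a reader might doubt it.
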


A volume form $\omega\in \Omega^nA$ is an {\em integrating form} if and only if it satisfies condition $(4)$ of Proposition \ref{integrableequiva} \cite[Remark 2.3]{BrzezinskiSitarz2017}.

The most interesting cases of differential calculi are those where $\Omega^k A$ are finitely generated and projective right or left (or both) $A$-modules \cite{Brzezinski2011}.

\begin{proposition}\label{BrzezinskiSitarz2017Lemmas2.6and2.7}
\begin{enumerate}
\item [\rm (1)] \cite[Lemma 2.6]{BrzezinskiSitarz2017} Consider $(\Omega A, d)$ an integrable and $n$-dimensional calculus over $A$ with integrating form $\omega$. Then $\Omega^{k} A$ is a finitely generated projective right $A$-module if there exist a finite number of forms $\omega_i \in \Omega^{k} A$ and $\overline{\omega}_i \in \Omega^{n-k} A$ such that, for all $\omega' \in \Omega^{k} A$, we have that 
\begin{equation*}
\omega' = \sum_{i} \omega_i \pi_{\omega} (\overline{\omega}_i \wedge \omega').
\end{equation*}

\item [\rm (2)] \cite[Lemma 2.7]{BrzezinskiSitarz2017} Let $(\Omega A, d)$ be an $n$-dimensional calculus over $A$ admitting a volume form $\omega$. Assume that for all $k = 1, \ldots, n-1$, there exists a finite number of forms $\omega_{i}^{k},\overline{\omega}_{i}^{k} \in \Omega^{k}(A)$ such that for all $\omega'\in \Omega^kA$, we have that
\begin{equation*}
\omega'=\displaystyle\sum_i\omega_{i}^{k}\pi_\omega(\overline{\omega}_{i}^{n-k}\wedge\omega')=\displaystyle\sum_i\nu_{\omega}^{-1}(\pi_\omega(\omega'\wedge\omega_{i}^{n-k}))\overline{\omega}_{i}^{k},
\end{equation*}

where $\pi_{\omega}$ and $\nu_{\omega}$ are defined by {\rm (}\ref{BrzezinskiSitarz2017(2.1)}{\rm )} and {\rm (}\ref{BrzezinskiSitarz2017(2.2)}{\rm )}, respectively. Then $\omega$ is an integral form and all the $\Omega^{k}A$ are finitely generated and projective as left and right $A$-modules.
\end{enumerate}
\end{proposition}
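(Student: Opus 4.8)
The plan is to derive both statements from the classical dual basis characterization of finitely generated projective modules, using as the only structural input the two elementary compatibilities of the right $A$-module isomorphism $\pi_\omega$ of (\ref{BrzezinskiSitarz2017(2.1)}) with the bimodule structure of $\Omega^n A$: namely $\pi_\omega(\eta a)=\pi_\omega(\eta)a$, which is immediate from (\ref{BrzezinskiSitarz2017(2.1)}), and $\pi_\omega(a\eta)=\nu_\omega(a)\pi_\omega(\eta)$ for $\eta\in\Omega^n A$ and $a\in A$, obtained by writing $\eta=\omega b$ and invoking (\ref{BrzezinskiSitarz2017(2.2)}). Throughout I would use that $\nu_\omega$ is an algebra automorphism, so $\nu_\omega^{-1}$ is again multiplicative.

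For part (1), I would set $\phi_i:=\pi_\omega(\overline{\omega}_i\wedge-)\colon\Omega^k A\to A$. Each $\phi_i$ lies in $\mathcal{I}_k A=\mathrm{Hom}_A(\Omega^k A,A)$, since $\overline{\omega}_i\wedge(\omega' a)=(\overline{\omega}_i\wedge\omega')a$ with $\overline{\omega}_i\wedge\omega'\in\Omega^n A$, so right $A$-linearity of $\pi_\omega$ yields $\phi_i(\omega' a)=\phi_i(\omega')a$. The hypothesis then reads $\omega'=\sum_i\omega_i\,\phi_i(\omega')$ for all $\omega'\in\Omega^k A$, which is exactly a finite dual basis $\{(\omega_i,\phi_i)\}$ for the right $A$-module $\Omega^k A$; hence $\Omega^k A$ is finitely generated projective on the right. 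Only the existence of the volume form is used here, not integrability per se.

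For part (2), the first displayed equality is precisely the hypothesis of part (1) with $\omega_i=\omega_i^k$ and $\overline{\omega}_i=\overline{\omega}_i^{n-k}$, so each $\Omega^k A$ with $1\le k\le n-1$ is finitely generated projective as a right module; the cases $k=0,n$ are trivial since $\Omega^0A=A$ and $\Omega^nA\cong A$. For the left structure I would use the second equality, introducing $\psi_i:=\nu_\omega^{-1}\bigl(\pi_\omega(-\wedge\omega_i^{n-k})\bigr)\colon\Omega^k A\to A$. Using $\pi_\omega(a\eta)=\nu_\omega(a)\pi_\omega(\eta)$ together with multiplicativity of $\nu_\omega^{-1}$, one checks $\psi_i(a\omega')=a\,\psi_i(\omega')$, so $\psi_i$ is left $A$-linear; the second equality then reads $\omega'=\sum_i\psi_i(\omega')\,\overline{\omega}_i^k$, a left dual basis, proving left projectivity.

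Finally, to conclude that $\omega$ is an integral form I would verify condition (4) of Proposition \ref{integrableequiva}, i.e. bijectivity of $\ell^k_{\pi_\omega}\colon\omega'\mapsto\pi_\omega\cdot\omega'$, where by (\ref{BrzezinskiSitarz2017(2.3)}) one has $(\pi_\omega\cdot\omega')(\eta)=\pi_\omega(\omega'\wedge\eta)$. Injectivity is immediate from the second equality: if $\pi_\omega(\omega'\wedge\eta)=0$ for all $\eta$, then the choice $\eta=\omega_i^{n-k}$ annihilates every coefficient and forces $\omega'=0$. For surjectivity I would exhibit the explicit inverse $r^k(\phi):=\sum_j\nu_\omega^{-1}\bigl(\phi(\omega_j^{n-k})\bigr)\,\overline{\omega}_j^k$; clearing the twist with $\pi_\omega(a\eta)=\nu_\omega(a)\pi_\omega(\eta)$ and then pulling scalars inside $\phi$ by its right $A$-linearity reduces $\pi_\omega(r^k(\phi)\wedge\eta)$ to $\phi\bigl(\sum_j\omega_j^{n-k}\pi_\omega(\overline{\omega}_j^{k}\wedge\eta)\bigr)$, whereupon the first equality applied in degree $n-k$ collapses the inner sum to $\eta$, giving $\ell^k_{\pi_\omega}(r^k(\phi))=\phi$; the endpoint $k=0$ follows directly since $\nu_\omega$ is an automorphism. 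The main obstacle is exactly this interlocking: matching the two wedge orderings and correctly commuting $\nu_\omega$ through $\pi_\omega$ so that the two expansions dovetail into mutually inverse maps. Once the two compatibility identities for $\pi_\omega$ are established, the remainder is careful bookkeeping of indices and the twist.
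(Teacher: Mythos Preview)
The paper does not actually prove this proposition: it is stated as a quotation of \cite[Lemmas 2.6 and 2.7]{BrzezinskiSitarz2017} and used as a black box in Example~\ref{DoubleExtensionsDSYorN}. So there is no ``paper's own proof'' to compare against.

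That said, your argument is correct and is essentially the proof one finds in the cited source. The reduction of part~(1) to the dual basis lemma via $\phi_i=\pi_\omega(\overline{\omega}_i\wedge-)$ is exactly right, and your treatment of part~(2) is sound: the twisted identity $\pi_\omega(a\eta)=\nu_\omega(a)\pi_\omega(\eta)$ is the key point, your left dual basis $\psi_i$ is correct, and your explicit inverse $r^k$ for $\ell^k_{\pi_\omega}$ works because the first hypothesis at degree $n-k$ (available for $1\le k\le n-1$) collapses the inner sum, while the boundary case $k=0$ is handled directly by the bijectivity of $\nu_\omega$. The bookkeeping of which degree of the hypothesis feeds into which direction of the bijectivity is done correctly.
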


Brzezi\'nski and Sitarz \cite[p. 421]{BrzezinskiSitarz2017} asserted that to connect the integrability of the differential graded algebra $(\Omega A, d)$ with the algebra $A$, it is necessary to relate the dimension of the differential calculus $\Omega A$ with that of $A$, and since we are dealing with algebras that are deformations of coordinate algebras of affine varieties, the {\em Gelfand-Kirillov dimension} introduced by Gelfand and Kirillov \cite{GelfandKirillov1966, GelfandKirillov1966b} seems to be the best suited. Briefly, given an affine $\Bbbk$-algebra $A$, the {\em Gelfand-Kirillov dimension of} $A$, denoted by ${\rm GKdim}(A)$, is given by
\[
{\rm GKdim}(A) := \underset{n\to \infty}{\rm lim\ sup} \frac{{\rm log}({\rm dim}\ V^{n})}{{\rm log}\ n},
\]

where $V$ is a finite-dimensional subspace of $A$ that generates $A$ as an algebra. This definition is independent of choice of $V$. If $A$ is not affine, then its Gelfand-Kirillov dimension is defined to be the supremum of the Gelfand-Kirillov dimensions of all affine subalgebras of $A$. An affine domain of Gelfand-Kirillov dimension zero is precisely a division ring that is finite-dimensional over its center. In the case of an affine domain of Gelfand-Kirillov dimension one over $\Bbbk$, this is precisely a finite module over its center, and thus polynomial identity. In some sense, this dimensions measures the deviation of the algebra $A$ from finite dimensionality. For more details about this dimension, see the excellent treatment developed by Krause and Lenagan \cite{KrauseLenagan2000}.

After preliminaries above, we arrive to the key notion of this paper.

\begin{definition}[{\cite[Definition 2.4]{BrzezinskiSitarz2017}}]\label{BrzezinskiSitarz2017Definition2.4}
An affine algebra $A$ with integer Gelfand-Kirillov dimension $n$ is said to be {\em differentially smooth} if it admits an $n$-dimensional connected integrable differential calculus $(\Omega A, d)$.
\end{definition}

From Definition \ref{BrzezinskiSitarz2017Definition2.4} it follows that a differentially smooth algebra comes equipped with a well-behaved differential structure and with the precise concept of integration \cite[p. 2414]{BrzezinskiLomp2018}.

\begin{example}\label{Brzezinski2015ExamplesOE}
Several examples of noncommutative algebras have been proved to be differentially smooth. Let us briefly see at some of these that are related to Ore extensions.
\begin{enumerate}
\item [\rm (i)] The polynomial algebra $\Bbbk[x_1, \dotsc, x_n]$ has Gelfand-Kirillov dimension $n$ and the usual exterior algebra is an $n$-dimensional integrable calculus, which guarantees that $\Bbbk[x_1, \dotsc, x_n]$ is differentially smooth. 

\item [\rm (ii)] Related to Ore extensions, and under some mild and geometrically natural assumptions, Brzezi\'nski and Lomp proved that if $R$ and $S$ are algebras with integrable calculi $(\Omega R, d_R)$ and $(\Omega S, d_S)$, $\Omega R$ is a finitely generated projective right $R$-module and $\Omega$ is a finitely generated projective right $S$-module, then $(\Omega R \otimes \Omega S, d)$ is an integrable differential calculus for $R\otimes S$ \cite[Proposition 3.1]{BrzezinskiLomp2018}. With this, if $R$ and $S$ are differentially smooth algebras with respect to calculi which are finitely generated projective as right modules ${\rm GKdim}(R \otimes S) = {\rm GKdim}(R) + {\rm GKdim}(S)$, then the tensor product algebra $R\otimes S$ is differentially smooth \cite[Corollary 3.2]{BrzezinskiLomp2018}.

They also proved that if $R$ is an algebra with an integrable differential calculus $(\Omega R, d)$ such that $\Omega R$ is a finitely generated right $R$-module, for any automorphism $\sigma$ of $R$ that extends to a degree-preserving automorphism of $\Omega R$, which commutes with $d$, there exists an integrable differential calculus $(\Omega A, )$ on the skew polynomial ring $R[x;\sigma]$ and the Laurent skew polynomial ring $R[x^{\pm 1}; \sigma]$. If $R$ is differentially smooth with respect to $(\Omega R, d)$ and ${\rm GKdim}(A) = {\rm GKdim}(R) + 1$, then $A$ is also differentially smooth \cite[Theorem 4.1]{BrzezinskiLomp2018}. The following examples illustrates this situation.

For any non-zero $q\in \Bbbk^{*}$, let $A_q$ be the algebra generated by the indeterminates $x, y, z$ and relations $xy = yx,\ xz = qzy, \ yz = zx$, and let $B_q$ be the algebra generated by $x, y$ and invertible $z$ subject to the same relations. Since $A_q = \Bbbk[x, y][z;\sigma]$ and $B_q = \Bbbk[x, y][z^{\pm 1};\sigma]$ with $\sigma(x) = y$ and $\sigma(y) = qx$, then $A_q$ and $B_q$ are differentially smooth \cite[Example 4.6]{BrzezinskiLomp2018}. Let us see the details.

Note that the polynomial algebra $\Bbbk[x, y]$ is differentially smooth with the usual commutative differential calculus $\Omega (\Bbbk[x, y])$, i.e.,
\begin{align*}
       x dx = &\ dx x, & x dy = &\ dy x, & y dx = &\ dx y, \\ 
       y dy = &\ dy y, & dx dy = &\ - dy dx, & (dx)^2 = &\ (dy)^2 = 0. 
\end{align*}

The automorphism $\sigma$ extends to an automorphism of $\Omega (\Bbbk[x, y])$ by requesting it commute with $d$, that is, $\sigma(dx) = dy$ and $\sigma(dy) = q dx$.

Note that $\Omega (\Bbbk[x, y])$ is finitely generated as a right $\Bbbk[x, y]$-module and
\[
{\rm GKdim}(A_q) = {\rm GKdim}(B_q) = 3 = {\rm GKdim}(\Bbbk[x, y]) + 1,
\]

whence $A_q$ and $B_q$ are differentially smooth.

By using a similar reasoning \cite[Corollary 4.9]{BrzezinskiLomp2018}, it can be seen that the coordinate ring of the so called {\em quantum affine} $n$-{\em space}, that is, the algebra generated by the indeterminates $x_1, \dotsc, x_n$ subject to the relations $x_j x_i = q_{ij} x_i x_j$ for all $i < j$ with $q_{ij} \in \Bbbk^{*}$, is differentially smooth (c.f. \cite[Corollary 6 and Theorem 9]{KaracuhaLomp2014}).

\item [\rm (iii)] Brzezi{\'n}ski \cite{Brzezinski2015} characterized the differential smoothness of skew polynomial rings of the form $\Bbbk[t][x; \sigma_{q, r}, \delta_{p(t)}]$ where $\sigma_{q, r}(t) = qt + r$, with $q, r \in \Bbbk,\ q\neq 0$, and the $\sigma_{q, r}-$derivation $\delta_{p(t)}$ is defined as
\[
\delta_{p(t)} (f(t)) = \frac{f(\sigma_{q, r}(t)) - f(t)}{\sigma_{q, r}(t) - t} p(t),
\]

for an element $p(t) \in \Bbbk[t]$. $\delta_{p(t)}(f(t))$ is a suitable limit when $q = 1$ and $r = 0$, that is, when $\sigma_{q, r}$ is the identity map of $\Bbbk[t]$. In this way, the defining relation of the Ore extension $\Bbbk[t][x; \sigma_{q, r}, \delta_{p(t)}]$ is given by
\[
xt  = qtx + rx + p(t).
\]

For the maps
\begin{equation}\label{Brzezinski2015(3.4)}
\nu_t(t) = t,\quad \nu_t(x) = qx + p'(t)\quad {\rm and}\quad \nu_x(t) = \sigma_{q, r}^{-1}(t),\quad \nu_x(x) = x,
\end{equation}

where $p'(t)$ is the classical $t$-derivative of $p(t)$, Brzezi{\'n}ski \cite[Lemma 3.1]{Brzezinski2015} showed that all of them simultaneously extend to algebra automorphisms $\nu_t$ and $\nu_x$ of $\Bbbk[t][x; \sigma_{q, r}, \delta_{p(t)}]$ only in the following three cases:
    \begin{enumerate}
        \item [\rm (a)] $q = 1, r = 0$ with no restriction on $p(t)$;
        
        \item [\rm (b)] $q = 1, r\neq 0$ and $p(t) = c$, $c\in \Bbbk$;
        
        \item [\rm (c)] $q\neq 1, p(t) = c\left( t + \frac{r}{q-1} \right)$, $c\in \Bbbk$ with no restriction on $r$.
    \end{enumerate}
    
In any of the cases {\rm (a) - (c)} we have that $\nu_x \circ \nu_t = \nu_t \circ \nu_x$. If the Ore extension $\Bbbk[t][x; \sigma_{q, r}, \delta_{p(t)}]$ satisfies one of these three conditions, then it is differentially smooth \cite[Proposition 3.3]{Brzezinski2015}.

Due to Brzezi{\'n}ski's result on the differential smoothness of $\Bbbk[t][x; \sigma_{q, r}, \delta_{p(t)}]$, we get that the algebras 
\begin{itemize}
\item The {\em polynomial commutative ring} $\Bbbk[x_1, x_2]$;
        
\item The {\em Weyl algebra} $A_1(\Bbbk) = \Bbbk\{x_1, x_2\} / \langle x_1x_2 - x_2x_1 - 1\rangle$;
        
\item The {\em universal enveloping algebra of the Lie algebra} $\mathfrak{n}_2 = \langle x_1, x_2\mid [x_2, x_1] = x_1\rangle$, that is, $U(\mathfrak{n}_2) = \Bbbk\{x_1, x_2\} / \langle x_2x_1 - x_1x_2 - x_1\rangle$, and
        
\item {\em Manin's plane} $\mathcal{O}_q(\Bbbk) = \Bbbk \{x_1, x_2\} / \langle x_2 x_1 - qx_1 x_2\rangle$, where $q\in \Bbbk\ \backslash\ \{0,1\}$, and

\item {\em Jordan's plane} $\mathcal{J}(\Bbbk)$ with the relation given by $xt = tx + t^2$, 
\end{itemize}

are all differentially smooth.

\item [\rm (iv)] Brzezi\'nski et al. \cite{BrzezinskiElKaoutitLomp2010} proved that the coordinate algebras of the quantum group $SU_q(2)$, the standard quantum Podle\'s and the quantum Manin plane are differentially smooth.
\end{enumerate}
\end{example}

\begin{remark}
As expected, there are examples of algebras that are not differentially smooth. Consider the commutative algebra $A = \mathbb{C}[x, y] / \langle xy \rangle$. A proof by contradiction shows that for this algebra there are no one-dimensional connected integrable calculi over $A$, so it cannot be differentially smooth \cite[Example 2.5]{BrzezinskiSitarz2017}.
\end{remark}

\section{Differential smoothness of regular algebras of type (14641)}\label{DICDOE}

This section contains the important result of the paper.

\begin{theorem}\label{DifferentiallySmoothDOE}
None double extension regular algebra of type {\rm (}14641{\rm )} is differentially smooth. 
\end{theorem}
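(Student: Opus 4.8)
The plan is to show that none of the 26 families $\mathbb{A},\dotsc,\mathbb{Z}$ (together with their possible nontrivial $\delta,\tau$) admits an $n$-dimensional connected integrable differential calculus, where $n = \operatorname{GKdim} B = 4$. The first step is to pin down $n$: each algebra in $\mathcal{LIST}$ is a double extension of an Artin–Schelter regular algebra of dimension two over a polynomial-type base, hence is a Noetherian domain that is a deformation of $\Bbbk[x_1,x_2,y_1,y_2]$, so $\operatorname{GKdim} B = 4$; this is immediate from the PBW basis $\{x_1^ix_2^jy_1^ky_2^l\}$. Thus differential smoothness would require a $4$-dimensional connected integrable calculus $(\Omega B,d)$.

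Next I would set up the candidate calculus. Since $B$ is connected graded and quadratic (indeed Koszul, by the cited proposition), the natural first-order calculus is the one with $\Omega^1 B$ free of rank $4$ on $dx_1,dx_2,dy_1,dy_2$, with commutation relations between generators and their differentials dictated by applying $d$ to the defining relations of $B$. The key structural observation — and this is where the argument really lives — is that for a connected graded algebra the \emph{only} candidate for an integrating volume form $\omega$ lives in $\Omega^4 B$, and the associated automorphism $\nu_\omega$ defined by $a\omega = \omega\nu_\omega(a)$ must be a \emph{graded} algebra automorphism. One then computes, relation by relation, the constraints that integrability (via Proposition \ref{integrableequiva}(4): all left-multiplication maps $\ell^k_{\pi_\omega}$ bijective) imposes on the exponents appearing in the commutation rules between $\omega$ and the generators. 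The strategy is to show these constraints are \emph{inconsistent}: typically, the top form $\omega = dx_1\wedge dx_2\wedge dy_1\wedge dy_2$ forces $\nu_\omega$ to scale $x_1$ and $x_2$ (resp. $y_1,y_2$) by factors that are products of the structure constants $q_{ij},p_{ij},a_{ijst}$, and the requirement that $\Omega^2 B$ be free of the correct rank on both sides — equivalently that the middle left-multiplication map be bijective — fails because the relevant "Jacobian"/determinant of structure constants vanishes or because $\nu_\omega$ cannot be made an automorphism (e.g. when a generator would have to be sent to a non-homogeneous element, as in the $B^2$-type phenomenon where $\sigma_{11},\sigma_{22}$ are not algebra homomorphisms).

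Concretely I would organize the case analysis using Remark \ref{ConditionssatisfiedDOOE}: the families split according to which of the conditions in Proposition \ref{ZhangZhang2009Propositions3.6and3.7} they satisfy, i.e. which of $\Sigma_{12},\Sigma_{21},M_{12},M_{21}$ vanish, and this controls the shape of the mixing relations (\ref{ZhangZhangMR11})--(\ref{ZhangZhangMR22}). For the "iterated Ore extension" subfamilies one can invoke Brzeziński–Lomp's criterion (Example \ref{Brzezinski2015ExamplesOE}(ii)--(iii)): differential smoothness of a skew polynomial ring $R[x;\sigma,\delta]$ requires $\sigma$ (together with $\delta$) to extend to commuting automorphisms $\nu_t,\nu_x$, and one checks that the specific $\sigma,\delta,\tau$ in each table entry violates the three allowed cases (a)--(c), usually because $p_{11}=1$ or a tail term of degree $2$ obstructs the needed extension, or because $\nu_\omega$ would fail to be invertible. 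For the genuinely non-Ore families (those satisfying only (2)(a)) I would argue directly at the level of $\Omega^1 B$: the twisted relations among $dx_i,dy_j$ force an overdetermined linear system on the putative entries of $\nu_\omega$ which has no solution in the automorphism group.

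The main obstacle I anticipate is the sheer bookkeeping: there are 26 families, each with several parameters and several of which have sub-cases (e.g. $\mathbb{M},\mathbb{N}$ with $f=0$ vs. $f\neq 0$), and for each one must exhibit a concrete numerical/structural obstruction to integrability rather than merely to one particular calculus — so one must also rule out \emph{exotic} first-order calculi, not just the "diagonal" one. The cleanest way to handle the genericity is to prove a \emph{single lemma} first: for a connected graded $4$-generated Koszul algebra $B$ with a PBW basis, if a $4$-dimensional connected integrable calculus exists then the restriction of $\nu_\omega$ to $B_1 = \Bbbk x_1 + \Bbbk x_2 + \Bbbk y_1 + \Bbbk y_2$ is forced (up to scalar) by the structure constants, and integrability is equivalent to a finite list of polynomial identities in $q_{ij},p_{ij},a_{ijst}$ plus the automorphism condition $\det \nu_\omega|_{B_1}\neq 0$ being compatible with the relations. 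Granting that reduction, the theorem becomes a finite check against Tables \ref{FirstTableDOE}--\ref{FourthTableDOE}, which I would present family-by-family, flagging in each case the specific vanishing determinant or non-automorphism that kills integrability.
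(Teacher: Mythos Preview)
Your plan is considerably more elaborate than what the paper actually does, and it carries at least one genuine gap.

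\medskip

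\textbf{What the paper does.} The paper's proof is short and uniform: it never computes $\nu_\omega$, never looks at $\Omega^2 B$ or $\Omega^4 B$, and does essentially no family-by-family work. Instead it proves two elementary lemmas at the level of $\Omega^1 B$. Assume the calculus has the expected ``twisted'' structure, i.e.\ for each generator $z$ there is an automorphism $\nu_z$ with $a\,dz = dz\,\nu_z(a)$. Then:
\begin{itemize}
  \item[(1)] If $B$ has a relation of the shape $z'_{+}z_{+} = \alpha z_{+}z'_{+} + \beta z_{-}z'_{+} + \gamma z_{+}z'_{-} + \delta z_{-}z'_{-}$ with all four coefficients $\alpha,\beta,\gamma,\delta$ nonzero, one applies $d$, pushes everything to the right of the differentials using the $\nu_z$'s, and reads off from the $dz_{-}$-component the identity $\beta z'_{+} + \delta z'_{-} = 0$ in $B$. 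This is impossible.
  \item[(2)] If $B$ has a quadratic relation $\sum c_{t,w}\,tw = 0$ in which some generator $z_{+}$ occurs in exactly one ordered pair $(\bar t,\bar w)$, then applying $d$ and isolating the $dz_{+}$-component yields either $\bar w = 0$ or $\nu_{\bar w}(\bar t) = 0$, again impossible.
\end{itemize}
A single glance at Tables~\ref{FirstTableDOE}--\ref{FourthTableDOE} then shows that every algebra $\mathbb{A},\dotsc,\mathbb{Z}$ has at least one relation of type~(1) or type~(2). That is the whole argument.

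\medskip

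\textbf{Where your plan diverges and where it breaks.} Your approach is not wrong in principle, but it misses the two observations above and therefore commits you to a large case analysis that you do not actually carry out. More seriously, for the ``iterated Ore extension'' subfamilies you propose to invoke the Brzezi\'nski--Lomp criterion (Example~\ref{Brzezinski2015ExamplesOE}(ii)--(iii)) to rule out smoothness. That criterion gives \emph{sufficient} conditions for differential smoothness of $R[x;\sigma,\delta]$; it does not say that failure of the conditions implies non-smoothness. So concluding ``the specific $\sigma,\delta,\tau$ violates cases (a)--(c), hence $B$ is not differentially smooth'' is a logical gap. Likewise, your proposed ``single lemma'' that would reduce integrability to a finite list of polynomial identities in the structure constants is asserted but not proved, and proving it is essentially where all the content lies.

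\medskip

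In short: the paper avoids your entire programme by noticing that applying $d$ to a single defining relation already kills the existence of any first-order calculus of the required type. You should look for such a relation in each family rather than trying to analyse the top-degree behaviour of a hypothetical calculus.
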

\begin{proof}
Consider a double extension $B$ generated by the set of indeterminates $B := \{ z_{+}, z_{-}, z'_{+}, z'_{-}\}$. The proof is by contradiction. We divide it into two parts.  
\begin{itemize}
    \item [\rm (1)] Suppose that for the elements $z'_{+}$ and $z_{+}$ the commutation rule 
\begin{align}\label{firstDO}
    z'_{+}z_{+} = \alpha z_{+}z'_{+} +\beta z_{-}z'_{+}+\gamma z_{+}z'_{-} +\delta z_{-}z'_{-},\quad {\rm where}\ \alpha,\beta,\gamma, \delta \in \Bbbk^{\ast}, 
\end{align}

is satisfied. Suppose that $B$ has a first order differential calculus $\Omega A$ with $d: A \rightarrow \Omega^1A$ a derivation. By applying $d$ to (\ref{firstDO}) we get that 
\[
0 = d(z'_{+}z_{+}) - d(\alpha z_{+}z'_{+} + \beta z_{-} z'_{+} + \gamma z_{+}z'_{-} + \delta z_{-}z'_{-})
\]

Since $d$ is $\Bbbk$-linear, using the Leibniz's rule it follows that
\begin{align*}
0 = &\ dz'_{+} z_{+} + z'_{+} dz_{+} - \alpha dz_{+} z'_{+} -\alpha z_{+} dz'_{+} - \beta dz_{-}z'_{+} \\
    & \ - \beta z_{-}dz'_{+} - \gamma dz_{+}z'_{-} - \gamma z_{+}dz'_{-} - \delta dz_{-}z'_{-} - \delta z_{-}dz'_{-}
\end{align*}

Using Property \ref{BrzezinskiSitarz2017(2.2)}, the action of the module is written using automorphisms $\nu_{z_+}$, $\nu_{z'_+}$ and $\nu_{z'_{-}}$
\begin{align*}
    0 & = dz'_{+}z_{+}+dz_{+}\nu_{z_{+}}(z'_{+})-\alpha dz_{+}z'_{+}-\alpha dz'_{+}\nu_{z'_{+}}(z_{+})-\beta dz_{-}z'_{+}\\
    & \ \ - \beta dz'_{+}\nu_{z'_{+}}(z_{-})-\gamma dz_{+}z'_{-}-\gamma dz'_{-}\nu_{z'_{-}}(z_{+})-\delta dz_{-}z'_{-}-\delta dz'_{-}\nu_{z'_{-}}(z_{-})
\end{align*}

The terms that multiply the different differentials are put together to obtain
\begin{align*}
    0 &\ =dz'_{+}(z_{+}-\alpha \nu_{z'_{+}}(z_{+})-\beta \nu_{z'_{+}}(z_{-}))+dz_{+}(\nu_{z_{+}}(z'_{+})-\alpha z'_{+}-\gamma z'_{-})\\
    &\ \ - dz_{-}(\beta z'_{+}+\delta z'_{-})-dz'_{-}(\gamma \nu_{z'_{-}}(z_{+})+\delta \nu_{z'_{-}}(z_{-})).
\end{align*}
 
Since the differentials are generating elements for $\Omega^1A$ and all of them are equal to zero, the elements that multiply them are also zero. We obtain that 
\begin{align*}
    \nu_{z'_{+}}(\alpha z_{+}+\beta z_{-}) = &\ z_{+}, \\
    \nu_{z_{+}}(z'_{+}) = &\ \alpha z'_{+}+\gamma z'_{-}, \\
    \beta z'_{+}+\delta z'_{-} = &\ 0, \quad {\rm and} \\
    \nu_{z'_{-}}(\gamma z_{+}+\delta z_{-}) = &\ 0,
\end{align*}

which are satisfied only when $\beta = \gamma = \delta = 0$. This contradicts our initial assumption.

\item [\rm (2)] Suppose that in the algebra $B$ is satisfied the quadratic relation 
\begin{equation}\label{QuadraticrelationDSDOE}
\sum_{t,w\in B } c_{t,w} tw = 0,\quad {\rm where}\ c_{t, w} \in \Bbbk,
\end{equation}

Without loss of generality, notice that if the indeterminate $z_{+}$ appears only once in the list of pairs $(t,w)$, call $(\bar{t},\bar{w})$, then there is no first order calculus. More exactly, consider the set $B'=B^{2}\setminus \{(\bar{t},\bar{w})\}$. By applying the differential $d$ to the expression (\ref{QuadraticrelationDSDOE}) we get that
\begin{align*}
   0 & =d\left(\sum_{t,w\in B } c_{t,k} tk\right) 
\end{align*}
Since $d$ is $\Bbbk$-linear, using the Leibniz's rule and emphasizing leaving aside the terms $\bar{t}\bar{w}$, it follows that
\begin{align*}
   0 & = \sum_{(t,w)\in B'} c_{t,w} d(tw) + c_{\bar{t},\bar{w}}d(\bar{t}\bar{w})\\
    & = \sum_{(t,w)\in B'} c_{t,w} (dtw+tdw)+c_{\bar{t},\bar{w}} (d\bar{t}\bar{w}+\bar{t}d\bar{w})
\end{align*}
Again, using Property \ref{BrzezinskiSitarz2017(2.2)}, the action of the module is written using automorphisms $\nu_{w}$, and $\nu_{\bar{w}}$
\begin{align*}
    0 & = \left(\sum_{(t,w)\in B'} c_{t,w} (dtw+dw\nu_{w}(t))\right) + d\bar{t}c_{\bar{t},\bar{w}}\bar{w}+d\bar{w}c_{\bar{t},\bar{w}}\nu_{\bar{w}}(\bar{t}).
\end{align*}
The generator $z_{+}$ does not appear as any component of the pairs $(t,w)\in B'$. In the case that $\bar{t} = z_{+}$, we obtain that $c_{\bar{t},\bar{w}}\bar{w} = 0$, whence $\bar{w} = 0$, which cannot be happen. Now, if $\bar{w} = z_{+}$, then $c_{\bar{t},\bar{w}}\nu_{\bar{w}}(\bar{t})=0$, and therefore $\nu_{\bar{w}}(\bar{t})=0$, which contradicts that $\nu_{z_{+}}$ is an automorphism. 
\end{itemize}
\end{proof}

From Remark \ref{ConditionssatisfiedDOOE} and Theorem \ref{DifferentiallySmoothDOE} we obtain the following facts:
\begin{itemize}
    \item Double extensions satisfying relation {\rm (}\ref{firstDO}{\rm )} are $\mathbb{C}$, $\mathbb{F}$, $\mathbb{I}$, $\mathbb{S}$, $\mathbb{T}$, $\mathbb{U}$ (see Tables \ref{FirstTableDOE}, \ref{SecondTableDOE}, \ref{ThirdTableDOE} and \ref{FourthTableDOE}).
    \item Double extensions that satisfy the commutation rule (\ref{QuadraticrelationDSDOE}) are $\mathbb{A}$, $\mathbb{B}$, $\mathbb{D}$, $\mathbb{E}$, $\mathbb{G}$, $\mathbb{H}$, $\mathbb{J}$, $\mathbb{K}$, $\mathbb{L}$, $\mathbb{M}$, $\mathbb{N}$, $\mathbb{O}$, $\mathbb{P}$, $\mathbb{Q}$, $\mathbb{R}$, $\mathbb{V}$, $\mathbb{W}$, $\mathbb{X}$, $\mathbb{Y}$ and $\mathbb{Z}$ (see Tables \ref{FirstTableDOE}, \ref{SecondTableDOE}, \ref{ThirdTableDOE} and \ref{FourthTableDOE}).
\end{itemize}

Let us see three illustrative examples on the differential smoothness of double extensions of differentially smooth algebras. These allow us to give an answer to the following question:

\begin{question}\label{QuestionDSextendsDO}
Let $B$ be a double extension of $R$. If $R$ is differentially smooth, then is $B$ differentially smooth? 
\end{question}

\begin{example}[{\cite[Subcase 4.1.1]{ZhangZhang2009}}]\label{ZhangZhang2009Subcase4.1.1}
    Let $B = (\Bbbk_Q[x_1,x_2])_P[y_1, y_2; \sigma, \delta, \tau]$ be the right double extension generated by $x_1, x_2, y_1, y_2$ subject to the relations 
\begin{align*}
    x_2x_1 & = x_1x_2 + x_1^2, & y_2y_1 & = y_1y_2 + y_1^2,\\
    y_1x_1 & = fx_1y_1, & y_1x_2 & = gx_1y_1 + fx_2y_1,\\
    y_2x_1 & = hx_1y_1 + fx_1y_2, & y_2x_2 & = mx_1y_1 + hx_2y_1 + gx_1y_2 + fx_2y_2,
\end{align*}

where $f, g, h, m \in \Bbbk$ and $f \neq 0$. Note that in the relations
\[
    y_1x_1 = \sigma_{11}(x_1)y_1 + \sigma_{12}(x_1)y_2,\quad {\rm and} \quad y_1x_2 = \sigma_{11}(x_2)y_1 + \sigma_{12}(x_2)y_2,
\]

we have that $\sigma_{12} = 0$. In this case $B = R_P[y_1, y_2; \sigma, \delta, \tau]$ is a right double extension of $R = \Bbbk_Q[x_1,x_2]$. By Proposition \ref{Carvalhoetal2011Theorems2.2and2.4} (1), $B$ can be presented as the two-step iterated Ore extension $\Bbbk_Q[x_1,x_2][y_1;\sigma_1, d_1][y_2; \sigma_2, d_2]$. If we take the relation $y_2x_1 = hx_1y_1 + fx_1y_2$, we can see that it satisfies the condition (\ref{QuadraticrelationDSDOE}), it follows that $B$ is not differentially smooth.
\end{example}

\begin{example}[{\cite[Example 4.2]{ZhangZhang2008}}]\label{ZhangZhang2008Example4.2}
Consider $h\in \Bbbk^{*}$. Let $B(h)$ denote the graded algebra generated by $x_1, x_2, y_1$ and $y_2$ subject to the conditions
\begin{align*}
    x_2 x_1 = &\ - x_1 x_2, \\
    y_2 y_1 = &\ - y_1 y_2, \\
    y_1 x_1 = &\ h(x_1 y_1 + x_2y_1 + x_1y_2),  \\
    y_1 x_2 = &\ h (x_1 y_2), \\
    y_2 x_1 = &\ h(x_2y_1), \quad {\rm and} \\
    y_2 x_2 = &\ h(-x_2y_1 - x_1y_2 + x_2y_2).
\end{align*}

Following the notation in \cite{ZhangZhang2009}, the set of the last four relations is associated to the matrix
\[
\Sigma = h \begin{bmatrix}
    1 & 1 & 1 & 0 \\
    0 & 0 & 1 & 0 \\
    0 & 1 & 0 & 0 \\
    0 & -1 & -1 & 1
\end{bmatrix}.
\]

$B(h)$ is a quadratic connected graded algebra where ${\rm deg}\ x_1 = {\rm deg}\ x_2 = {\rm deg}\ y_1 = {\rm deg}\ y_2 = 1$. In fact, from \cite[Lemma 4.5]{ZhangZhang2008} we know that by considering $P = (-1, 0), \delta = 0, \tau = (0, 0, 0)$ and the algebra homomorphism 
\begin{align*}
  \sigma: A &\ \to M_2(A) \\  
  x_1 \mapsto &\ h \begin{bmatrix}
    x_1 + x_2 & x_1 \\ x_2 & 0
\end{bmatrix}, \\  
   x_2 \mapsto &\ h \begin{bmatrix}
    0 & x_1 \\ -x_2 & -x_1 + x_2
\end{bmatrix},
\end{align*}

then the data $\{P, \sigma, \delta, \tau \}$ satisfy the relations (\ref{Carvalhoetal2011(1.III)}) - (\ref{Carvalhoetal2011(1.VIII)}) for the generating set $\{x_1, x_2\}$, and hence $B(h)$ is a graded double extension of the Manin's plane $\mathcal{O}_{-1}(\Bbbk) = A = \Bbbk_{-1}[x_1, x_2] = \Bbbk \{ x_1, x_2 \}/ \langle x_1x_2 + x_2x_1 \rangle$. Note that if we write $\sigma = \begin{bmatrix}
    \sigma_{11} & \sigma_{12} \\ \sigma_{21} & \sigma_{22}
\end{bmatrix}$, then
\begin{align*}
  \begin{bmatrix}
    \sigma_{11}(x_1) & \sigma_{12}(x_1) \\ \sigma_{21}(x_1) & \sigma_{22}(x_1) \end{bmatrix} = &\ h \begin{bmatrix} x_1 + x_2 & x_1 \\ x_2 & 0 \end{bmatrix}, \quad {\rm and} \\
    \begin{bmatrix}
     \sigma_{11}(x_1^2) & \sigma_{12}(x_1^2) \\ \sigma_{21}(x_1^2) & \sigma_{22}(x_1^2) \end{bmatrix} = &\ h^2 \begin{bmatrix} (x_1 + x_2)^2 + x_1x_2 & (x_1 + x_2) x_1 \\ x_2(x_1 + x_2) & x_2x_1 \end{bmatrix}, 
\end{align*}

which shows that $\sigma_{ij}(x_1^2) \neq \sigma_{ij}(x_1)^2$ for each pair $\sigma_{ij}$, and so each one of them is not an algebra homomorphism.

$B(h)$ is a remarkable algebra because it is neither an Ore extension nor a normal extension of an Artin-Schelter regular algebra (generated by three elements in degree one) of global dimension three. 

Theorem \ref{DifferentiallySmoothDOE} shows that $B(h)$ is not differentially smooth: in the relation $y_1 x_1 - h(x_1 y_1 + x_2y_1 + x_1y_2)=0$ the indeterminate $x_2$ only appears once, and so $B(h)$ satisfies (\ref{QuadraticrelationDSDOE}).
\end{example}

\begin{example}\label{DoubleExtensionsDSYorN}
Consider Example \ref{ZhangZhang2008Example4.1}. Although Theorem \ref{DifferentiallySmoothDOE} is formulated for polynomial extensions on four indeterminates, since the relations 
\begin{align*}
    y_1 x = &\ b^{-1}xy_2 + cx^2, \\
    y_1 x = &\ axy_1 + xy_2, \quad {\rm and} \\
    y_2 x = &\ (2 + 2b^{-1})xy_1 + xy_2
\end{align*}

define the algebras $B^2$, $B^3$ and $B^4$, respectively, and all of them satisfy the condition (\ref{QuadraticrelationDSDOE}), it follows that these three double extensions of $\Bbbk[t]$ are not differentially smooth.

The case of algebra $B^1$ is different and interesting. We know that by using the linear transformation $y_2 \to y_2 + \frac{bc}{b - 1}x$ we get that $c = 0$, whence $B^{1}(1, p, a, b, c) \cong B^{1}(1, p, a, b, 0)$. Let $p :=b^{-2}$.

Consider the maps of $B^1$ given by 
\begin{align}
   \nu_{x}(x) = &\ x, & \nu_{x}(y_1) = &\ by_1, & \nu_{x}(y_2) = &\ b^{-1}y_2, \notag \\ 
    \nu_{y_1}(x) = &\ b^{-1}x, & \nu_{y_1}(y_1) = &\ y_1, &  \nu_{y_1}(y_2) = &\ b^{-2}y_2, \notag \\
    \nu_{y_2}(x) = &\ bx, & \nu_{y_2}(y_1) = &\ b^2y_1, & \nu_{y_2}(y_2) = &\ y_2. \notag
\end{align}

The map $\nu_{x}$ can be extended to an algebra homomorphism of $B^1$ if and only if the definitions of $\nu_{x}(x)$, $\nu_{x}(y_1)$ and $\nu_{x}(y_2)$ respect relations  {\rm (}\ref{rel1B1}{\rm )}, {\rm (}\ref{rel2B1}{\rm )} and {\rm (}\ref{rel3B1}{\rm )}, i.e.
\begin{align*}
   \nu_{x}(y_2)\nu_{x}(y_1)-p\nu_{x}(y_1)\nu_{x}(y_2) = &\ \frac{bc}{1-b}(pb - 1)\nu_{x}(x)\nu_{x}(y_1) + a\nu_{x}(x^2), \\
   \nu_{x}(y_1)\nu_{x}(x)-b\nu_{x}(x)\nu_{x}(y_1) = &\ 0, \quad {\rm and} \\
  \nu_{x}(y_2)\nu_{x}(x)-b^{-1}\nu_{x}(x)\nu_{x}(y_2) = &\ c\nu_{x}(x^2).
\end{align*}

We obtain the equation 
\begin{align*}
     a^{-1}b\frac{bc}{1-b}(pb - 1)(b-1) = 0,
\end{align*}

which due to the conditions on the constants defining the algebra only holds when $c=0$.

Similarly, the map $\nu_{y_1}$ can be extended to an algebra homomorphism of $B^1$ if and only if the equality $p-b^{-2}=0$ is satisfied. 

Finally, by considering the extension of the map $\nu_{y_2}$ to an algebra homomorphism of $B^1$, we obtain again the condition $p-b^{-2}=0$.

Since we need to guarantee that
\begin{align*}\label{commuauto}
   \nu_{x} \circ \nu_{y_1} = &\ \nu_{y_1} \circ \nu_{x}, \\
   \nu_{x} \circ \nu_{y_2} = &\  \nu_{y_2} \circ \nu_{x}, \quad {\rm and} \\
   \nu_{y_2} \circ \nu_{y_1} = &\ \nu_{y_1} \circ \nu_{y_2}, 
\end{align*}

it is enough to satisfy these equalities for the generators $x$, $y_1$ and $y_2$, that is, 
\begin{align}
\nu_{x} \circ \nu_{y_1}(x) = &\ b^{-1}x,  \\
\nu_{y_1} \circ \nu_{x}(x) = &\ b^{-1}x,  \\ 
\nu_{x} \circ \nu_{y_1}(y_1) = &\ by_1, \\
\nu_{y_1} \circ \nu_{x}(y_1) = &\ by_1, \label{comp12} \\
 \nu_{x} \circ \nu_{y_1}(y_2) = &\ b^{-3}y_2, \quad {\rm and} \label{comp123} \\
 \nu_{y_1} \circ \nu_{x}(y_2) = &\ qb^{-3}y_2. \label{comp213}
\end{align}

As it is clear, composition $\nu_{x} \circ \nu_{y_1} = \nu_{y_1} \circ \nu_{x}$ is always satisfied.

Now, 
\begin{align}
    \nu_{x} \circ \nu_{y_2}(x) = &\ bx, \label{comp13}\\
    \nu_{y_2} \circ \nu_{x}(x) = &\ bx, \label{comp21} \\
    \nu_{x} \circ \nu_{y_2}(y_1) = &\ b^3y_1, \label{comp13'}\\
    \nu_{y_2} \circ \nu_{x}(y_1) = &\ b^3y_1, \label{comp22} \\
    \nu_{x} \circ \nu_{y_2}(y_2) = &\ b^{-1}y_2, \quad {\rm and} \label{comp22'} \\
    \nu_{y_2} \circ \nu_{x}(y_2) = &\ b^{-1}y_2. \label{comp23}
\end{align}

Again, we can see that composition $\nu_{x} \circ \nu_{y_2} = \nu_{y_2} \circ \nu_{x}$ is always satisfied.

Finally, 
\begin{align}
    \nu_{y_2} \circ \nu_{y_1}(x) = &\ x, \label{comp31'} \\
    \nu_{y_1} \circ \nu_{y_2}(x) = &\ x, \label{comp31} \\
    \nu_{y_2} \circ \nu_{y_1}(y_1) = &\ b^2y_1, \label{comp32'} \\
    \nu_{y_1} \circ \nu_{y_2}(y_1) = &\ b^2y_1,  \label{comp32} \\
    \nu_{y_2} \circ \nu_{y_1}(y_2) = &\ b^{-2}y_2, \quad {\rm and} \label{comp33'}\\
    \nu_{y_1} \circ \nu_{y_2}(y_2) = &\ b^{-2}y_2. \label{comp33}
\end{align}

Composition $\nu_{y_2} \circ \nu_{y_1} = \nu_{y_1} \circ \nu_{y_2}$ is always satisfied.

Consider $\Omega^{1}B^1$ a free right $B^1$-module of rank three with generators $dx$, $dy_1$ and $dy_2$. For all $q\in B^1$ define a left $b^1$-module structure by
\begin{align}
    qdx = &\ dx \nu_{x}(q), \notag \\ \quad qdy_1 = &\ dy_1\nu_{y_1}(q),\ {\rm and} \notag \\
    qdy_2 = &\ dy_2\nu_{y_2}(q) \label{relrightmod}.
\end{align}

The relations in $\Omega^{1}B^1$ are given by 
\begin{align}
xdx = &\ dx x, \notag \\
xdy_1 = &\ b^{-1}dy_1x, \notag \\
xdy_2 = &\ bdy_2x, \label{rel1} \\
y_1dx = &\ bdxy_1, \notag \\  
y_1dy_1 = &\ dy_1y_1, \notag \\
y_1dy_2 = &\ b^2dy_2y_1, \label{rel2} \\
y_2dx = &\ b^{-1}dxy_2, \notag \\
y_2dy_1 = &\ b^{-2}dy_1y_2, \ {\rm and} \notag \\
y_2dy_2 = &\ dy_2y_2. \label{rel3} 
\end{align}
    
We want to extend the correspondences 
\begin{equation*}
x \mapsto d x, \quad y_1 \mapsto d y_1 \quad {\rm and} \quad y_2\mapsto d y_2
\end{equation*} 

to a map $d: B^1 \to \Omega^{1} B^1$ satisfying the Leibniz rule. This is possible if it is compatible with the nontrivial relations {\rm (}\ref{rel1B1}{\rm )}, {\rm (}\ref{rel2B1}{\rm )} and {\rm (}\ref{rel3B1}{\rm )}, i.e. if the equalities
\begin{align*}
         dy_2 y_1 +y_2dy_1 = &\ pdy_1 y_2+ py_1dy_2 + adxx+axdx, \\
        dy_1 x +y_1dx = &\ bdxy_1 +bxdy_1, \quad {\rm and} \\
        dy_2 x +y_2dx = &\ b^{-1}dxy_2 +b^{-1}xdy_2.
\end{align*}

hold.

Define $\Bbbk$-linear maps 
\begin{equation*}
\partial_{x}, \partial_{y_1}, \partial_{y_2}: B^1 \rightarrow B^1
\end{equation*}

such that
\begin{align*}
    d(q)=dx\partial_{x}(q)+dy_1\partial_{y_1}(q)+dy_2\partial_{y_2}(q), \quad {\rm for\ all} \ q \in B^1.
\end{align*}

Since $dx$, $dy_1$ and $dy_2$ are free generators of the right $B^1$-module $\Omega^1B^1$, these maps are well-defined. Note that $d(q)=0$ if and only if $\partial_{x}(q)=\partial_{y_1}(q)=\partial_{y_2}(q)=0$. By using the three relations in {\rm (}\ref{relrightmod}{\rm )} and the definitions of the maps $\nu_{x}$, $\nu_{y_1}$ and $\nu_{y_2}$, we get that
\begin{align}
\partial_{x}(x^ky_1^ly_2^s) = &\ kx^{k-1}y_1^ly_2^s, \notag \\
\partial_{y_1}(x^ky_1^ly_2^s) = &\ lb^{-k}x^ky_1^{l-1}y_2^s, \quad {\rm and} \notag \\
\partial_{y_2}(x^ky_1^ly_2^s) = &\ sb^{k+2l}x^ky_1^ly_2^{s-1}.
\end{align}

Thus $d(q)=0$ if and only if $a$ is a scalar multiple of the identity. This shows that $(\Omega ^1,d)$ is connected where $\Omega B^1 = \Omega^0 B^1 \oplus \Omega^1 B^1 \oplus \Omega^2 B^1$.

The universal extension of $d$ to higher forms compatible with {\rm (}\ref{rel1}{\rm )}, {\rm (}\ref{rel2}{\rm )} and {\rm (}\ref{rel3}{\rm )} gives the following rules for $\Omega^2B^1$:
\begin{align}
dy_1\wedge dx = &\ -bdx\wedge dy_1, \\
dy_2\wedge dx = &\ -b^{-1}dx\wedge dy_2, \quad {\rm and} \\ 
dy_2\wedge dy_1 = &\ -b^{-2}dy_1\wedge dy_2 \label{relsecond}.
\end{align}

Since the automorphisms $\nu_{x}$, $\nu_{y_1}$ and $\nu_{y_2}$ commute with each other, there are no additional relationships to the previous ones, so we get the expression
\begin{align*}
    \Omega^2B^1 =  dx\wedge dy_1B^1\oplus dx\wedge dy_2B^1\oplus dy_1\wedge dy_2B^1.
\end{align*}

Since $\Omega^3B^1 = \omega B^1\cong B^1$ as a right and left $B^1$-module, with $\omega=dx\wedge dy_1 \wedge dy_2$, where $\nu_{\omega}=\nu_{x}\circ\nu_{y_1}\circ\nu_{y_2}$, we have that $\omega$ is a volume form of $B^1$. From Proposition \ref{BrzezinskiSitarz2017Lemmas2.6and2.7} (2) we get that $\omega$ is an integral form by setting
\begin{align*}
\omega_1^1  = &\ \bar{\omega}_1^1 = dx, \\  
\omega_2^1 = &\ \bar{\omega}_2^1 = dy_1, \\
\omega_3^1 = &\ \bar{\omega}_3^1 = dy_2, \\
    \omega_1^2 = &\ dy_1\wedge dy_2, \\
    \omega_2^2 = &\ b^{3}dx\wedge dy_1, \\ 
    \omega_3^2 = &\ -b^{-1}dx\wedge dy_2, \\
    \bar{\omega}_1^2 = &\ dy_1\wedge dy_2, \\
    \bar{\omega}_2^2 = &\ -b^{-1}dx\wedge dy_2, \quad {\rm and} \\ 
    \bar{\omega}_3^2 = &\ dx\wedge dy_1.
\end{align*}

By Proposition \ref{BrzezinskiSitarz2017Lemmas2.6and2.7} (2), we consider the expression $\omega' := dx\alpha + dy_1\beta + dy_2\gamma$ with $\alpha, \beta, \gamma \in \Bbbk$, to obtain the equality
\begin{align*}
    \sum_{i=1}^{3}\omega_{i}^{1}\pi_{\omega}(\bar{\omega}_i^{2}\wedge \omega') = &\ dx\pi_{\omega}(\alpha dy_1\wedge dy_2\wedge dx) + dy_1\pi_{\omega}(-\beta b^{2}dt\wedge  dy_2\wedge dy_1) \\
    & + dy_2\pi_{\omega}(\gamma dx\wedge dy_1\wedge dy_2) =  dx\alpha+dy_1\beta+dy_2\gamma = \omega'.
\end{align*}

On the other hand, if $\omega'' := dx\wedge dy_1\alpha+dx\wedge dy_2 \beta+dy_1\wedge dy_2 \gamma$ where $\alpha, \beta, \gamma \in \Bbbk$, we get that
\begin{align*}
\sum_{i=1}^{3}\omega_{i}^{2}\pi_{\omega}(\bar{\omega}_i^{1}\wedge \omega'') = &\  dy_1\wedge dy_2\pi_{\omega}(\gamma dx\wedge dy_1 \wedge dy_2) \\
&\ +b^{3}dx\wedge dy_1\pi_{\omega}(\alpha dy_2\wedge dx\wedge dy_1) \\
    &\ -b^{-1}dx\wedge dy_2\pi_{\omega}(\beta dy_1 \wedge dx \wedge dy_2) \\ 
    = &\ dx\wedge dy_1\alpha+dx\wedge dy_2 \beta+dy_1\wedge dy_2 \gamma \\
    = &\ \omega''.
\end{align*}

As we have seen above, all elements of different degrees can be generated by $\omega_i^j$ and $\bar{\omega}_i^{3-j}$  for $j=1,2$ and $i = 1 , 2, 3$, so Proposition \ref{BrzezinskiSitarz2017Lemmas2.6and2.7} (2) guarantees that $\omega$ is an integral form. Finally, Proposition \ref{integrableequiva} shows that $(\Omega B^1, d)$ is an integrable differential calculus of degree $3$, and since ${\rm GKdim}(B^1)$ is also $3$, it follows that $A$ is differentially smooth.
\end{example}

{\em Answer to the Question \ref{QuestionDSextendsDO}}. False. Consider the Manin's plane $\mathcal{O}_q(\Bbbk)$. As we know from Example \ref{Brzezinski2015ExamplesOE}, this algebra is differentially smooth, and it is well-known that ${\rm GKdim} (\mathcal{O}_q(\Bbbk)) = 2$. However, in Example \ref{ZhangZhang2008Example4.2} we saw that $B(h)$ is a double extension of $\mathcal{O}_q(\Bbbk)$, and so ${\rm GKdim} (B(h)) = 4$.

\section{Conclusiones and Future work}\label{FutureworkDSDoubleOreExtensions}

In Examples \ref{ZhangZhang2009Subcase4.1.1}, \ref{ZhangZhang2008Example4.2} and \ref{DoubleExtensionsDSYorN} we saw some interesting facts on the differential smoothness of double extensions of differentially smooth algebras. As we said in the Introduction, Ore extensions and normal extensions of regular algebras of dimension three were studied by Le Bruyn et al. \cite{LeBruynSmithVandenBergh1996}, and all of them were not considered by Zhang and Zhang \cite{ZhangZhang2009} in their classification of 26 families of regular algebras of type (14641). We think that a natural task is to investigate the differential smoothnes of the algebras appearing in \cite{LeBruynSmithVandenBergh1996}.

On the other hand, Artin-Schelter regular algebras of dimension at most two that are generated by elements of degree one are easy to classify; they have good properties similar to those of commutative polynomial rings as the following result shows.

\begin{proposition}[{\cite[Examples 1.8 and 1.9]{Rogalski2023}}]\label{Rogalski2023Examples1.8and1.9}
\begin{enumerate}
\item [\rm (1)] If $A$ is a $\Bbbk$-algebra which is regular of global dimension one, then $A$ is the commutative polynomial ring in one indeterminate, i.e. $A\cong \Bbbk[x]$.
    
\item [\rm (2)] Let $A$ be a regular algebra of global dimension two. Then either $A \cong \mathcal{O}_q(\Bbbk) = \Bbbk \{x, y\} / \langle yx - qxy \rangle$ for some $q\in \Bbbk^{*}$, or $A\cong \mathcal{J}(\Bbbk) = \Bbbk\{x, y\}/ \langle yx - xy - y^2\rangle$. 
\end{enumerate}
\end{proposition}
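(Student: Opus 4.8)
The plan is to read the result off the homological definition of Artin--Schelter regularity. Throughout, let $A=\bigoplus_{i\ge 0}A_i$ be a connected graded $\Bbbk$-algebra, $\mathfrak m=A_{\ge 1}$ its augmentation ideal and $\Bbbk_A=A/\mathfrak m$ the trivial module; recall that $A$ is AS-regular of dimension $d$ when ${\rm gldim}\,A=d<\infty$, ${\rm GKdim}\,A<\infty$, and $A$ is Gorenstein, i.e. ${\rm Ext}^{i}_{A}(\Bbbk_A,A)=0$ for $i\neq d$ and ${\rm Ext}^{d}_{A}(\Bbbk_A,A)\cong\Bbbk$ up to a degree shift. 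The engine in both parts is the same: the Gorenstein property makes the minimal graded free resolution of $\Bbbk_A$ self-dual under ${\rm Hom}_A(-,A)$, which rigidly constrains its shape, and together with the growth bound ${\rm GKdim}\,A<\infty$ this leaves essentially no freedom.

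For (1), the minimal free resolution of $\Bbbk_A$ has the form $0\to P_1\to P_0\to\Bbbk_A\to 0$ with $P_0=A$ and $P_1=\bigoplus_j A(-d_j)$, the $d_j$ being the degrees of a minimal algebra generating set. Dualizing and using the Gorenstein condition for $d=1$ forces $P_1$ to have rank one, say $P_1=A(-d)$; exactness then says that $\mathfrak m$ is generated, as a left ideal, by a single element $x$ of degree $d$ which is a non-zero-divisor. A short induction on degree gives $A=\bigoplus_k\Bbbk x^{k}$, so $A\cong\Bbbk[x]$.

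For (2), where $A$ is generated in degree one (the standing convention), one has $P_0=A$, $P_1=A(-1)^{n}$ with $n$ the number of generators, and $P_2=\bigoplus_j A(-d_j)^{m_j}$ with all $d_j\ge 2$. Self-duality at $d=2$ forces $P_2=A(-l)$ of rank one, and comparing $P_1^{\vee}$ with $P_1$ (up to the ambient shift) forces $l=2$; hence $A$ is quadratic with exactly one relation and $H_A(t)=(1-nt+t^{2})^{-1}$. The growth constraint then pins down $n$: for $n=1$ this series has a negative coefficient and for $n\ge 3$ the denominator has a real root in $(0,1)$, both incompatible with ${\rm GKdim}\,A<\infty$; so $n=2$, $H_A(t)=(1-t)^{-2}$ and ${\rm GKdim}\,A=2$. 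Thus $A=T(V)/(r)$ with $\dim_{\Bbbk}V=2$ and $r$ a single quadratic element, encoded by its coefficient matrix $C\in M_2(\Bbbk)$.

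It remains to classify $r$ up to graded isomorphism, and this is where I expect the genuine work to be. One shows that AS-regularity is equivalent to $C$ being invertible — if $C$ is singular the relation degenerates and the Gorenstein condition fails, because ${\rm Ext}^{2}_A(\Bbbk_A,A)$ then becomes infinite-dimensional — and that, under the ${\rm GL}(V)$-action on $r$ (congruence $C\mapsto gCg^{\mathsf{T}}$) together with overall rescaling, an invertible $2\times 2$ matrix falls into exactly two orbits, distinguished by the conjugacy class of the asymmetry operator $C^{-1}C^{\mathsf{T}}$ (whose determinant is always $1$): diagonalizable, which gives the relations $yx-qxy$ and hence $A\cong\mathcal{O}_q(\Bbbk)$ for some $q\in\Bbbk^{\ast}$; or a single Jordan block, necessarily with eigenvalue $-1$, which gives $yx-xy-y^{2}$ and hence $A\cong\mathcal{J}(\Bbbk)$. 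A direct check that $\mathcal{O}_q(\Bbbk)$ and $\mathcal{J}(\Bbbk)$ are themselves AS-regular of dimension two — each carries the self-dual Koszul resolution $0\to A(-2)\to A(-1)^{2}\to A\to\Bbbk_A\to 0$ — closes the argument. The equivalence ``AS-regular $\Leftrightarrow$ $C$ invertible'' and the attendant two-orbit normal-form classification are the main obstacle; the Hilbert-series exclusion of $n\neq 2$ is a secondary one; the rest is formal manipulation of minimal free resolutions.
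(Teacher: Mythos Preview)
The paper does not supply its own proof of this proposition: it is recorded in the \textquotedblleft Future work\textquotedblright\ section purely as a citation to Rogalski's survey, with no accompanying argument. There is therefore nothing in the paper itself to compare your sketch against.

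That said, your outline is the standard one and is correct. For {\rm (1)} the Gorenstein condition collapses the minimal resolution to $0\to A(-d)\to A\to\Bbbk_A\to 0$, giving a single regular generator and $A\cong\Bbbk[x]$. For {\rm (2)} self-duality of the length-two resolution forces $P_2=A(-2)$ of rank one, the Hilbert series becomes $(1-nt+t^2)^{-1}$, and the finite-GK constraint eliminates $n\neq 2$ exactly as you say (negative coefficients for $n=1$, a real pole in $(0,1)$ for $n\ge 3$). The remaining congruence classification of the invertible coefficient matrix via the asymmetry $C^{-1}C^{\mathsf T}$ then yields precisely $\mathcal{O}_q(\Bbbk)$ and $\mathcal{J}(\Bbbk)$; your observation that the non-diagonalisable case must have eigenvalue $-1$ (a Jordan block with eigenvalue $+1$ forces $C$ singular) is the right reason. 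You have also correctly identified the one genuinely non-formal step --- the equivalence between AS-regularity and invertibility of $C$ --- and this is exactly where the cited source does the work.
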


From Example \ref{Brzezinski2015ExamplesOE} (iii) we know that $\Bbbk[x]$, $\mathcal{O}_q(\Bbbk)$ and $\mathcal{J}(\Bbbk)$ are differentially smooth.

Since Artin-Schelter regular algebras of dimension three that are generated by elements of degree one have been classified by Artin, Schelter, Tate and Van den Bergh \cite{ArtinSchelter1987, ArtinTateVandenBergh2007, ArtinTateVandenBergh1991}), and that the characterization of these algebras requires greater mathematical techniques that have not been considered at the time when this paper was written, the study of the differential smoothness of these algebras will be one of our immediate tasks.

\begin{landscape}
\begin{table}[h]
\caption{Double extensions}
\label{FirstTableDOE}
\begin{center}
\resizebox{20cm}{!}{
\setlength\extrarowheight{11pt}
\begin{tabular}{ |c|c|c|c|c|c| } 
\hline
Double extension & Relations defining the double extension  & $\Sigma_{ij}$ & $M_{ij}$ & Data $\{\Sigma, M, P, Q\}$ & Conditions \\
\hline
\multirow{3}{*}{$\mathbb{A}$} &  $x_2x_1 = x_1 x_2,\quad y_2y_1 = y_1y_2+y_{1}^{2}$, &  &  &  & \\ 
& $y_1x_1 = x_1y_1,  \quad y_1x_2=x_2y_1+x_1y_2$, & $\Sigma_{11}=\begin{bmatrix}
    1 & 0  \\ 0 & 1 
\end{bmatrix}, \quad \Sigma_{12}=\begin{bmatrix}
    0 & 0  \\ 1 & 0 
\end{bmatrix}, \quad \Sigma_{21}=\begin{bmatrix}
    0 & 0  \\ 0 & -2 
\end{bmatrix}, \quad \Sigma_{22}=\begin{bmatrix}
    1 & 0  \\ -1 & 1 
\end{bmatrix}$ & $M_{11}=\begin{bmatrix}
    1 & 0  \\ 0 & 1 
\end{bmatrix}, \quad M_{12}=\begin{bmatrix}
    0 & 0  \\ 0 & 0 
\end{bmatrix}, \quad M_{21}=\begin{bmatrix}
    0 & 1  \\ 0 & -1 
\end{bmatrix}, \quad M_{22}=\begin{bmatrix}
    1 & 0  \\ -2 & 1 
\end{bmatrix}$ & $\Sigma=\begin{bmatrix}
    1 & 0 & 0 & 0 \\ 0 & 1 & 1 & 0 \\ 0 & 0 & 1 & 0 \\ 0 & -2 & -1 & 1
\end{bmatrix}, \quad M=\begin{bmatrix}
    1 & 0 & 0 & 0 \\ 0 & 1 & 0 & 0 \\ 0 & 1 & 1 & 0 \\ 0 & -1 & -2 & 1
\end{bmatrix}, \quad P=(1, 1), \quad Q=(1, 0)$  &  \\ 
&   $y_2x_1 = x_1y_2, \quad y_2x_2=-2x_2y_1-x_1y_2+x_2y_2$ & &  &  & \\
\hline
\multirow{3}{*}{$\mathbb{B}$} &   $x_2x_1 = px_1 x_2, \quad y_2y_1 = py_1y_2$, & & & & \\ 
& $y_1x_1 = x_2y_2,  \quad y_1x_2=x_1y_2$,  & $\Sigma_{11}=\begin{bmatrix}
    0 & 0  \\ 0 & 0
\end{bmatrix}, \quad \Sigma_{12}=\begin{bmatrix}
    0 & 1  \\ 1 & 0 
\end{bmatrix}, \quad \Sigma_{21}=\begin{bmatrix}
    0 & -1  \\ 1 & 0
\end{bmatrix}, \quad \Sigma_{22}=\begin{bmatrix}
    0 & 0  \\ 0 & 0 
\end{bmatrix}$ & $M_{11}=\begin{bmatrix}
    0 & 0  \\ 0 & 0 
\end{bmatrix}, \quad M_{12}=\begin{bmatrix}
    0 & 1  \\ -1 & 0 
\end{bmatrix}, \quad M_{21}=\begin{bmatrix}
    0 & 1  \\ 1 & 0
\end{bmatrix}, \quad M_{22}=\begin{bmatrix}
    0 & 0  \\ 0 & 0 
\end{bmatrix}$  & $\Sigma=\begin{bmatrix}
    0 & 0 & 0 & 1 \\ 0 & 0 & 1 & 0 \\ 0 & -1 & 0 & 0 \\ 1 & 0 & 0 & 0
\end{bmatrix}, \quad M=\begin{bmatrix}
    0 & 0 & 0 & 1 \\ 0 & 0 & -1 & 0 \\ 0 & 1 & 0 & 0 \\ 1 & 0 & 0 & 0
\end{bmatrix}, \quad P=(p, 0), \quad Q=(p, 0)$  & $p^2 = -1$ \\ 
&    $y_2x_1 = -x_2y_1, \quad y_2x_2=x_1y_1$ & & & & \\
\hline
\multirow{3}{*}{$\mathbb{C}$} &  $x_2x_1 = px_1 x_2, \quad y_2y_1 = py_1y_2$, & & &  & \\ 
&  $y_1x_1 = -x_1y_1+p^2x_2y_1+x_1y_2-px_2y_2,  \quad y_1x_2 =-px_1y_1+x_2y_1+x_1y_2-px_2y_2$,  & $\Sigma_{11}=\begin{bmatrix}
    -1 & p^2  \\ -p & 1 
\end{bmatrix}, \quad \Sigma_{12}=\begin{bmatrix}
    1 & -p  \\ 1 & -p 
\end{bmatrix}, \quad \Sigma_{21}=\begin{bmatrix}
    -p & -2p^2  \\ -p & p^2 
\end{bmatrix}, \quad \Sigma_{22}=\begin{bmatrix}
    p & -p  \\ 1 & -1 
\end{bmatrix}$ & $M_{11}=\begin{bmatrix}
    -1 & 1  \\ -p & p 
\end{bmatrix}, \quad M_{12}=\begin{bmatrix}
    p^2 & -p  \\ -2p^2 & -p 
\end{bmatrix}, \quad M_{21}=\begin{bmatrix}
    -p & 1  \\ -p & 1 
\end{bmatrix}, \quad M_{22}=\begin{bmatrix}
    1 & -p  \\ p^2 & -1 
\end{bmatrix}$ &  $\Sigma=\begin{bmatrix}
    -1 & p^2 & 1 & -p \\ -p & 1 & 1 & -p \\ -p & -2p^2 & p & -p \\ -p & p^2 & 1 & -1
\end{bmatrix}, \quad M=\begin{bmatrix}
    -1 & 1 & p^2 & -p \\ -p & p & -2p^2 & -p \\ -p & 1 & 1 & -p \\ -p & 1 & p^2 & -1
\end{bmatrix}, \quad P=(p, 0), \quad Q=(p, 0)$  & $p^2+p+1 = 0$ \\ 
&    $y_2x_1 = -px_1y_1-2p^2x_2y_1+px_1y_2-px_2y_2, \quad y_2x_2 =-px_1y_1+p^2x_2y_1+x_1y_2-x_2y_2$ & & & & \\
\hline
\multirow{3}{*}{$\mathbb{D}$} &  $x_2x_1 = -x_1 x_2, \quad y_2y_1 = py_1y_2$, & & &  &  \\ 
& $y_1x_1 = -px_1y_1,  \quad y_1x_2=-p^2x_2y_1+x_1y_2$,  & $\Sigma_{11}=\begin{bmatrix}
    -p & 0  \\ 0 & -p^2 
\end{bmatrix}, \quad \Sigma_{12}=\begin{bmatrix}
    0 & 0  \\ 1 & 0 
\end{bmatrix}, \quad \Sigma_{21}=\begin{bmatrix}
    0 & 0  \\ 1 & 0 
\end{bmatrix}, \quad \Sigma_{22}=\begin{bmatrix}
    p & 0  \\ 0 & 1 
\end{bmatrix}$ & $M_{11}=\begin{bmatrix}
    -p & 0  \\ 0 & p 
\end{bmatrix}, \quad M_{12}=\begin{bmatrix}
    0 & 0  \\ 0 & 0 
\end{bmatrix}, \quad M_{21}=\begin{bmatrix}
    0 & 1  \\ 1 & 0 
\end{bmatrix}, \quad M_{22}=\begin{bmatrix}
    -p^2 & 0  \\ 0 & 1 
\end{bmatrix}$  & $\Sigma=\begin{bmatrix}
    -p & 0 & 0 & 0 \\ 0 & -p^2 & 1 & 0 \\ 0 & 0 & p & 0 \\ 1 & 0 & 0 & 1
\end{bmatrix}, \quad M=\begin{bmatrix}
    -p & 0 & 0 & 0 \\ 0 & p & 0 & 0 \\ 0 & 1 & -p^2 & 0 \\ 1 & 0 & 0 & 1
\end{bmatrix}, \quad P=(p, 0), \quad Q=(-1, 0)$  & $p\in \{-1,1\}$ \\ 
&    $y_2x_1 = px_1y_2, \quad y_2x_2=x_1y_1+x_2y_2$ &  & & & \\
\hline
\multirow{3}{*}{$\mathbb{E}$} &  $x_2x_1 = -x_1 x_2, \quad y_2y_1 = py_1y_2$, & & &  & \\ 
&  $y_1x_1 = x_1y_2+x_2y_2,  \quad y_1x_2=x_1y_2-x_2y_2$, & $\Sigma_{11}=\begin{bmatrix}
    0 & 0  \\ 0 & 0 
\end{bmatrix}, \quad \Sigma_{12}=\begin{bmatrix}
    1 & 1  \\ 1 & -1 
\end{bmatrix}, \quad \Sigma_{21}=\begin{bmatrix}
    -1 & 1  \\ 1 & 1 
\end{bmatrix}, \quad \Sigma_{22}=\begin{bmatrix}
    0 & 0  \\ 0 & 0 
\end{bmatrix}$ & $M_{11}=\begin{bmatrix}
    0 & 1  \\ -1 & 0 
\end{bmatrix}, \quad M_{12}=\begin{bmatrix}
    0 & 1  \\ 1 & 0 
\end{bmatrix}, \quad M_{21}=\begin{bmatrix}
    0 & 1  \\ 1 & 0 
\end{bmatrix}, \quad M_{22}=\begin{bmatrix}
    0 & -1  \\ 1 & 0 
\end{bmatrix}$ & $\Sigma=\begin{bmatrix}
    0 & 0 & 1 & 1 \\ 0 & 0 & 1 & -1 \\ -1 & 1 & 0 & 0 \\ 1 & 1 & 0 & 0
\end{bmatrix}, \quad M=\begin{bmatrix}
    0 & 1 & 0 & 1 \\ -1 & 0 & 1 & 0 \\ 0 & 1 & 0 & -1 \\ 1 & 0 & 1 & 0
\end{bmatrix}, \quad P=(p, 0), \quad Q=(-1, 0)$  & $p^2=-1$ \\ 
&    $y_2x_1 = -x_1y_1+x_2y_1, \quad y_2x_2=x_1y_1+x_2y_1$ & & & &  \\
\hline
\multirow{3}{*}{$\mathbb{F}$} &  $x_2x_1 = -x_1 x_2, \quad y_2y_1 = py_1y_2$, & & &  & \\ 
&  $y_1x_1 = -x_1y_1-px_2y_1+x_1y_2-x_2y_2,  \quad y_1x_2 =-px_1y_1+x_2y_1+x_1y_2+x_2y_2$, & $\Sigma_{11}=\begin{bmatrix}
    -1 & -p  \\ -p & 1 
\end{bmatrix}, \quad \Sigma_{12}=\begin{bmatrix}
    1 & -1  \\ 1 & 1 
\end{bmatrix}, \quad \Sigma_{21}=\begin{bmatrix}
    -p & p  \\ -p & -p 
\end{bmatrix}, \quad \Sigma_{22}=\begin{bmatrix}
    p & 1  \\ 1 & -p 
\end{bmatrix}$ & $M_{11}=\begin{bmatrix}
    +1 & 1  \\ -p & p 
\end{bmatrix}, \quad M_{12}=\begin{bmatrix}
    -p & -1  \\ p & 1 
\end{bmatrix}, \quad M_{21}=\begin{bmatrix}
    -p & 1  \\ -p & 1 
\end{bmatrix}, \quad M_{22}=\begin{bmatrix}
    1 & 1  \\ -p & -p 
\end{bmatrix}$ & $\Sigma=\begin{bmatrix}
    -1 & -p & 1 & -1 \\ -p & 1 & 1 & 1 \\ -p & p & p & 1 \\ -p & -p & 1 & -p
\end{bmatrix}, \quad M=\begin{bmatrix}
    -1 & 1 & -p & -1 \\ -p & p & p & 1 \\ -p & 1 & 1 & 1 \\ -p & 1 & -p & -p
\end{bmatrix}, \quad P=(p, 0), \quad Q=(-1, 0)$  & $p^2=-1$ \\ 
&    $y_2x_1 = -px_1y_1+px_2y_1+px_1y_2+x_2y_2, \quad y_2x_2 =-px_1y_1-px_2y_1+x_1y_2-px_2y_2$ & & &  & \\
\hline
\multirow{3}{*}{$\mathbb{G}$} &   $ x_2x_1 = x_1 x_2, \quad y_2y_1 = py_1y_2$, & & &  & \\ 
&  $y_1x_1 = px_1y_1,  \quad y_1x_2=px_1y_1+p^2x_2y_1+x_1y_2$, & $\Sigma_{11}=\begin{bmatrix}
    p & 0  \\ p & p^2 
\end{bmatrix}, \quad \Sigma_{12}=\begin{bmatrix}
    0 & 0  \\ 1 & 0 
\end{bmatrix}, \quad \Sigma_{21}=\begin{bmatrix}
    0 & 0  \\ f & 0 
\end{bmatrix}, \quad \Sigma_{22}=\begin{bmatrix}
    p & 0  \\ -1 & 1 
\end{bmatrix}$ & $M_{11}=\begin{bmatrix}
    p & 0  \\ 0 & p 
\end{bmatrix}, \quad M_{12}=\begin{bmatrix}
    0 & 0  \\ 0 & 0 
\end{bmatrix}, \quad M_{21}=\begin{bmatrix}
    p & 1  \\ f & -1 
\end{bmatrix}, \quad M_{22}=\begin{bmatrix}
    p^2 & 0  \\ 0 & 1 
\end{bmatrix}$  &  $\Sigma=\begin{bmatrix}
    p & 0 & 0 & 0 \\ p & p^2 & 1 & 0 \\ 0 & 0 & p & 0 \\ f & 0 & -1 & 1
\end{bmatrix}, \quad M=\begin{bmatrix}
    p & 0 & 0 & 0 \\ 0 & p & 0 & 0 \\ p & 1 & p^2 & 0 \\ f & -1 & 0 & 1
\end{bmatrix}, \quad P=(p, 0), \quad Q=(1, 0)$  & $p\not = 0, \pm 1$ and $f \not = 0$ \\ 
&    $y_2x_1 = px_1y_2, \quad y_2x_2=fx_1y_1-x_1y_2+x_2y_2$ & & & & \\
\hline
\end{tabular}
}
\end{center}
\end{table}
\end{landscape}

\begin{landscape}
{\huge{
\begin{table}[h]
\caption{Double extensions}
\label{SecondTableDOE}
\begin{center}
\resizebox{20cm}{!}{
\setlength\extrarowheight{11pt}
\begin{tabular}{ |c|c|c|c|c|c| } 
\hline
Double extension & Relations defining the double extension  & $\Sigma_{ij}$ & $M_{ij}$ & Data $\{\Sigma, M, P, Q\}$ & Conditions \\
\hline
\multirow{3}{*}{$\mathbb{H}$} &   $ x_2x_1 = x_1 x_2+x_1^2, \quad y_2y_1 = -y_1y_2$, &  & &  &\\ 
&  $y_1x_1 = x_1y_2,  \quad y_1x_2=fx_1y_2+x_2y_2$, & $\Sigma_{11}=\begin{bmatrix}
    0 & 0  \\ 0 & 0 
\end{bmatrix}, \quad \Sigma_{12}=\begin{bmatrix}
    1 & 0  \\ f & 1 
\end{bmatrix}, \quad \Sigma_{21}=\begin{bmatrix}
    1 & 0  \\ f & 1 
\end{bmatrix}, \quad \Sigma_{22}=\begin{bmatrix}
    0 & 0  \\ 0 & 0 
\end{bmatrix}$ & $M_{11}=\begin{bmatrix}
    0 & 1  \\ 1 & 0 
\end{bmatrix}, \quad M_{12}=\begin{bmatrix}
    0 & 0  \\ 0 & 0 
\end{bmatrix}, \quad M_{21}=\begin{bmatrix}
    0 & f  \\ f & 0 
\end{bmatrix}, \quad M_{22}=\begin{bmatrix}
    0 & 1  \\ 1 & 0 
\end{bmatrix}$ & $\Sigma=\begin{bmatrix}
    0 & 0 & 1 & 0 \\ 0 & 0 & f & 1 \\ 1 & 0 & 0 & 0 \\ f & 1 & 0 & 0
\end{bmatrix}, \quad M=\begin{bmatrix}
    0 & 1 & 0 & 0 \\ 1 & 0 & 0 & 0 \\ 0 & f & 0 & 1 \\ f & 0 & 1 & 0
\end{bmatrix}, \quad P=(-1, 0), \quad Q=(1, 1)$  & $f \not = 0$ \\ 
&   $y_2x_1 = x_1y_1, \quad y_2x_2=fx_1y_1+x_2y_1$ & &  & & \\
\hline
\multirow{3}{*}{$\mathbb{I}$} &  $ x_2x_1 = qx_1 x_2, \quad y_2y_1 = -y_1y_2$, & &  &  & \\ 
& $y_1x_1 = -qx_1y_1-qx_2y_1+x_1y_2-qx_2y_2,  \quad y_1x_2 = x_1y_1+x_2y_1+x_1y_2-qx_2y_2$,  & $\Sigma_{11}=\begin{bmatrix}
    -q & -q  \\ 1 & 1 
\end{bmatrix}, \quad \Sigma_{12}=\begin{bmatrix}
    1 & -q  \\ 1 & -q 
\end{bmatrix}, \quad \Sigma_{21}=\begin{bmatrix}
    1 & q  \\ -1 & -q 
\end{bmatrix}, \quad \Sigma_{22}=\begin{bmatrix}
    q & -q  \\ 1 & -1 
\end{bmatrix}$ & $M_{11}=\begin{bmatrix}
    -q & 1  \\ 1 & q 
\end{bmatrix}, \quad M_{12}=\begin{bmatrix}
    -q & -q  \\ q & -q 
\end{bmatrix}, \quad M_{21}=\begin{bmatrix}
    1 & 1  \\ -1 & 1 
\end{bmatrix}, \quad M_{22}=\begin{bmatrix}
    1 & -q  \\ -q & -1 
\end{bmatrix}$  & $\Sigma=\begin{bmatrix}
    -q & -q & 1 & -q \\ 1 & 1 & 1 & -q \\ 1 & q & q & -q \\ -1 & -q & 1 & -1
\end{bmatrix}, \quad M=\begin{bmatrix}
    -q & 1 & -q & -q \\ 1 & q & q & -q \\ 1 & 1 & 1 & -q \\ -1 & 1 & -q & -1
\end{bmatrix}, \quad P=(-1, 0), \quad Q=(q, 0)$  & $q^2=-1$ \\ 
&    $y_2x_1 = x_1y_1+qx_2y_1+qx_1y_2-qx_2y_2, \quad y_2x_2 =-x_1y_1-qx_2y_1+x_1y_2-x_2y_2$ & & & & \\
\hline
\multirow{3}{*}{$\mathbb{J}$} &  $ x_2x_1 = qx_1 x_2, \quad y_2y_1 = -y_1y_2$, & & & &  \\ 
&  $y_1x_1 = x_2y_1+x_2y_2,  \quad y_1x_2=-x_1y_1+x_1y_2$, & $\Sigma_{11}=\begin{bmatrix}
    0 & 1  \\ -1 & 0 
\end{bmatrix}, \quad \Sigma_{12}=\begin{bmatrix}
    0 & 1  \\ 1 & 0 
\end{bmatrix}, \quad \Sigma_{21}=\begin{bmatrix}
    0 & 1  \\ 1 & 0 
\end{bmatrix}, \quad \Sigma_{22}=\begin{bmatrix}
    0 & -1  \\ 1 & 0 
\end{bmatrix}$ & $M_{11}=\begin{bmatrix}
    0 & 0  \\ 0 & 0 
\end{bmatrix}, \quad M_{12}=\begin{bmatrix}
    1 & 1  \\ 1 & -1 
\end{bmatrix}, \quad M_{21}=\begin{bmatrix}
    -1 & 1  \\ 1 & 1 
\end{bmatrix}, \quad M_{22}=\begin{bmatrix}
    0 & 0  \\ 0 & 0 
\end{bmatrix}$  &  $\Sigma=\begin{bmatrix}
    0 & 1 & 0 & 1 \\ -1 & 0 & 1 & 0 \\ 0 & 1 & 0 & -1 \\ 1 & 0 & 1 & 0
\end{bmatrix}, \quad M=\begin{bmatrix}
    0 & 0 & 1 & 1 \\ 0 & 0 & 1 & -1 \\ -1 & 1 & 0 & 0 \\ 1 & 1 & 0 & 0
\end{bmatrix}, \quad P=(-1, 0), \quad Q=(q, 0)$  & $q^2=-1$ \\ 
&    $y_2x_1 = x_2y_1-x_2y_2, \quad y_2x_2=x_1y_1+x_1y_2$ &  & &  & \\
\hline
\multirow{3}{*}{$\mathbb{K}$} &  $ x_2x_1 = qx_1 x_2, \quad y_2y_1 = -y_1y_2$, &  & & & \\ 
&  $y_1x_1 = x_1y_1,  \quad y_1x_2=x_2y_2$, & $\Sigma_{11}=\begin{bmatrix}
    1 & 0  \\ 0 & 0 
\end{bmatrix}, \quad \Sigma_{12}=\begin{bmatrix}
    0 & 0  \\ 0 & 1 
\end{bmatrix}, \quad \Sigma_{21}=\begin{bmatrix}
    0 & 0  \\ 0 & f 
\end{bmatrix}, \quad \Sigma_{22}=\begin{bmatrix}
    1 & 0  \\ 0 & 0 
\end{bmatrix}$ & $M_{11}=\begin{bmatrix}
    1 & 0  \\ 0 & 1 
\end{bmatrix}, \quad M_{12}=\begin{bmatrix}
    0 & 0  \\ 0 & 0 
\end{bmatrix}, \quad M_{21}=\begin{bmatrix}
    0 & 0  \\ 0 & 0 
\end{bmatrix}, \quad M_{22}=\begin{bmatrix}
    0 & 1  \\ f & 0 
\end{bmatrix}$ & $\Sigma=\begin{bmatrix}
    1 & 0 & 0 & 0 \\ 0 & 0 & 0 & 1 \\ 0 & 0 & 1 & 0 \\ 0 & f & 0 & 0
\end{bmatrix}, \quad M=\begin{bmatrix}
    1 & 0 & 0 & 0 \\ 0 & 1 & 0 & 0 \\ 0 & 0 & 0 & 1 \\ 0 & 0 & f & 0
\end{bmatrix}, \quad P=(-1, 0), \quad Q=(q, 0)$  & $q\in \{-1,1\}$ and $f \not = 0$ \\ 
&    $y_2x_1 = x_1y_2, \quad y_2x_2=fx_2y_1$ & & & & \\
\hline
\multirow{3}{*}{$\mathbb{L}$} &  $ x_2x_1 = qx_1 x_2, \quad y_2y_1 = -y_1y_2$, &  & &  & \\ 
&  $y_1x_1 = fx_1y_2,  \quad y_1x_2=x_2y_2$, & $\Sigma_{11}=\begin{bmatrix}
    0 & 0  \\ 0 & 0 
\end{bmatrix}, \quad \Sigma_{12}=\begin{bmatrix}
    f & 0  \\ 0 & 1 
\end{bmatrix}, \quad \Sigma_{21}=\begin{bmatrix}
    f & 0  \\ 0 & 1 
\end{bmatrix}, \quad \Sigma_{22}=\begin{bmatrix}
    0 & 0  \\ 0 & 0 
\end{bmatrix}$ & $M_{11}=\begin{bmatrix}
    0 & f  \\ f & 0 
\end{bmatrix}, \quad M_{12}=\begin{bmatrix}
    0 & 0  \\ 0 & 0 
\end{bmatrix}, \quad M_{21}=\begin{bmatrix}
    0 & 0  \\ 0 & 0 
\end{bmatrix}, \quad M_{22}=\begin{bmatrix}
    0 & 1  \\ 1 & 0 
\end{bmatrix}$ & $\Sigma=\begin{bmatrix}
    0 & 0 & f & 0 \\ 0 & 0 & 0 & 1 \\ f & 0 & 0 & 0 \\ 0 & 1 & 0 & 0
\end{bmatrix}, \quad M=\begin{bmatrix}
    0 & f & 0 & 0 \\ f & 0 & 0 & 0 \\ 0 & 0 & 0 & 1 \\ 0 & 0 & 1 & 0
\end{bmatrix}, \quad P=(-1, 0), \quad Q=(q, 0)$  & $q\in \{-1,1\}$ and $f \not = 0$ \\ 
&    $y_2x_1 = fx_1y_1, \quad y_2x_2=x_2y_1$ & & & & \\
\hline
\multirow{3}{*}{$\mathbb{M}$} & $ x_2x_1 = -x_1 x_2, \quad y_2y_1 = -y_1y_2$, &  & &  & \\ 
& $y_1x_1 = x_2y_1+x_1y_2,  \quad y_1x_2=fx_1y_1-x_2y_2$,  & $\Sigma_{11}=\begin{bmatrix}
    0 & 1  \\ f & 0 
\end{bmatrix}, \quad \Sigma_{12}=\begin{bmatrix}
    1 & 0  \\ 0 & -1 
\end{bmatrix}, \quad \Sigma_{21}=\begin{bmatrix}
    1 & 0  \\ 0 & -1 
\end{bmatrix}, \quad \Sigma_{22}=\begin{bmatrix}
    0 & -1  \\ -f & 0 
\end{bmatrix}$ & $M_{11}=\begin{bmatrix}
    0 & 1  \\ 1 & 0 
\end{bmatrix}, \quad M_{12}=\begin{bmatrix}
    1 & 0  \\ 0 & -1 
\end{bmatrix}, \quad M_{21}=\begin{bmatrix}
    f & 0  \\ 0 & -f 
\end{bmatrix}, \quad M_{22}=\begin{bmatrix}
    0 & -1  \\ -1 & 0 
\end{bmatrix}$ &  $\Sigma=\begin{bmatrix}
    0 & 1 & 1 & 0 \\ f & 0 & 0 & -1 \\ 1 & 0 & 0 & -1 \\ 0 & -1 & -f & 0
\end{bmatrix}, \quad M=\begin{bmatrix}
    0 & 1 & 1 & 0 \\ 1 & 0 & 0 & -1 \\ f & 0 & 0 & -1 \\ 0 & -f & -1 & 0
\end{bmatrix}, \quad P=(-1, 0), \quad Q=(-1, 0)$  & $f \not = 1$ \\ 
&   $y_2x_1 = x_1y_1-x_2y_2, \quad y_2x_2=-x_2y_1-fx_1y_2$ &  & & & \\
\hline
\multirow{3}{*}{$\mathbb{N}$} &  $ x_2x_1 = -x_1 x_2, \quad y_2y_1 = -y_1y_2$, & & &  & \\ 
&  $y_1x_1 = -gx_2y_1+fx_2y_2,  \quad y_1x_2=gx_1y_1+fx_1y_2$, & $\Sigma_{11}=\begin{bmatrix}
    0 & -g  \\ g & 0 
\end{bmatrix}, \quad \Sigma_{12}=\begin{bmatrix}
    0 & f  \\ f & 0 
\end{bmatrix}, \quad \Sigma_{21}=\begin{bmatrix}
    0 & f  \\ f & 0 
\end{bmatrix}, \quad \Sigma_{22}=\begin{bmatrix}
    0 & -g  \\ g & 0 
\end{bmatrix}$ & $M_{11}=\begin{bmatrix}
    0 & 0  \\ 0 & 0 
\end{bmatrix}, \quad M_{12}=\begin{bmatrix}
    -g & f  \\ f & -g 
\end{bmatrix}, \quad M_{21}=\begin{bmatrix}
    g & f  \\ f & g 
\end{bmatrix}, \quad M_{22}=\begin{bmatrix}
    0 & 0  \\ 0 & 0 
\end{bmatrix}$  &  $\Sigma=\begin{bmatrix}
    0 & -g & 0 & f \\ g & 0 & f & 0 \\ 0 & f & 0 & -g \\ f & 0 & g & 0
\end{bmatrix}, \quad M=\begin{bmatrix}
    1 & 0 & -g & f \\ 0 & 0 & f & -g \\ g & f & 0 & 0 \\ f & g & 0 & 0
\end{bmatrix}, \quad P=(-1, 0), \quad Q=(-1, 0)$  & $f^2 \not = g^2$ \\ 
&    $y_2x_1 = fx_2y_1-gx_2y_2, \quad y_2x_2=fx_1y_1+gx_1y_2$ & & & &
\\
\hline
\end{tabular}
}
\end{center}
\end{table}
}}
\end{landscape}

\begin{landscape}
{\huge{
\begin{table}[h]
\caption{Double extensions}
\label{ThirdTableDOE}
\begin{center}
\resizebox{20cm}{!}{
\setlength\extrarowheight{7pt}
\begin{tabular}{ |c|c|c|c|c|c| } 
\hline
Double extension & Relations defining the double extension  & $\Sigma_{ij}$ & $M_{ij}$ & Data $\{\Sigma, M, P, Q\}$ & Conditions
\\
\hline
\multirow{3}{*}{$\mathbb{O}$} &   $ x_2x_1 = -x_1 x_2, \quad y_2y_1 = -y_1y_2$, & & & & \\ 
&  $y_1x_1 = x_1y_1+fx_2y_2,  \quad y_1x_2=-x_2y_1+x_1y_2$, & $\Sigma_{11}=\begin{bmatrix}
    1 & 0  \\ 0 & -1 
\end{bmatrix}, \quad \Sigma_{12}=\begin{bmatrix}
    0 & f  \\ 1 & 0 
\end{bmatrix}, \quad \Sigma_{21}=\begin{bmatrix}
    0 & f  \\ 1 & 0 
\end{bmatrix}, \quad \Sigma_{22}=\begin{bmatrix}
    -1 & 0  \\ 0 & 1 
\end{bmatrix}$ & $M_{11}=\begin{bmatrix}
    1 & 0  \\ 0 & -1 
\end{bmatrix}, \quad M_{12}=\begin{bmatrix}
    0 & f  \\ f & 0 
\end{bmatrix}, \quad M_{21}=\begin{bmatrix}
    0 & 1  \\ 1 & 0 
\end{bmatrix}, \quad M_{22}=\begin{bmatrix}
    -1 & 0  \\ 0 & 1 
\end{bmatrix}$  &  $\Sigma=\begin{bmatrix}
    1 & 0 & 0 & f \\ 0 & -1 & 1 & 0 \\ 0 & f & -1 & 0 \\ 1 & 0 & 0 & 1
\end{bmatrix}, \quad M=\begin{bmatrix}
    1 & 0 & 0 & f \\ 0 & -1 & f & 0 \\ 0 & 1 & -1 & 0 \\ 1 & 0 & 0 & 1
\end{bmatrix}, \quad P=(-1, 0), \quad Q=(-1, 0)$  & $f \not = -1$ \\ 
&  $y_2x_1 = fx_2y_1-x_1y_2, \quad y_2x_2=x_1y_1+x_2y_2$ &  &  & &   \\
\hline
\multirow{3}{*}{$\mathbb{P}$} &  $ x_2x_1 = -x_1 x_2, \quad y_2y_1 = -y_1y_2$, &  & & & \\ 
& $y_1x_1 = x_1y_2+fx_2y_2,  \quad y_1x_2=x_1y_2+x_2y_2$,  & $\Sigma_{11}=\begin{bmatrix}
    0 & 0  \\ 0 & 0 
\end{bmatrix}, \quad \Sigma_{12}=\begin{bmatrix}
    1 & f  \\ 1 & 1 
\end{bmatrix}, \quad \Sigma_{21}=\begin{bmatrix}
    1 & -f  \\ -1 & 1 
\end{bmatrix}, \quad \Sigma_{22}=\begin{bmatrix}
    0 & 0  \\ 0 & 0 
\end{bmatrix}$ & $M_{11}=\begin{bmatrix}
    0 & 1  \\ 1 & 0 
\end{bmatrix}, \quad M_{12}=\begin{bmatrix}
    0 & f  \\ -f & 0 
\end{bmatrix}, \quad M_{21}=\begin{bmatrix}
    0 & 1  \\ -1 & 0 
\end{bmatrix}, \quad M_{22}=\begin{bmatrix}
    0 & 1  \\ 1 & 0 
\end{bmatrix}$  &  $\Sigma=\begin{bmatrix}
    0 & 0 & 1 & f \\ 0 & 0 & 1 & 1 \\ 1 & -f & 0 & 0 \\ -1 & 1 & 0 & 0
\end{bmatrix}, \quad M=\begin{bmatrix}
    0 & 1 & 0 & f \\ 1 & 0 & -f & 0 \\ 0 & 1 & 0 & 1 \\ -1 & 0 & 1 & 0
\end{bmatrix}, \quad P=(-1, 0), \quad Q=(-1, 0)$  & $f \not = -1$ \\ 
&    $y_2x_1 = x_1y_1-fx_2y_1, \quad y_2x_2=-x_1y_1+x_2y_1$ &  & & & \\
\hline
\multirow{3}{*}{$\mathbb{Q}$} &  $x_2x_1 = -x_1 x_2,\quad y_2y_1 = -y_1y_2$, & & &  & \\ 
&  $y_1x_1 = x_1y_2,  \quad y_1x_2=x_1y_1+x_2y_1+x_1y_2$, & $\Sigma_{11}=\begin{bmatrix}
    0 & 0  \\ 1 & 1 
\end{bmatrix}, \quad \Sigma_{12}=\begin{bmatrix}
    1 & 0  \\ 1 & 0 
\end{bmatrix}, \quad \Sigma_{21}=\begin{bmatrix}
    -1 & 0  \\ 1 & 0 
\end{bmatrix}, \quad \Sigma_{22}=\begin{bmatrix}
    0 & 0  \\ -1 & 1 
\end{bmatrix}$ & $M_{11}=\begin{bmatrix}
    0 & 1  \\ -1 & 0 
\end{bmatrix}, \quad M_{12}=\begin{bmatrix}
    0 & 0  \\ 0 & 0 
\end{bmatrix}, \quad M_{21}=\begin{bmatrix}
    1 & 1  \\ 1 & -1 
\end{bmatrix}, \quad M_{22}=\begin{bmatrix}
    1 & 0  \\ 0 & 1 
\end{bmatrix}$  & $\Sigma=\begin{bmatrix}
    0 & 0 & 1 & 0 \\ 1 & 1 & 1 & 0 \\ -1 & 0 & 0 & 0 \\ 1 & 0 & -1 & 1
\end{bmatrix}, \quad M=\begin{bmatrix}
    0 & 1 & 0 & 0 \\ -1 & 0 & 0 & 0 \\ 1 & 1 & 1 & 0 \\ 1 & -1 & 0 & 1
\end{bmatrix}, \quad P=(-1, 0), \quad Q=(-1, 0)$  &  \\ 
&    $y_2x_1 = -x_1y_1, \quad y_2x_2=x_1y_1-x_1y_2+x_2y_2$ & & & & \\
\hline
\multirow{3}{*}{$\mathbb{R}$} &  $x_2x_1 = -x_1 x_2,\quad y_2y_1 = -y_1y_2$, & & &  & \\ 
&  $y_1x_1 = x_1y_1+x_2y_1+x_1y_2,  \quad y_1x_2=x_1y_2$, & $\Sigma_{11}=\begin{bmatrix}
    1 & 1  \\ 0 & 0 
\end{bmatrix}, \quad \Sigma_{12}=\begin{bmatrix}
    1 & 0  \\ 1 & 0 
\end{bmatrix}, \quad \Sigma_{21}=\begin{bmatrix}
    0 & 1  \\ 0 & -1 
\end{bmatrix}, \quad \Sigma_{22}=\begin{bmatrix}
    0 & 0  \\ -1 & 1 
\end{bmatrix}$ & $M_{11}=\begin{bmatrix}
    1 & 1  \\ 0 & 0 
\end{bmatrix}, \quad M_{12}=\begin{bmatrix}
    1 & 0  \\ 1 & 0 
\end{bmatrix}, \quad M_{21}=\begin{bmatrix}
    0 & 1  \\ 0 & -1 
\end{bmatrix}, \quad M_{22}=\begin{bmatrix}
    0 & 0  \\ -1 & 1 
\end{bmatrix}$ & $\Sigma=\begin{bmatrix}
    1 & 1 & 1 & 0 \\ 0 & 0 & 1 & 0 \\ 0 & 1 & 0 & 0 \\ 0 & -1 & -1 & 1
\end{bmatrix}, \quad M=\begin{bmatrix}
    1 & 1 & 1 & 0 \\ 0 & 0 & 1 & 0 \\ 0 & 1 & 0 & 0 \\ 0 & -1 & -1 & 1
\end{bmatrix}, \quad P=(-1, 0), \quad Q=(-1, 0)$  &  \\ 
&    $y_2x_1 = x_2y_1, \quad y_2x_2=-x_2y_1-x_1y_2+x_2y_2$ & & & & \\
\hline
\multirow{3}{*}{$\mathbb{S}$} &  $x_2x_1 = -x_1 x_2,\quad y_2y_1 = -y_1y_2$, & & &  & \\ 
& $y_1x_1 = -x_1y_1+x_2y_1+x_1y_2+x_2y_2,  \quad y_1x_2 = x_1y_1-x_2y_1+x_1y_2+x_2y_2$,  & $\Sigma_{11}=\begin{bmatrix}
    -1 & 1  \\ 1 & -1 
\end{bmatrix}, \quad \Sigma_{12}=\begin{bmatrix}
    1 & 1  \\ 1 & 1 
\end{bmatrix}, \quad \Sigma_{21}=\begin{bmatrix}
    1 & 1  \\ 1 & 1 
\end{bmatrix}, \quad \Sigma_{22}=\begin{bmatrix}
    -1 & 1  \\ 1 & -1 
\end{bmatrix}$ & $M_{11}=\begin{bmatrix}
    -1 & 1  \\ 1 & -1 
\end{bmatrix}, \quad M_{12}=\begin{bmatrix}
    1 & 1  \\ 1 & 1 
\end{bmatrix}, \quad M_{21}=\begin{bmatrix}
    1 & 1  \\ 1 & 1 
\end{bmatrix}, \quad M_{22}=\begin{bmatrix}
    -1 & 1  \\ 1 & -1 
\end{bmatrix}$  &  $\Sigma=\begin{bmatrix}
    -1 & 1 & 1 & 1 \\ 1 & -1 & 1 & 1 \\ 1 & 1 & -1 & 1 \\ 1 & 1 & 1 & -1
\end{bmatrix}, \quad M=\begin{bmatrix}
    -1 & 1 & 1 & 1 \\ 1 & -1 & 1 & 1 \\ 1 & 1 & -1 & 1 \\ 1 & 1 & 1 & -1
\end{bmatrix}, \quad P=(-1, 0), \quad Q=(-1, 0)$  &  \\ 
&    $y_2x_1 = x_1y_1+x_2y_1-x_1y_2+x_2y_2, \quad y_2x_2 =x_1y_1+x_2y_1+x_1y_2-x_2y_2$ & & & &  \\
\hline
\multirow{3}{*}{$\mathbb{T}$} &  $x_2x_1 = -x_1 x_2,\quad y_2y_1 = -y_1y_2$, & & &  & \\ 
& $y_1x_1 = -x_1y_1+x_2y_1+x_1y_2+x_2y_2,  \quad y_1x_2 = x_1y_1-x_2y_1+x_1y_2+x_2y_2$,  & $\Sigma_{11}=\begin{bmatrix}
    -1 & 1  \\ 1 & -1
\end{bmatrix}, \quad \Sigma_{12}=\begin{bmatrix}
    1 & 1  \\ 1 & 1 
\end{bmatrix}, \quad \Sigma_{21}=\begin{bmatrix}
    1 & 1  \\ 1 & 1 
\end{bmatrix}, \quad \Sigma_{22}=\begin{bmatrix}
    1 & -1  \\ -1 & 1 
\end{bmatrix}$ & $M_{11}=\begin{bmatrix}
    -1 & 1  \\ 1 & 1 
\end{bmatrix}, \quad M_{12}=\begin{bmatrix}
    1 & 1  \\ 1 & -1 
\end{bmatrix}, \quad M_{21}=\begin{bmatrix}
    1 & 1  \\ 1 & -1 
\end{bmatrix}, \quad M_{22}=\begin{bmatrix}
    -1 & 1  \\ 1 & 1 
\end{bmatrix}$  & $\Sigma=\begin{bmatrix}
    -1 & 1 & 1 & 1 \\ 1 & -1 & 1 & 1 \\ 1 & 1 & 1 & -1 \\ 1 & 1 & -1 & 1
\end{bmatrix}, \quad M=\begin{bmatrix}
    -1 & 1 & 1 & 1 \\ 1 & 1 & 1 & -1 \\ 1 & 1 & -1 & 1 \\ 1 & -1 & 1 & 1
\end{bmatrix}, \quad P=(-1, 0), \quad Q=(-1, 0)$  &  \\ 
&   $y_2x_1 = x_1y_1+x_2y_1+x_1y_2-x_2y_2, \quad y_2x_2 =x_1y_1+x_2y_1-x_1y_2+x_2y_2$ & &  & & \\
\hline
\end{tabular}
}
\end{center}
\end{table}
}}
\end{landscape}

\begin{landscape}
{\huge{
\begin{table}[h]
\caption{Double extensions}
\label{FourthTableDOE}
\begin{center}
\resizebox{20cm}{!}{
\setlength\extrarowheight{7pt}
\begin{tabular}{ |c|c|c|c|c|c| } 
\hline
Double extension & Relations defining the double extension  & $\Sigma_{ij}$ & $M_{ij}$ & Data $\{\Sigma, M, P, Q\}$ & Conditions
\\
\hline
\multirow{3}{*}{$\mathbb{U}$} &  $x_2x_1 = -x_1 x_2,\quad y_2y_1 = -y_1y_2$, &  & & & \\ 
&  $y_1x_1 = -x_1y_1+x_2y_1+x_1y_2+x_2y_2,  \quad y_1x_2 = x_1y_1+x_2y_1+x_1y_2-x_2y_2$, & $\Sigma_{11}=\begin{bmatrix}
    -1 & 1  \\ 1 & 1 
\end{bmatrix}, \quad \Sigma_{12}=\begin{bmatrix}
    1 & 1  \\ 1 & -1 
\end{bmatrix}, \quad \Sigma_{21}=\begin{bmatrix}
    1 & 1  \\ 1 & -1 
\end{bmatrix}, \quad \Sigma_{22}=\begin{bmatrix}
    -1 & 1  \\ 1 & 1 
\end{bmatrix}$ & $M_{11}=\begin{bmatrix}
    -1 & 1  \\ 1 & -1 
\end{bmatrix}, \quad M_{12}=\begin{bmatrix}
    1 & 1  \\ 1 & 1 
\end{bmatrix}, \quad M_{21}=\begin{bmatrix}
    1 & 1  \\ 1 & 1 
\end{bmatrix}, \quad M_{22}=\begin{bmatrix}
    1 & -1  \\ -1 & 1 
\end{bmatrix}$   & $\Sigma=\begin{bmatrix}
    -1 & 1 & 1 & 1 \\ 1 & 1 & 1 & -1 \\ 1 & 1 & -1 & 1 \\ 1 & -1 & 1 & 1
\end{bmatrix}, \quad M=\begin{bmatrix}
    -1 & 1 & 1 & 1 \\ 1 & -1 & 1 & 1 \\ 1 & 1 & 1 & -1 \\ 1 & 1 & -1 & 1
\end{bmatrix}, \quad P=(-1, 0), \quad Q=(-1, 0)$  &  \\ 
&    $y_2x_1 = x_1y_1+x_2y_1-x_1y_2+x_2y_2, \quad y_2x_2 =x_1y_1-x_2y_1+x_1y_2+x_2y_2$ & & & &  \\
\hline
\multirow{3}{*}{$\mathbb{V}$} & $x_2x_1 = x_1 x_2,\quad y_2y_1 = -y_1y_2$, & & &  &  \\ 
& $y_1x_1 = x_2y_1+x_1y_2,  \quad y_1x_2= x_2y_1$,  & $\Sigma_{11}=\begin{bmatrix}
    0 & 1  \\ 0 & 1 
\end{bmatrix}, \quad \Sigma_{12}=\begin{bmatrix}
    1 & 0  \\ 0 & 0 
\end{bmatrix}, \quad \Sigma_{21}=\begin{bmatrix}
    -1 & 1  \\ 0 & 0 
\end{bmatrix}, \quad \Sigma_{22}=\begin{bmatrix}
    0 & 0  \\ 0 & 1 
\end{bmatrix}$ & $M_{11}=\begin{bmatrix}
    0 & 1  \\ -1 & 0 
\end{bmatrix}, \quad M_{12}=\begin{bmatrix}
    1 & 0  \\ 1 & 0 
\end{bmatrix}, \quad M_{21}=\begin{bmatrix}
    0 & 0  \\ 0 & 0 
\end{bmatrix}, \quad M_{22}=\begin{bmatrix}
    1 & 0  \\ 0 & 1 
\end{bmatrix}$  & $\Sigma=\begin{bmatrix}
    0 & 1 & 1 & 0 \\ 0 & 1 & 0 & 0 \\ -1 & 1 & 0 & 0 \\ 0 & 0 & 0 & 1
\end{bmatrix}, \quad M=\begin{bmatrix}
    0 & 1 & 1 & 0 \\ -1 & 0 & 1 & 0 \\ 0 & 0 & 1 & 0 \\ 0 & 0 & 0 & 1
\end{bmatrix}, \quad P=(-1, 0), \quad Q=(1, 0)$  &  \\ 
&    $y_2x_1 = -x_1y_1+x_2y_1, \quad y_2x_2=x_2y_2$ & & & & \\
\hline
\multirow{3}{*}{$\mathbb{W}$} &  $x_2x_1 = x_1 x_2,\quad y_2y_1 = -y_1y_2$, & & &  & \\ 
&  $y_1x_1 = fx_2y_1+x_1y_2,  \quad y_1x_2= x_1y_1-x_2y_2$, & $\Sigma_{11}=\begin{bmatrix}
    0 & f  \\ 1 & 0 
\end{bmatrix}, \quad \Sigma_{12}=\begin{bmatrix}
    1 & 0  \\ 0 & -1 
\end{bmatrix}, \quad \Sigma_{21}=\begin{bmatrix}
    1 & 0  \\ 0 & -1 
\end{bmatrix}, \quad \Sigma_{22}=\begin{bmatrix}
    0 & f  \\ 1 & 0 
\end{bmatrix}$ & $M_{11}=\begin{bmatrix}
    0 & 1  \\ 1 & 0 
\end{bmatrix}, \quad M_{12}=\begin{bmatrix}
    f & 0  \\ 0 & f 
\end{bmatrix}, \quad M_{21}=\begin{bmatrix}
    1 & 0  \\ 0 & 1 
\end{bmatrix}, \quad M_{22}=\begin{bmatrix}
    0 & -1  \\ -1 & 0 
\end{bmatrix}$ &  $\Sigma=\begin{bmatrix}
    0 & f & 1 & 0 \\ 1 & 0 & 0 & -1 \\ 1 & 0 & 0 & f \\ 0 & -1 & 1 & 0
\end{bmatrix}, \quad M=\begin{bmatrix}
    0 & 1 & f & 0 \\ 1 & 0 & 0 & f \\ 1 & 0 & 0 & -1 \\ 0 & 1 & -1 & 0
\end{bmatrix}, \quad P=(-1, 0), \quad Q=(1, 0)$  & $f \not = -1$ \\ 
&    $y_2x_1 = x_1y_1+fx_2y_2, \quad y_2x_2=-x_2y_1+x_1y_2$ & & & & \\
\hline
\multirow{3}{*}{$\mathbb{X}$} &   $x_2x_1 = x_1 x_2,\quad y_2y_1 = -y_1y_2$, & & & & \\ 
&  $y_1x_1 = x_1y_2,  \quad y_1x_2= x_1y_2+x_2y_2$, & $\Sigma_{11}=\begin{bmatrix}
    0 & 0  \\ 0 & 0 
\end{bmatrix}, \quad \Sigma_{12}=\begin{bmatrix}
    1 & 0  \\ 1 & 1 
\end{bmatrix}, \quad \Sigma_{21}=\begin{bmatrix}
    1 & 0  \\ 1 & 1 
\end{bmatrix}, \quad \Sigma_{22}=\begin{bmatrix}
    0 & 0  \\ 0 & 0 
\end{bmatrix}$ & $M_{11}=\begin{bmatrix}
    0 & 1  \\ 1 & 0 
\end{bmatrix}, \quad M_{12}=\begin{bmatrix}
    0 & 0  \\ 0 & 0 
\end{bmatrix}, \quad M_{21}=\begin{bmatrix}
    1 & 1  \\ 1 & 0 
\end{bmatrix}, \quad M_{22}=\begin{bmatrix}
    0 & 1  \\ 1 & 0 
\end{bmatrix}$ & $\Sigma=\begin{bmatrix}
    0 & 0 & 1 & 0 \\ 0 & 0 & 1 & 1 \\ 1 & 0 & 0 & 0 \\ 1 & 1 & 0 & 0
\end{bmatrix}, \quad M=\begin{bmatrix}
    0 & 1 & 0 & 0 \\ 1 & 0 & 0 & 0 \\ 1 & 1 & 0 & 1 \\ 1 & 0 & 1 & 0
\end{bmatrix}, \quad P=(-1, 0), \quad Q=(1, 0)$  &  \\ 
&    $y_2x_1 = x_1y_1, \quad y_2x_2=x_1y_1+x_2y_1$ & & & & \\
\hline
\multirow{3}{*}{$\mathbb{Y}$} &   $x_2x_1 = x_1 x_2,\quad y_2y_1 = -y_1y_2$, & & & & \\ 
& $y_1x_1 = x_1y_1,  \quad y_1x_2= fx_1y_1-x_2y_1+x_1y_2$,  & $\Sigma_{11}=\begin{bmatrix}
    1 & 0  \\ f & -1 
\end{bmatrix}, \quad \Sigma_{12}=\begin{bmatrix}
    0 & 0  \\ 1 & 0 
\end{bmatrix}, \quad \Sigma_{21}=\begin{bmatrix}
    0 & 0  \\ 1 & 0 
\end{bmatrix}, \quad \Sigma_{22}=\begin{bmatrix}
    1 & 0  \\ f & -1 
\end{bmatrix}$ & $M_{11}=\begin{bmatrix}
    1 & 0  \\ 0 & 1 
\end{bmatrix}, \quad M_{12}=\begin{bmatrix}
    0 & 0  \\ 0 & 0 
\end{bmatrix}, \quad M_{21}=\begin{bmatrix}
    f & 1  \\ 1 & f 
\end{bmatrix}, \quad M_{22}=\begin{bmatrix}
    -1 & 0  \\ 0 & -1 
\end{bmatrix}$  & $\Sigma=\begin{bmatrix}
    1 & 0 & 0 & 0 \\ f & -1 & 1 & 0 \\ 0 & 0 & 1 & 0 \\ 1 & 0 & f & -1
\end{bmatrix}, \quad M=\begin{bmatrix}
    1 & 0 & 0 & 0 \\ 0 & 1 & 0 & 0 \\ f & 1 & -1 & 0 \\ 1 & f & 0 & -1
\end{bmatrix}, \quad P=(-1, 0), \quad Q=(1, 0)$  & $f$ is general \\ 
&    $ y_2x_1 = x_1y_2, \quad y_2x_2=x_1y_1+fx_1y_2-x_2y_2$ & & & & \\
\hline
\multirow{3}{*}{$\mathbb{Z}$} &  $x_2x_1 = -x_1 x_2,\quad y_2y_1 = y_1y_2$, & & &  & \\ 
& $y_1x_1 = x_1y_1+x_2y_2,  \quad y_1x_2 = x_2y_1+x_1y_2$,  & $\Sigma_{11}=\begin{bmatrix}
    1 & 0  \\ 0 & 1 
\end{bmatrix}, \quad \Sigma_{12}=\begin{bmatrix}
    0 & 1  \\ 1 & 0
\end{bmatrix}, \quad \Sigma_{21}=\begin{bmatrix}
    0 & f  \\ f & 0 
\end{bmatrix}, \quad \Sigma_{22}=\begin{bmatrix}
    -1 & 0  \\ 0 & -1 
\end{bmatrix}$ & $M_{11}=\begin{bmatrix}
    1 & 0  \\ 0 & -1 
\end{bmatrix}, \quad M_{12}=\begin{bmatrix}
    0 & 1  \\ f & 0 
\end{bmatrix}, \quad M_{21}=\begin{bmatrix}
    0 & 1  \\ f & 0 
\end{bmatrix}, \quad M_{22}=\begin{bmatrix}
    1 & 0  \\ 0 & -1 
\end{bmatrix}$  & $\Sigma=\begin{bmatrix}
    1 & 0 & 0 & 1 \\ 0 & 1 & 1 & 0 \\ 0 & f & -1 & 0 \\ f & 0 & 0 & -1
\end{bmatrix}, \quad M=\begin{bmatrix}
    1 & 0 & 0 & 1 \\ 0 & -1 & f & 0 \\ 0 & 1 & 1 & 0 \\ f & 0 & 0 & -1
\end{bmatrix}, \quad P=(1, 0), \quad Q=(-1, 0)$  & $f(1+f)\not = 0$ is general \\ 
&    $y_2x_1 = fx_2y_1-x_1y_2, \quad y_2x_2=fx_1y_1-x_2y_2$ & & & & \\
\hline
\end{tabular}
}
\end{center}
\end{table}
}}

\begin{remark} Zhang and Zhang \cite[Subcase 4.4.4]{ZhangZhang2009} formulated the relations of the algebra $\mathbb{Z}$. However, the relations shown contain some typos since the coefficients of the relations do not match the entries of the matrix $\Sigma$. Our version of these relations are presented in Table \ref{FourthTableDOE}. They wrote the relations $y_1x_1 = x_1y_2+x_2y_2$ and $y_2x_2=fx_1y_2-x_2y_2$, and the typo concerns that they considered the first factor $x_1y_2$ when it should be $x_1y_1$ according to matrix $\Sigma$.
\end{remark}
\end{landscape}

\end{document}